\setlist[enumerate]{itemsep=0mm}
\def\titlerunning#1{\gdef\titrun{#1}}
\def\author#1{\gdef\autrun{\def\and{\unskip, }#1}\gdef\@author{#1}}
\def\address#1{{\def\and{\\\hspace*{18pt}}\renewcommand{\thefootnote}{}%
		\footnote {#1}}%
	\markboth{\autrun}{\titrun}}
\def\email#1{e-mail: #1}
\def\keywords#1{\par\medskip
	\noindent\textbf{Keywords.} #1}
\def\MSC#1{\par\medskip
	\noindent\textbf{MSC.} #1}
\definecolor{persianblue}{rgb}{0.11, 0.22, 0.73}
\definecolor{persiangreen}{rgb}{0.0, 0.65, 0.58}
\Crefname{theorem}{Theorem}{Theorems}
\Crefname{lemma}{Lemma}{Lemmas}
\crefname{claim}{Claim}{Claims}
\newtheorem{theorem}{Theorem}[section]
\newtheorem*{theorem*}{Theorem}
\newtheorem*{mainthm*}{Main Theorem}
\newtheorem{lemma}[theorem]{Lemma}
\newtheorem{proposition}[theorem]{Proposition}
\newtheorem{corollary}[theorem]{Corollary}
\newtheorem{problem*}{Problem}[]
\newtheorem{fact}[theorem]{Fact}
\newtheorem{definition}[theorem]{Definition}
\newtheorem{notation}[theorem]{Notation}
\newtheorem{claim}[theorem]{Claim}
\numberwithin{equation}{section}
\newtheorem{convention}[theorem]{Convention}
\newcommand{\dom}[1]{{\rm dom}(#1)}
\newcommand{\rest}{\! \upharpoonright \!}
\newcommand{\forces}{\Vdash}
\newtheorem{imp-remark}[theorem]{\textbf{Important Remark}}
\numberwithin{equation}{section}
\theoremstyle{remark}
\newtheorem{remark}[theorem]{Remark}
\let\qed@empty\openbox 
\def\@begintheorem#1#2[#3]{%
	\deferred@thm@head{%
		\the\thm@headfont\thm@indent
		\@ifempty{#1}
		{\let\thmname\@gobble}
		{\let\thmname\@iden}%
		\@ifempty{#2}
		{\let\thmnumber\@gobble\global\let\qed@current\qed@empty}
		{\let\thmnumber\@iden\xdef\qed@current{#2}}%
		\@ifempty{#3}
		{\let\thmnote\@gobble}
		{\let\thmnote\@iden}%
		\thm@swap\swappedhead
		\thmhead{#1}{#2}{#3}%
		\the\thm@headpunct\thmheadnl\hskip\thm@headsep
	}\ignorespaces
}
\renewcommand{\qedsymbol}{%
	\ifx\qed@thiscurrent\qed@empty
	\qed@empty
	\else
	\fbox{\scriptsize\qed@thiscurrent}%
	\fi
}
\renewcommand{\proofname}{%
	Proof%
	\ifx\qed@thiscurrent\qed@empty
	\else
	\ of \qed@thiscurrent
	\fi
}
\xpretocmd{\proof}{\let\qed@thiscurrent\qed@current}{}{}
\newenvironment{proof*}[1]
{\def\qed@thiscurrent{\ref{#1}}\proof}
{\endproof}
\let\blx@rerun@biber\relax
\begin{document}
	
	
	\baselineskip=17pt
	
	

	\titlerunning{Specialising Trees With Small Approximations I}
	\title{Specialising Trees With Small Approximations I}

	\author{Rahman Mohammadpour}
	
	\date{}
	
	\maketitle
	
	\address{Rahman Mohammadpour: Institut für Diskrete Mathematik und Geometrie, TU Wien,
		1040 Vienna, Austria. \email{rahmanmohammadpour@gmail.com}}


\begin{abstract}
Assuming $\rm PFA$,  we shall use internally club $\omega_1$-guessing models as side conditions to show that for every tree $T$  of height $\omega_2$ without cofinal branches,  there is a proper and $\aleph_2$-preserving forcing notion with finite conditions which specialises $T$. Moreover, the forcing has the $\omega_1$-approximation property.

 \keywords{guessing models, PFA, side condition, trees, special trees, specialisation}
 \MSC{03E05, 03E35, 03E57} 
\end{abstract}


\section{Introduction}

By the well-known work of Baumgartner, Malitz and Reinhardt \cite{BaumMalRei},  under Martin's axiom at $\aleph_1$, all trees of height and size $\omega_1$ without cofinal branches are special. Unfortunately, the straightforward generalisations of $\rm MA$ were not thus far capable of specialising $\omega_2$-Aronszajn trees, see \cite{Baumgartnersurvey,Shelah-wMA,Shelah-Stanley-GenMA}. The different behaviour of the specialising problem  beyond $\omega_1$ arises from two interconnected factors:  the weakness of the current technology of forcing iterations and the nature of trees of height at least $\omega_2$.
Thus, the question of finding a legitimate higher version of Martin's axiom, under which every $\omega_2$-Aronszajn tree is special seems challenging (we will say  more about this.)
However, there are still many intriguing results in this research direction. For example, Laver and Shelah \cite{LavShe} showed, assuming the consistency of a weakly compact cardinal, that the $\omega_2$-Suslin Hypothesis is consistent with the Continuum Hypothesis (in fact, they showed that it is consistent that there are $\omega_2$-Aronszajn trees and all of them are special.) This result was extended by  Golshani and Hayut in \cite{GolHay}, where they proved that, modulo the consistency of large cardinals,  it is consistent that for every regular cardinal $\kappa$, there are $\kappa^{+}$-Aronszajn trees and all of them are special. A more relevant result, where wide trees were involved, was obtained by  Golshani and Shelah in \cite{GolShe}, that is for a prescribed regular cardinal $\kappa$, it is consistent that every tree of height and size $\kappa^+$ (with a small number of branches),  is weakly special (i.e., there is a colouring with $\kappa$ colours so that if $s<t,u$ have the same colour, then $t$ and $u$ are comparable.)
The affinity between these and other similar results is that they rely upon the original technique of Laver and Shelah \cite{LavShe}. Although the main difficulty in proving an iteration theorem for countably closed and $\aleph_2$-c.c forcings is the preservation of  $\aleph_2$,  it was  surmountable by Laver-Shelah's argument due to the particular features of the iterands.
The attempts to overcome the difficulty and  find a higher analogue of $\rm MA$ have been  generally devoted to countably closed forcings until Neeman's discovery \cite{NE2014}  of generalised side conditions. His technology allows us to examine the connection between the specialisation problem and generalised forms of Martin's axiom, and ask if we still need to consider countably closed forcings in this context.
If the consistency of a higher analogue of $\rm PFA$ is achievable,  it is then natural to speculate whether such a forcing axiom can imply that all trees in an appropriate subclass of trees of height and size $\omega_2$ are special. 
As an early application of his method, Neeman \cite{Neeman2017} attempted to (partially) specialise trees of height $\omega_2$ with finite conditions. To achieve this, he attaches the partial specialising  
functions to the sequences of models as side conditions. He then demonstrates that the resulting construction belongs to an iterable class which also includes a forcing notion for adding a nonspecial $\omega_2$-Aronszajn tree.

The second factor mentioned above may also  lead one to recast the program of finding a generalised $\rm MA$ for the problem of special $\omega_2$-Aronszajn trees, as such trees intrinsically involve a particular compactness phenomenon.
One can use some forms of the square principle to construct trees without cofinal branches that cannot be special, even in transitive outer  models with the same cardinals.  The basic idea goes back to Laver (see \cite{ShelahStanley}) who isolated  the concept of an ascending path through a tree and showed that an $\omega_2$-Aronszajn tree with an ascending path is non-special even in any transitive outer model that computes the relevant cardinals correctly. However, the earliest example of a non-special $\omega_2$-Aronszajn tree was constructed by Baumgartner using $\square_{\omega_1}$, which was also independently discovered and generalised by Shelah and Stanley \cite{ShelahStanley}.
They showed that $\square_{\lambda}$ implies the existence of non-specialisable $\lambda^+$-Aronszajn trees. The connection between  square-like  principles and ascending paths through trees or tree-like systems has been studied by several people, just to mention a few:
Baumgartner (as mentioned above),
Brodsky and Rinot \cite{BrodskyRinot},
Devlin \cite{Devlin83},
Cummings \cite{Cummings97},
 Lambie-Hanson\cite{Hanson-Sq-Narrow}, Lamibie-Hanson and  L\"{u}cke \cite{, LambieLucke}, 
 Laver and Shelah \cite{LavShe},
 L\"{u}cke \cite{Lucke},
 Neeman \cite{Neeman2017},
Shelah and Stanley  \cite{ShelahStanley},
Todorčević \cite{Todorcevic-SpecialSquare}.

To see why specialising a tree of height beyond $\omega_1$ is subtly different from that of a tree of height $\omega_1$, let us first recall that the standard forcing to specialises a tree $T$ of height $\kappa^+$ uses partial specialising functions of size less than $\kappa$, and let us denote this forcing by $\mathbb S_{\kappa}(T)$. For a cardinal $\lambda\leq\kappa$, $\mathbb S_\lambda(T)$ is defined naturally.  L\"{u}cke \cite{Lucke} studied the chain condition of $\mathbb S_\lambda(T)$,  and complete the bridge between the notion of an ascending path and the chain condition of  $\mathbb S_\lambda(T)$. Under some cardinal arithmetic assumptions, he showed that the nonexistence of a \emph{weak form of ascending paths}\footnote{ See \cite{Lucke} for the definition.}  of width less than $\lambda$ through $T$ is equivalent to the $\kappa^+$-chain condition of $\mathbb S_{\lambda}(T)$.
Note that it is easily seen that $\mathbb S_{\lambda}(T)$ collapses $\kappa^+$ if $T$ has a cofinal branch.
Observe that also by Baumgartner--Malitz--Reinhardt \cite{BaumMalRei}, if $T$ is of height $\omega_1$ without cofinal branches, then  $\mathbb S_{\omega}(T)$ has the countable chain condition, as the existence of a cofinal branch through such tree is equivalent to the existence of a (weak) ascending path of finite length.  
It is also not hard to see that if $\kappa=\omega_1$ and the $\rm CH$ fails, then $\mathbb S_{\omega_1}(T)$ collapses the continuum onto $\omega_1$. Thus not only the  $\rm CH$ is necessary for preserving $\aleph_2$, but  also by L\"{u}cke's result, the lack of cofinal branches through $T$ is not enough to ensure that $\mathbb S_{\omega_1}(T)$ preserves $\aleph_2$. On the other hand, if $T$ is of height $\omega_2$ and has no cofinal branches, then $\mathbb S_{\omega}(T)$ has the $\aleph_2$-chain condition, but then the question is how to preserve $\omega_1$?

Therefore,  the behaviour of the continuum function and the existence of ascending paths of width $\omega$ can prevent us from specialising trees of height $\omega_2$ merely with countable conditions. L\"{u}cke \cite{Lucke} asked  the following questions:
\begin{enumerate}
  
    \item Assume $\rm PFA$. Is  every tree of height $\omega_2$ without cofinal branches specialisable?
     \item If $T$ is a tree of height $\kappa^+$, for an uncountable regular cardinal $\kappa$ without ascending paths of width less than $\kappa$, is then $T$ specialisable?
\end{enumerate}

Let us end our discussion with a couple of  general questions: Do we still need to consider the specialisation of all $\omega_2$-Aronszajn trees in the context of generalised Martin's axiom? If looking for a generalised $\rm MA$, do we want to have some kinds of compactness at $\aleph_2$ or not?

In this paper, we prove the following theorem.
\begin{theorem*}
Assume $\rm PFA$. Every tree of height $\omega_2$  without cofinal branches is specialisable via a proper and $\aleph_2$-preserving forcing with finite conditions. Moreover, the forcing has the $\omega_1$-approximation property. 
\end{theorem*}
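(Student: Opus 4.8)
The plan is to define a finite-condition forcing $\mathbb{P}=\mathbb{P}(T)$ whose conditions are pairs $p=(f_p,\mathcal{M}_p)$: here $f_p$ is a finite partial function from $T$ into $\omega_1$ that is injective on chains (equivalently, each colour class is an antichain), and $\mathcal{M}_p$ is a finite side condition — an $\in$-coherent finite set of elementary submodels of a fixed large $H_\theta$ of two sorts, namely countable ones and internally club $\omega_1$-guessing models of size $\aleph_1$ — closed under pairwise intersections, with each $M\in\mathcal{M}_p$ containing the part of $p$ it ``sees'', and with the two sorts interacting coherently (the countable models arising inside the internally club filtrations of the larger ones). The order is coordinatewise $\supseteq$ together with the usual side-condition refinement constraint. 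Under $\PFA$ there are stationarily many internally club $\omega_1$-guessing models and club-many suitable countable models, so the pool of admissible side conditions is rich, and $\PFA$ also gives $|T|\le\aleph_2$. Granting that $\{p : t\in\dom{f_p}\}$ is dense for each $t\in T$ — extend $f_p$ by colouring $t$ with a colour distinct from the finitely many already used on comparable nodes of $\dom{f_p}$, chosen inside every side-condition model containing $t$ — the generic union $F=\bigcup\{f_p : p\in G\}$ is a total, chain-injective colouring $T\to\omega_1$, i.e.\ it specialises $T$. Note where the hypothesis is forced on us: a chain of size $\aleph_2$ cannot be injected into $\omega_1$, so this scheme can only preserve $\aleph_2$ when $T$ has no cofinal branch.

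First I would prove the routine structural lemmas: $\mathbb{P}$ is nontrivial; for $M\in\mathcal{M}_p$ the restriction $p\restriction M := (f_p\cap M,\ \mathcal{M}_p\cap M)$ is again a condition lying in $M$; and $\mathbb{P}$ is closed under the natural amalgamations of coherent conditions. Next, properness: for a suitable countable $M\prec H_\theta$ with $T,\mathbb{P},p\in M$ and $p\in\mathbb{P}\cap M$, the condition $p+M := (f_p,\ \mathcal{M}_p\cup\{M\cap H_{\omega_2}\})$ extends $p$ and is $(M,\mathbb{P})$-generic. The mechanism is an amalgamation lemma: if $q\le p+M$ and $D\in M$ is dense, then $q\restriction M\in M$ can be extended inside $M$ into $D$ to some $r\in D\cap M$, and $r$ and $q$ admit a common extension $r\wedge q$ — the colourings do not clash because $f_r$ only adds colours to nodes inside $M$ while the new part of $f_q$ lies above or beside $M$ in a controlled way. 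Hence $\mathbb{P}$ is proper, so it preserves $\aleph_1$ and has the $\omega$-covering property.

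The heart of the proof is the analogue of that amalgamation lemma for the internally club $\omega_1$-guessing models. For a suitable such $W$ with the relevant data in $W$ and $p\in\mathbb{P}\cap W$, the condition $p+W := (f_p,\ \mathcal{M}_p\cup\{W\})$ should extend $p$ and enjoy \emph{strong $W$-genericity}: for every $q\le p+W$ and every dense $D\in W$ there is $q'\le q$ with $q'\le r$ for some $r\in D\cap W$. This is where the $\omega_1$-guessing property and internal clubness of $W$ are used essentially: when amalgamating $q$ with a candidate $r\in D\cap W$ extending $q\restriction W$, the sole obstruction is a colour clash between some $s\in\dom{f_r}$ (so $s\in W$) and some $t\in\dom{f_q}\setminus W$ with $s,t$ comparable. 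The downward trace $\{u\in T : u<_T t\}$ is a chain of $T$, hence — because $T$ has no cofinal branch — bounded, and it is an $\omega_1$-approximated subset of $T$; so the guessing property of $W$ returns an element of $W$ capturing its intersection with $W$, and internal clubness lets the countable-model part of the argument be run inside $W$. Consequently the finitely many ``external'' nodes of $f_q$ can always be guessed around, $r\in D\cap W$ can be chosen avoiding every clash, and $q'=r\wedge q$ works. I expect this lemma, together with the bookkeeping reconciling the countable and the $\aleph_1$-sized side conditions inside it, to be the main obstacle.

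Finally, strong $W$-genericity yields the two remaining properties. For $\aleph_2$-preservation: were $\aleph_2$ collapsed, properness and $\omega$-covering would give $p$ and a name $\dot f$ with $p\Vdash$ ``$\dot f\colon\omega_1\to\omega_2$ is cofinal''; take a suitable $W\ni p,\dot f,T,\mathbb{P}$ with $\delta := W\cap\omega_2 < \omega_2$, and apply strong $W$-genericity to the dense sets $D_\xi = \{r : r\text{ decides }\dot f(\xi)\}\in W$ to conclude, via elementarity of $W$, that $p+W\Vdash \dot f(\xi)\in W\cap\omega_2 = \delta$ for every $\xi<\omega_1$; thus $p+W\Vdash\ran(\dot f)\subseteq\delta$, contradicting cofinality. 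For the $\omega_1$-approximation property: given a name $\dot A$ for a subset of an ordinal forced to be $\omega_1$-approximated by $V$, and $p\in\mathbb{P}$, pick a suitable $W$ containing the data; strong $W$-genericity shows that for each countable $z\in W$ the value $\dot A\cap z$ is forced into $W$, so in $V$ the map $z\mapsto\dot A\cap z$ is $W$-approximated, whence the $\omega_1$-guessing property of $W$ delivers a single $\tilde A\in W\cap V$ with $p+W\Vdash \dot A\cap W=\tilde A\cap W$; a coordinatewise density argument, again using strong $W$-genericity, then upgrades this to $p+W\Vdash \dot A=\tilde A$. Together with properness this establishes all clauses of the theorem.
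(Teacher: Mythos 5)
Your overall architecture is the paper's: finite specialising functions attached to Veli\v{c}kovi\'c--Neeman side conditions with countable models and internally club $\omega_1$-guessing models, density of $\{p: t\in\dom{f_p}\}$, genericity for both types of models. But the proposal leaves the central difficulty unaddressed and, at one point, inverts where the guessing property is actually needed. The hard case is \emph{properness for the countable models}, not the amalgamation over the $\aleph_1$-sized $W$. If $M$ is countable, $t\in\dom(f_q)\setminus M$, and $f_q(t)<M\cap\omega_1$ (so the colour is automatically ``visible'' to $M$), a condition $r\in D\cap M$ may put a node $s\in M$ with $s<_T t$ and $f_r(s)=f_q(t)$, because the branch below $t$ can meet $M$ cofinally often below $\eta_M(t)$ while $r$ cannot see $t$. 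Saying the new part of $f_q$ ``lies above or beside $M$ in a controlled way'' is precisely what has to be proved, and it is false without an extra clause in the definition of the forcing: the paper requires of every condition that if $f_p(t)\in M$ for a countable side-condition model $M$ and $t$ lies on a branch belonging to $M$, then $t\in M$ (Neeman's overlapping constraint). Your conditions do not include this, and without it no reflection argument closes the gap. Even with it, the residual case where the branch $b_M(t)$ is \emph{not} guessed in $M$ and $\eta_M(t)\notin M$ cannot be handled by reflecting into a dense set alone; the paper needs a diagonalization (Lemma 2.14 plus the ``$T$-assignment'' machinery in Proposition 4.29) exploiting that the characteristic function of $b_M(t)$ is not guessed in $M$ to find conditions in $D\cap M$ whose new nodes avoid $b_M(t)$ entirely. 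By contrast, for the $\aleph_1$-sized $W$ the relevant branches \emph{are} guessed (that is what $\omega_1$-guessing plus internal clubness buys: $O_W(t)\in W$ when $\cof(\eta_W(t))=\omega_1$), so that side is the easier one; your sketch puts the guessing property to work in the easy place and asserts the hard place away.

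The second gap is the $\omega_1$-approximation property. Your derivation assumes that ordinary $(W,\mathbb P)$-genericity (which is what your ``strong $W$-genericity'' actually states) forces $\dot A\cap z$ into $W$ for countable $z\in W$, and that $W$'s guessing property can then be applied to $\dot A\cap W$. Neither step follows: genericity does not prevent new $\omega_1$-approximated subsets of $W$ from appearing, $\mathcal P(z)\cap V\not\subseteq W$ when the continuum exceeds $\aleph_1$, and the forcing is not strongly generic for these models in the technical sense (not every $r\in W$ extending $q\rest_W$ is compatible with $q$, because of the colour-clash constraints), so the Mitchell-style quotient argument is unavailable. The paper instead proves approximation by a second run of the countable-model diagonalization: if $g=\dot f\rest_{M^*}$ is not guessed in the countable $M^*$, Lemma 2.14 produces cofinally many $x$ with $g_x\nsubseteq g$, contradicting the compatibility obtained from the properness machinery. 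You would need either that argument or a genuinely new one; the route through the $\aleph_1$-sized guessing models does not close.
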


This theorem  answers L\"{u}cke's first question in the affirmative.\footnote{To be precise,  L\"{u}cke's definition of the specialisability of a tree $T$ requires preservation of all cardinal up to the size of $T$, however in our theorem the size of $T$ will be collapsed to $\aleph_2$.}
Given a tree $T$ of height $\omega_2$ with no cofinal branches,
we shall use internally club $\omega_1$-guessing models to construct a proper forcing notion $\mathbb P_T$ similar to Neeman's in \cite{Neeman2017},
so that forcing with $\mathbb P_T$ specialises $T$.
Notice that the existence of sufficiently many $\omega_1$-guessing models of size $\omega_1$ implies the failure of certain versions of the square principle.  It is also worth mentioning that by an observation due to L\"{u}cke,  the existence of sufficiently many $\omega_1$-guessing models of size $\aleph_1$ (and hence under $\rm PFA$) no tree of height $\omega_2$ without cofinal branches contains an ascending path of width $\omega$. Interestingly, we will not use this fact, as the presence of guessing models in our side conditions suffices. By a theorem due to Viale and Wei\ss~\cite{VW2011}, under $\rm PFA$, there are stationarily many internally club guessing models, and by a theorem due to Cox and Krueger~\cite{CK-Quotient}, this consequence of $\rm PFA$  is consistent with arbitrarily large continuum. Thus essentially, the fact that $2^{\aleph_0}=\aleph_2$ holds under $\rm PFA$ does not play a role in our result and proofs.

We shall also answer the second question above consistently in the affirmative, for trees of height $\kappa^{++}$  without cofinal branches, in our forthcoming paper \cite{M-in-prep}, which in particular includes a proof of the following theorem.

\begin{theorem*}[\cite{M-in-prep}]
Assume  $\kappa$ is a regular cardinal, and that $\lambda>\kappa$ is a supercompact cardinal.
Then in generic extensions by some $<\!\kappa$-closed forcing notion, 
$\kappa^{<\kappa}=\kappa$, $2^{\kappa}=\lambda=\kappa^{++}$ and every tree of height $\kappa^{++}$ without cofinal branches is specialisable via some $<\!\kappa$-closed forcing which preserves $\kappa^+$ and $\kappa^{++}$.
\end{theorem*}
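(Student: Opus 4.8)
The plan is to separate the result into a \emph{preparation} forcing that manufactures enough higher guessing models from the supercompact $\lambda$, followed by a per-tree \emph{specialising} forcing that is the $\kappa$-analogue of the poset $\mathbb{P}_T$ from the Main Theorem, with side conditions drawn from those guessing models and with approximations of size $<\kappa$ in place of finite ones. Since $\PFA$ is not available to hand us guessing models for free at the level of $\kappa$, the role played by the Viale--Wei\ss~\cite{VW2011} and Cox--Krueger~\cite{CK-Quotient} theorems in the Main Theorem must here be played by a ground-model large cardinal, which is why the statement is a consistency result rather than a theorem from an axiom.

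First I would fix the preparation. Starting from $\kappa$ regular with $\kappa^{<\kappa}=\kappa$ and $\lambda>\kappa$ supercompact, I would Laver-prepare $\lambda$ so that its supercompactness becomes indestructible under $<\!\lambda$-directed-closed forcing, and then force with the $\kappa$-Mitchell forcing $\mathbb{M}=\mathbb{M}_{\kappa,\lambda}$, the natural $<\!\kappa$-closed hybrid of $\mathrm{Add}(\kappa,\lambda)$ with term-forcing for the $\mathrm{Coll}(\kappa^+,<\!\lambda)$-style collapses. This forcing is $<\!\kappa$-closed, so it adds no $<\!\kappa$-sequences and hence preserves $\kappa$ and keeps $\kappa^{<\kappa}=\kappa$; it collapses every cardinal strictly between $\kappa^+$ and $\lambda$ to $\kappa^+$, making $\lambda=\kappa^{++}$; and the $\mathrm{Add}(\kappa,\lambda)$ factor forces $2^{\kappa}=\lambda$. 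The crucial output, obtained by the standard reflection argument using a supercompactness embedding $j\colon V\to N$ with critical point $\lambda$, the indestructibility, and the factorisation of $j(\mathbb{M})$ through $\mathbb{M}$ via a master condition, is that in $W:=V^{\mathbb{M}}$ there are stationarily many internally club $\kappa$-guessing models $M\in[H(\theta)]^{\kappa^+}$, for every large enough regular $\theta$ (the higher analogue of Viale--Wei\ss, essentially Wei\ss's $\mathrm{ISP}(\kappa^{++})$ recast in guessing-model language). The internal clubness of these models reflects the continuous, increasing way in which the Mitchell forcing approximates $H(\kappa^{++})$.

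Next I would, working in $W$ and given a tree $T$ of height $\kappa^{++}$ without cofinal branches, define the specialising poset $\mathbb{P}_T$ as the $\kappa$-lift of the Main Theorem's $\mathbb{P}_T$: a condition is a pair $(f,\mathcal{M})$ where $f$ is a partial function from $T$ to $\kappa^+$ with $|f|<\kappa$ that is injective on the chains of its domain (so that $f$ approximates a specialising map $T\to\kappa^+$, that is, a map injective on every chain), and $\mathcal{M}$ is an $\in$-increasing, intersection-closed sequence of length $<\kappa$ of internally club $\kappa$-guessing models from the fixed stationary family, all containing $T$ and the other relevant parameters, subject to the usual coherence requirement that $f$ respect the models in $\mathcal{M}$. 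Closure of $\mathbb{P}_T$ under decreasing sequences of length $<\kappa$ is immediate: the function parts union to a function of size $<\kappa$ by the regularity of $\kappa$, and the internally club guessing models are closed under the relevant $<\kappa$-length unions, so $\mathbb{P}_T$ is $<\!\kappa$-closed and hence preserves all cardinals $\le\kappa$.

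Finally, the two preservation statements are proved by the side-condition machinery, transported upward. Preservation of $\kappa^+$ is the $\kappa$-analogue of properness: using the models on the sequence $\mathcal{M}$ one shows that every condition with $M$ on its sequence has $M$-generic extensions, and the usual amalgamation of conditions over a model yields that $\kappa^+$ is not collapsed and that $\mathbb{P}_T$ has the $\kappa$-approximation property; genericity then delivers, by a density argument, a total specialising map $F\colon T\to\kappa^+$, where the availability of a free colour below each node uses that the $<\kappa$-many predecessors in the domain cannot exhaust $\kappa^+$ colours. The main obstacle, exactly as in the Main Theorem, is the preservation of $\kappa^{++}$: one must show that $\mathbb{P}_T$ adds no cofinal branch to $T$ and collapses nothing to $\kappa^+$ at the top level, and this is where the guessing property of the size-$\kappa^+$ side-condition models is indispensable. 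The argument proceeds by assuming a name for a bad object, reflecting the relevant initial segment into a guessing model $M$ of the sequence, and invoking the guessing property to locate inside $M$ an exact guess for the $M$-approximation of that object, which contradicts either the hypothesis that $T$ has no cofinal branch or the elementarity of $M$. Carrying this reflection-and-guessing step through for $<\kappa$-sized approximations rather than finite ones, and verifying that the internally club guessing models supplied by the preparation are robust enough to run it, is the delicate heart of the proof.
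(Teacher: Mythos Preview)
This theorem is not proved in the present paper. It appears in the introduction solely as an announcement of a result from the author's forthcoming sequel \cite{M-in-prep}; the present paper is explicitly Part~I and defers this higher-cardinal result to that companion work. There is therefore no proof here against which your proposal can be compared.

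That said, your outline is in the spirit one would expect from lifting the paper's methods: a preparatory Mitchell-style forcing from a supercompact to manufacture stationarily many internally club $\kappa$-guessing models of size $\kappa^{+}$, followed by a $\kappa$-lift of $\mathbb{P}_T$. One structural point is worth flagging. The paper's $\mathbb{P}_T$ uses \emph{two} types of side-condition models: the countable models $\mathcal{E}^0$ (securing properness and hence $\aleph_1$-preservation) and the $\omega_1$-guessing $\mathrm{IC}$-models $\mathcal{E}^1$ (securing $\aleph_2$-preservation). The interplay between the two types---in particular the intersections $N\cap M$ with $N\in\mathcal{E}^1$, $M\in\mathcal{E}^0$, and the transfer of guessing in Lemma~\ref{guessing lemma}---is essential to the whole analysis. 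Your description of the side conditions mentions only the large $\kappa$-guessing models; a faithful $\kappa$-lift would also require a class of size-$\kappa$ models playing the role of $\mathcal{E}^0$, and it is genericity for \emph{those} models that delivers the analogue of properness and hence $\kappa^{+}$-preservation, while the guessing models of size $\kappa^{+}$ handle $\kappa^{++}$. As written, your account of $\kappa^{+}$-preservation via ``the models on the sequence $\mathcal{M}$'' conflates the two roles, and the claimed $<\!\kappa$-closure of the side-condition chains is not obviously compatible with having only one model type present.
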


Our paper includes four additional sections. We give the preliminaries in \cref{sec2}. \cref{sec3} is devoted to the introduction and the basic properties of forcing with pure side conditions. We shall introduce our main forcing and state its basic properties in \cref{sec4}. Finally, we establish our main result in \cref{sec5}.

\section{Preliminaries}\label{sec2}

We shall follow  standard conventions and notation, but let us recall some of the most important ones.
 In this paper,
 by $p\leq q$ in a forcing ordering $\leq$, we mean $p$ is stronger than $q$;
    for a cardinal $\theta$, $H_\theta$ denotes the collection of sets whose hereditary size is less than $\theta$; 
 for a set $X$, we let $\mathcal P(X)$ denote the power-set of $X$, and if  $\kappa$ is a cardinal, we let $\mathcal P_\kappa(X)\coloneqq\{A\in\mathcal P(X):|A|<\kappa\}$;
 recall that a set $\mathcal S\subseteq \mathcal P_\kappa(H_\theta)$ is  stationary, if for every function $F:\mathcal P_{\aleph_0}(H_\theta)\rightarrow \mathcal P_\kappa(H_\theta)$, there is $M\prec H_\theta$  in $\mathcal S$ with $M\cap\kappa\in \kappa$ such that $M$ is closed under $F$.

\subsection{Trees}\leavevmode

Let us  recall the definition of a tree and some related concepts.

\begin{definition}
A \emph{tree} is a partially ordered set $(T,<_T)$ such that for every $t\in T$, $b_t\coloneqq\{s\in T: s<_T t\}$ is well ordered with respect to $<_T$.
\end{definition}
\begin{definition}
  Suppose $T=(T,<_T)$ is a tree.
 \begin{enumerate}
   \item For every $t\in T$, the \emph{height} of $t$, denoted by ${\rm ht}_T(t)$, is the order type of $b_t$.
    \item The \emph{height} of $T$, denoted by ${\rm ht}(T)$, is ${\rm sup}\{{\rm ht}_T(t)+1:t\in T\}$.

     \item For every $\alpha\leq {\rm ht}(T)$, $T_{\alpha}$ denotes the set of nodes of height $\alpha$. $T_{\leq\alpha}$ and $T_{<\alpha}$ have the obvious meanings.  In particular, $T=T_{<{\rm ht}(T)}$ and $T_{{\rm ht}(T)}=\varnothing$.
  
     \item A set $b\subseteq T$ is called a \emph{branch} through $T$ if $(b,<_T)$ is a downward-closed and linearly ordered set. A branch is  a \emph{cofinal branch} if its order type is the height of $T$.
    
     \item $T$ is called \emph{Hausdorff} if for every limit ordinal $\alpha$ ($\alpha=0$ is allowed), and every $t\neq s$ in $T_\alpha$, we have $b_t\neq b_s$.
   
      \item For every $t\in T$,  we let  $\overline{b}_t$ denotes $\{s\in T: s\leq_T t\}$.
     \end{enumerate}
\end{definition}

Observe that a Hausdorff tree is rooted, i.e., it has a unique minimal point.

\begin{definition}
Suppose  $\kappa$ is an infinite cardinal.
  A tree $(T,<_T)$ of height $\kappa^+$ is called \emph{special} if there is a  \emph{specialising function}  $f:T\rightarrow\kappa$, i.e.,  if $s<_T t$, then 
  $f(s)\neq f(t)$. 
\end{definition}

\begin{definition}\label{B forcing}
Suppose that $\lambda\leq \kappa$ are infinite regular cardinals.
 Assume that $T$ is  a tree of height $\kappa^+$. Let $\mathbb S_{\lambda}(T)$ denote the forcing notion consisting  of  partial specialising functions, of size less than $\lambda$,  ordered by reversed inclusion, that is $f\in\mathbb S_{\lambda}(T)$ is a partial function from $T$ to $\kappa$ such that if $s,t\in{\rm dom}(f)$ are comparable in $T$, then $f(t)\neq f(s)$.
\end{definition}

\begin{lemma}\label{tree iso}
In order to specialise a tree $T$ (of height $\kappa^+$, for some infinite cardinal $\kappa$), one may assume, without loss of generality, that $T$  is a Hausdorff tree.
\end{lemma}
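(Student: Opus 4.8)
The plan is to pass from $T$ to its \emph{Hausdorffification} $T'$, a quotient of $T$ that collapses exactly the coincidences obstructing the Hausdorff property, and then to verify three things: $T'$ is again a tree of height $\kappa^+$; $T'$ has a cofinal branch if and only if $T$ does; and a specialising function of $T'$ pulls back to a specialising function of $T$. Together these justify replacing $T$ by $T'$, since the forcing to be constructed applies to trees without cofinal branches and produces a specialising function for whichever tree it is fed.

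Concretely, I would let $\approx$ be the equivalence relation on $T$ generated by identifying $s$ with $t$ whenever $b_s=b_t$ and $\mathrm{ht}_T(s)$ is $0$ or a limit ordinal (that is, $s$ and $t$ have the same strict predecessors and neither has an immediate predecessor), set $T'=T/{\approx}$, and order it by $[s]<'[t]$ iff $s'<_T t'$ for some $s'\in[s]$ and some $t'\in[t]$. Checking that $<'$ is a strict partial order and that $(T',<')$ is a tree is routine, the one point worth isolating being that two $<_T$-comparable nodes have distinct heights and so are never $\approx$-related; hence $u\mapsto[u]$ is an order isomorphism of $(b_t,<_T)$ onto $\{[s]:[s]<'[t]\}$, which is therefore well ordered, of order type $\mathrm{ht}_T(t)$. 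In particular $\mathrm{ht}_{T'}([t])=\mathrm{ht}_T(t)$, so $\mathrm{ht}(T')=\kappa^+$. That $T'$ is Hausdorff follows by induction on levels: if $[s]\neq[t]$ lie on a common level that is $0$ or a limit and have the same $<'$-predecessors, then matching the two predecessor chains node by node, using that every node on a successor level constitutes a singleton $\approx$-class, forces $b_s=b_t$, hence $s\approx t$ — a contradiction.

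The implication from a specialising function of $T'$ to one of $T$ is now immediate: if $f'\colon T'\to\kappa$ specialises $T'$, then $t\mapsto f'([t])$ specialises $T$, since $s<_T t$ gives $[s]<'[t]$ with $[s]\neq[t]$ (distinct heights). Likewise, if $b$ is a cofinal branch of $T$ then $\{[t]:t\in b\}$ is a cofinal branch of $T'$. The step I expect to be the main obstacle is the converse — extracting a cofinal branch of $T$ from one of $T'$ — because the quotient map does not lift branches in the naive stepwise way. The observation that makes it go through is that $\approx$ identifies points only on limit or zeroth levels, so every class on a successor level is a singleton: given a cofinal branch $B'$ of $T'$, any two of its members on successor levels are singleton classes that are $<'$-comparable, hence their unique elements are $<_T$-comparable; these elements therefore form a $<_T$-chain of order type $\kappa^+$, whose $<_T$-downward closure is a cofinal branch of $T$. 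This completes the verification, and from now on we assume that the tree to be specialised is Hausdorff.
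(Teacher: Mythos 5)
Your proposal is correct, but it runs in the opposite direction from the paper's argument. The paper does not quotient $T$: it embeds $T$ into a \emph{larger} Hausdorff tree, namely $T^*=$ the set of all non-cofinal branches of $T$ ordered by $\subset$, checks that $T^*$ has the same height and is Hausdorff (two branches of the same limit order type with the same $\subset$-predecessors coincide), and observes that $t\mapsto b_t$ is a weak embedding of $T$ into $T^*$, so a specialising function of $T^*$ pulls back to one of $T$ exactly as your $t\mapsto f'([t])$ does. Comparing the two: the paper's route avoids having to verify that an equivalence relation and a quotient order are well defined, but it can blow up the cardinality of the tree (the set of branches may have size $2^{|T|}$ --- harmless here, since $\mathbb P_T$ works for trees of arbitrary width, whereas your $T'$ satisfies $|T'|\le|T|$); on the other hand, you explicitly prove that $T$ has a cofinal branch if and only if $T'$ does, which is genuinely needed for the intended application (the forcing requires branchlessness) and which the paper leaves implicit for $T^*$ (there it follows by taking unions along, respectively initial segments of, a cofinal chain of branches). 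Your key observation --- that $\approx$-classes on successor levels are singletons, so that both the Hausdorff property of $T'$ and the lifting of a cofinal branch of $T'$ back to $T$ can be read off from the successor levels --- is exactly what makes the quotient route go through, and I see no gap in your argument.
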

\begin{proof}
Recall that a function $f:T_1\rightarrow T_2$ between two trees is called a weak embedding if $f$ respects the strict orders.  It is easily seen that if $T_1$ weakly embeds into $T_2$ and $T_2$ is special, then $T_1$ is special, as the inverse image of an antichain in $T_2$ under a weak embedding is an antichain in $T_1$.
Thus to prove the lemma, it is enough to show that there is a weak embedding from $T$ into a Hausdorff tree $T^*$ of the same height as $T$. 

Let $T^*$ be the set of all non cofinal  branches through $T$. Then, $(T^*,\subset)$ is a  tree of the same height as $T$. Note that $\varnothing $ is the root of $T^*$. Moreover, if $a\in T^*$, then the order type of $(a,<_T)$ is exactly ${\rm ht}_{T^*}(a)$. Suppose that $\alpha$ is a nonzero limit ordinal and $a, a'\in T^*_\alpha$ with $b_a=b_{a'}$. We claim that $a=a'$. Let $t\in a$. Since the order type of $a$ is a limit ordinal, there is $s\in a$ with $t<_T s$. Let $x=\{u\in T: u<_T s\}$. Now $x <_{T^*}a$. Thus $x\in b_{a}=b_{a'}$. Then  $t\in x\subseteq a'$. So we have $a\subseteq a'$. Similarly, we have $a'\subseteq a$, and therefore, $a=a'$. 
Now, let $f:T\rightarrow T^*$ be defined by $f(t)=b_t$. If $s<t$, then $b_s$ is a proper subset of $b_t$, and hence $f$ is a weak embedding.

\end{proof}

\subsection{Strong Properness and the Approximation Property}\leavevmode

Recall that if $M\prec H_\theta$ contains a forcing $\mathbb P$, then a condition $p\in\mathbb P$  is called $(M,\mathbb P)$-generic if for every dense subset $D$ of $\mathbb P$ in $M$, $D\cap M$ is pre-dense below $p$. 

\begin{definition}
Assume that $\mathbb P$ is a forcing, and $\theta$ is a sufficiently large regular cardinal.
Suppose $\mathcal S\subseteq\mathcal P_\kappa(H_\theta)$ consists of elementary submodels.
Then, $\mathbb P$ is said to be \emph{proper for $\mathcal S$}, if for every $M\in\mathcal S$ and every $p\in \mathbb P\cap M$, there is an $(M,\mathbb P)$-generic condition $q\leq p$ .
\end{definition}

\begin{lemma}\label{preserv-byS-proper}
Let $\kappa$ be a regular cardinal.
Assume that $\mathbb P$ is a forcing, and $\theta>\kappa$ is a sufficiently large regular cardinal. Suppose $\mathcal S\subseteq\mathcal P_\kappa(H_\theta)$ is a stationary set of elementary submodels.  If $\mathbb P$ is proper for $\mathcal S$, then $\mathbb P$ preserves the regularity of $\kappa$.
\end{lemma}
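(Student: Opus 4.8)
The plan is a standard reflection argument by contradiction. We may assume $\kappa$ is uncountable, since $\omega$ is regular in every extension. Suppose, toward a contradiction, that $\mathbb P$ forces $\kappa$ to be singular or to be collapsed. In either case the generic extension contains a map with unbounded range from some ordinal below $\kappa$ into $\kappa$ (a surjection witnessing a collapse, or a cofinal sequence witnessing a change of cofinality). So we may fix $p\in\mathbb P$, an ordinal $\mu<\kappa$, and a $\mathbb P$-name $\dot f$ such that $p\Vdash$ ``$\dot f\colon\check\mu\to\check\kappa$ has unbounded range''.

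Next I would choose the reflecting model. Applying the definition of stationarity of $\mathcal S$ recalled in \cref{sec2} to the constant function $F$ with value $\{\mathbb P,p,\dot f,\mu\}$ — which lies in $\mathcal P_\kappa(H_\theta)$ once $\theta$ is taken large enough, using that $\kappa$ is uncountable — yields $M\in\mathcal S$ with $\mathbb P,p,\dot f,\mu\in M$ and $M\cap\kappa\in\kappa$. Put $\delta=M\cap\kappa$. Unwinding $M\cap\kappa\in\kappa$, the ordinal $\delta$ is exactly the set of ordinals of $M$ below $\kappa$, so $\delta\subseteq M$; and since $\mu\in M$ and $\mu<\kappa$, we get $\mu\in\delta$, hence $\mu<\delta$ and every $\xi<\mu$ lies in $M$. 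By properness of $\mathbb P$ for $\mathcal S$, fix an $(M,\mathbb P)$-generic condition $q\le p$.

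The core step is to show $q\Vdash\ran(\dot f)\subseteq\check\delta$, which is absurd: $\delta<\kappa$, yet $q\le p$ forces $\dot f$ to have range unbounded in $\kappa$. Fix $\xi<\mu$, so $\xi\in M$, and let $D_\xi$ be the dense set of conditions deciding the value of $\dot f(\check\xi)$; then $D_\xi\in M$. Since $q$ is $(M,\mathbb P)$-generic, $D_\xi\cap M$ is pre-dense below $q$, and by elementarity each $r\in D_\xi\cap M$ forces $\dot f(\check\xi)=\check\gamma_r$ for a unique ordinal $\gamma_r$, which is definable from $r,\dot f,\xi\in M$ and hence lies in $M\cap\kappa=\delta$. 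Now let $q'\le q$ decide $\dot f(\check\xi)$; choose $r\in D_\xi\cap M$ compatible with $q'$ and a common extension $q''\le q',r$. Then $q''$ forces $\dot f(\check\xi)$ to equal both the value decided by $q'$ and $\check\gamma_r$, so that value is $\gamma_r<\delta$. Since the conditions below $q$ deciding $\dot f(\check\xi)$ are dense below $q$, we get $q\Vdash\dot f(\check\xi)<\check\delta$; as $\xi<\mu$ was arbitrary, $q\Vdash\ran(\dot f)\subseteq\check\delta$, the desired contradiction. Therefore $\kappa$ remains regular.

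The only points requiring any care are the initial reduction — from ``$\kappa$ is non-regular in the extension'' to the existence of a name for an unbounded map out of a fixed $\mu<\kappa$ — and arranging that the single model $M$ captures $p,\dot f,\mu$ simultaneously while still having $M\cap\kappa\in\kappa$; both follow immediately from the definitions collected in \cref{sec2}. Everything else is a direct unwinding of the definition of an $(M,\mathbb P)$-generic condition and of properness for $\mathcal S$.
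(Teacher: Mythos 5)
Your proof is correct and follows essentially the same reflection argument as the paper's: pick $M\in\mathcal S$ containing the relevant objects, take an $(M,\mathbb P)$-generic $q\le p$, and use genericity together with elementarity to pull the decided values of $\dot f$ into $M\cap\kappa$. The only differences are cosmetic — you show $q$ bounds the entire range of $\dot f$ below $M\cap\kappa$ whereas the paper derives a contradiction from a single value forced to be $\ge M\cap\kappa$ — though note that your $D_\xi$ should formally include the conditions incompatible with $p$ (as the paper's $D$ does) in order to be dense in all of $\mathbb P$ rather than merely below $p$.
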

\begin{proof}
Let $\gamma<\kappa$ be an ordinal. Assume towards a contraction that some $p\in\mathbb P$ forces  that $\dot{f}$ is an unbounded function from $\gamma$ into $\kappa$.  Pick $M\in\mathcal S$ such that $\gamma,\kappa,p,\dot{f}\in M$.
Let $q\leq p$ be an $(M,\mathbb P)$-generic condition.
Note that $\gamma\subseteq M$ and $M\cap\kappa\in\kappa$.
By our assumption, we can find a condition $q'\leq q$, and ordinal $\zeta<\gamma$ and  an ordinal $\delta\geq M\cap\kappa$ such that, $q'\Vdash `` \dot{f}(\zeta)=\delta "$.
 Set 
$$D=\{r\leq p: r \mbox{ decides the value } \dot{f}(\zeta)\}\cup\{r\in\mathbb P:r\perp p\}.$$
Then $D$ is a dense subset of $\mathbb P$ and belongs to $M$.
Since $q$ is $(M,\mathbb P)$-generic, there is $r\in D\cap M$ such that $r|| q'$. Thus $r$ is compatible with $p$, and hence, by elementarity, there is $\delta'\in M$ such that $r\Vdash ``\delta'= \dot{f}(\zeta)"$.
Now if $s$ is a common extension of $q'$ and $r$, we have 
$s\Vdash ``\delta'=\delta"$. Thus $\delta'=\delta\in M\cap\kappa$, a contradiction!

\end{proof}

Let us now recall  the following closely related definitions from \cite{MI2005} and  \cite{Hamkins2001}, respectively.
\begin{definition}[strong properness]
  Suppose $\mathbb P$ is a forcing notion.
  \begin{enumerate}
      \item Let $X$ be a set. A condition $p\in\mathbb P$ is said to be \emph{strongly $(X,\mathbb P)$-generic}, if for every $q\leq p$, there is some $q\rest_X\in X\cap\mathbb P$ such that every condition $r\in \mathbb P\cap X$ extending $q\rest_X$ is compatible with $q$.
      \item  For a collection of sets  $\mathcal S$, we say $\mathbb P$ is  \emph{strongly proper for $\mathcal S$}, if for every $X\in\mathcal S$ and every $p\in\mathbb P\cap X$, there is a strongly $(X,\mathbb P)$-generic condition extending $p$.
  \end{enumerate}
\end{definition}

\begin{remark}\label{remark-st-prop-preser}
It is easily seen that if $p$ is strongly $(X,\mathbb P)$-generic and $M\prec H_\theta$ is such that $M\cap \mathbb P=X\cap\mathbb P$, then $p$ is
strongly $(M,\mathbb P)$-generic, and hence $(M,\mathbb P)$-generic.
It turns out that if a forcing notion is strongly proper for  some stationary set $\mathcal S\subseteq\mathcal P_{\kappa}(H_\theta)$, then $\mathbb P$ is $\mathcal S$-proper, and hence it preserves $\kappa$,
 by \cref{preserv-byS-proper}.
 \end{remark}

\begin{definition}[$\kappa$-approximation property]
 Suppose $\kappa$ is an uncountable regular cardinal.
 A forcing notion $\mathbb P$ has the \emph{$\kappa$-approximation property}, if for every $V$-generic filter $G$, and every $A\in V[G]$ with $A\subseteq V$, the following are equivalent.
\begin{enumerate}
\item $A\in V$.
\item 
For every $a\in V$  with $|a|^V<\kappa$, we have $a\cap A\in V$.
\end{enumerate} 
\end{definition}

Note that it is well-known that if a  forcing notion is strongly proper for sufficiently many models in $\mathcal P_\kappa(H_\theta)$, then it has the $\kappa$-approximation property, see \cite{Mitchell-On-the-Hamkins}.

\subsection{Guessing Models}\leavevmode

For a set $M$, we say that a set $x\subseteq M$ is
{\em bounded in} $M$ if there is $y\in M$ such that $x\subseteq y$.
Recall that  an  elementary submodel $M$ of $H_\theta$ is called an \emph{internally club model} (or  \emph{$\rm IC$-model} for short) if it is the union of a continuous $\in$-sequence $\langle M_\alpha:\alpha<\omega_1\rangle$ of countable elementary submodels of $H_\theta$.

\begin{notation}
For   a model $M\prec H_\theta$, let $\kappa_M=\min\{\alpha\in M\cap\theta: \alpha\nsubseteq M\}$. Let $\kappa_M$ be undefined if the above supremum does not exist.
\end{notation}

\begin{definition}
  Suppose $M$ is a set. A set $x$ is \emph{guessed} in $M$ if there is some $x^*\in M$ such that $x^*\cap M=x\cap M$.
\end{definition}

We now recall the definition of a guessing model from \cite{VW2011}.

\begin{definition}[$\gamma$-guessing model]\label{guessing-def}
Assume that $\theta$ is an uncountable regular cardinal. Let $M\prec H_\theta$.
Suppose that $\gamma\in M$  is a regular cardinal with $\gamma\leq\kappa_M$. 
Then  $M$ is said to be a $\gamma$-{\em guessing model} if the following are equivalent for any  $x$ which is bounded in  $M$.
\begin{enumerate}
\item $x$ is $\gamma$-approximated in $M$, i.e., $x\cap a \in M$, for all $a\in M$ of size less than $\gamma$.
\item $x$ is guessed in $M$.
\end{enumerate}

\end{definition}

\begin{definition}[${\rm GM}^*(\omega_2)$]
The principle ${\rm GM}^*(\omega_2)$ states that for every sufficiently large regular cardinal $\theta$, the set of $\omega_1$-guessing elementary  $\rm IC$-submodels of $H_\theta$   is stationary in $\mathcal P_{\omega_2}(H_\theta)$.
\end{definition}

The above principle is  slightly stronger than Wei\ss 's $\rm ISP(\omega_2)$, see \cite{W2010,W2012} for more information on  $\rm ISP(\omega_2)$, which is also equivalent to the principle ${\rm GM}(\omega_2)$ that states  for every sufficiently large regular cardinal $\theta$, the set of $\omega_1$-guessing  elementary submodels of $H_\theta$   is stationary in $\mathcal P_{\omega_2}(H_\theta)$.

\begin{proposition}[Viale--Wei\ss, \cite{VW2011}]\label{fact Viale-Weiss}
$\rm PFA$ implies ${\rm GM}^*(\omega_2)$.
\end{proposition}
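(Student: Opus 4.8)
**Proof plan for Proposition (Viale–Weiß: PFA implies GM\*(ω₂)).**

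The plan is to run the standard ``PFA kills non-guessing by a proper forcing'' argument, being careful to produce \emph{internally club} witnesses rather than merely internally approximated ones. First I would fix a sufficiently large regular $\theta$ and a function $F\colon\mathcal P_{\aleph_0}(H_\theta)\to\mathcal P_{\omega_2}(H_\theta)$, and aim to find $M\prec H_\theta$ with $M\cap\omega_2\in\omega_2$, closed under $F$, which is both an $\rm IC$-model and $\omega_1$-guessing. To do this I would work inside a yet larger $H_\Theta$ containing $\theta$, $F$, and all relevant parameters, and consider the forcing $\mathbb Q$ that adds a continuous increasing $\in$-chain of length $\omega_1$ of countable elementary submodels of $H_\theta$, all closed under $F$, with conditions being countable such chains ordered by end-extension (this is essentially the standard ``approachability/IC-adding'' forcing). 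This $\mathbb Q$ is countably closed — in particular proper — and generically adds an $\rm IC$-model $\dot M$ which is closed under $F$ and has $\dot M\cap\omega_2\in\omega_2$.

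Next I would verify that such a generically added $\rm IC$-model is forced to be $\omega_1$-guessing. Fix a $\mathbb Q$-name $\dot x$ for a bounded subset of $\dot M$ that is forced to be $\omega_1$-approximated in $\dot M$; I must show it is forced to be guessed. The point is that the $\omega_1$-approximation requirement, together with the chain $\langle M_\alpha:\alpha<\omega_1\rangle$ whose union is $\dot M$, lets one reflect: for club-many $\alpha$ the ``piece'' $\dot x\cap M_\alpha$ is an element of $M_{\alpha+1}$ (this uses countable closure of $\mathbb Q$ and elementarity, since $M_\alpha$ is countable so $\dot x\cap M_\alpha$ is $\omega_1$-approximated hence is being built coherently), and then the guessing set $x^*$ can be read off inside $H_\theta$ from the coherent sequence of these pieces. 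This is the technical heart and the step I expect to be the main obstacle: one has to argue that the approximations cohere into a single set $x^*\in \dot M$ (or in $H_\theta$ with $x^*\cap \dot M=\dot x$), which is exactly the content of the Viale–Weiß analysis and requires combining countable closure of $\mathbb Q$, elementarity of the $M_\alpha$'s, and a careful diagonal/reflection argument; I would treat the genericity of $\dot M$ over the countable submodels carefully here.

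Then I would invoke $\rm PFA$. The forcing $\mathbb Q$ is proper (countably closed), so I need only exhibit $\aleph_1$ dense sets whose meeting produces an actual $\rm IC$-model $M\subseteq H_\theta$ in $V$ with the desired properties: for each $\alpha<\omega_1$ the set of conditions of length $>\alpha$ is dense (giving a chain of full length $\omega_1$), and for each relevant name $\dot x$ and each $a\in H_\theta$ the set of conditions deciding enough about $\dot x\cap a$ is dense. A generic filter meeting these yields $M=\bigcup_{\alpha<\omega_1}M_\alpha\prec H_\theta$, closed under $F$, hence $M\cap\omega_2\in\omega_2$ (as $\omega_2\subseteq$ none of these but $M$ is $<\!\omega_2$-sized and internally club so $M\cap\omega_2$ is an ordinal), internally club by construction, and $\omega_1$-guessing by the previous paragraph applied in $V$. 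Since $F$ was arbitrary, the set of such $M$ is stationary in $\mathcal P_{\omega_2}(H_\theta)$, and since $\theta$ was an arbitrary sufficiently large regular cardinal, ${\rm GM}^*(\omega_2)$ holds. The only subtlety I would flag is making sure the witnesses are genuinely $\rm IC$ (the sequence is continuous and each initial segment lies in the next model), not just internally approximable — this is automatic from the design of $\mathbb Q$ but worth stating explicitly.
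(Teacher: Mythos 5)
The paper itself gives no proof of this proposition --- it cites Viale--Wei\ss{} and points to a sketch in Viale's later paper --- so your proposal has to stand on its own, and it does not: there is a genuine gap at its centre. The engine of the actual argument is a \emph{proper forcing with the $\omega_1$-approximation property that collapses $|H_\theta|$ to $\aleph_1$} (e.g.\ a Mitchell/Neeman-style finite-condition forcing, or $\mathrm{Add}(\omega,1)\ast\mathrm{Coll}(\omega_1,H_\theta)$). Your forcing $\mathbb Q$ --- the countably closed poset of countable continuous $\in$-chains --- is essentially the opposite choice: countably closed forcings are the canonical examples that \emph{fail} the $\omega_1$-approximation property (already $\mathrm{Add}(\omega_1,1)$ adds a subset of $\omega_1$ all of whose countable initial segments lie in $V$ but which is not in $V$), and the generic chain $\langle M_\alpha:\alpha<\omega_1\rangle$ itself produces exactly such ``fresh'' coherent unions. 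Your claimed verification that the generic $\rm IC$-model is forced to be $\omega_1$-guessing is therefore not a deferrable technicality: the mechanism you describe --- the approximations $\dot x\cap M_\alpha$ ``cohere into a single set'' --- is precisely how one manufactures a bounded, $\omega_1$-approximated, \emph{unguessed} subset, and nothing in the definition of $\mathbb Q$ rules this out. Since $\mathbb Q$ is definable over any model of $\mathrm{ZFC}$ while the existence of guessing models carries substantial combinatorial content (e.g.\ failure of square-like principles), the claim cannot be a theorem of $\mathrm{ZFC}$ about $\mathbb Q$ alone.

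Second, even granting that $\mathbb Q$ forces its generic model to be guessing, the transfer through $\rm PFA$ in your last paragraph fails. ``$M$ is $\omega_1$-guessing'' is a universally quantified statement over all bounded $x\subseteq M$ lying in $V$ (of which there may be $2^{\aleph_1}$ many); a $\rm PFA$-filter meets only $\aleph_1$ dense sets and is not $V$-generic, so statements forced by $\mathbb Q$ need not hold of the object assembled from the filter, and you cannot allocate a dense set to each potential counterexample $x$. The Viale--Wei\ss{} proof circumvents exactly this difficulty: one arranges, with $\aleph_1$ dense sets, that the filter is generic over the $\aleph_1$-sized model $M$ itself (only $\aleph_1$ many dense subsets of the forcing lie in $M$), passes to the transitive collapse $\bar M$ of $M$, and then uses the $\omega_1$-approximation property of the collapsed forcing over $\bar M$, together with the fact that $H_{\bar\theta}^{\bar M}$ has been collapsed to $\aleph_1$, to handle all candidates $x$ simultaneously. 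Both the choice of forcing and this collapse-and-reflect step are missing from your plan, and without them the argument does not go through.
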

\begin{proof}
The proposition above was mentioned without proof in \cite{VW2011}. A sketch of a proof can be found in \cite[Theorem 4.4]{V2012}.

\end{proof}

The following lemma  plays a crucial role in our later proofs.

\begin{lemma}\label{Baumgartner}
Suppose $\theta$ is an uncountable regular cardinal.
Assume  that $M\prec H_\theta$  is countable.
Let $Z\in M$  a set.
Suppose that $z\mapsto f_z$ is a function on $\mathcal P_{\omega_1}(Z)$ in $M$, where for each $z\in\mathcal P_{\omega_1}(Z)$, $f_z$ is a $\{0,1\}$-valued function with $z\subseteq {\rm dom}(f_z)$.
Assume that $f:Z\cap M\rightarrow 2$  is a function that is not guessed in $M$.
Suppose that $B\in M$ is a  cofinal subset of $\mathcal P_{\omega_1}(Z)$.
Then there is $B^*\in M$ cofinal in  $B$
  such that for every $z\in B^*$, $f_z\nsubseteq f$.
\end{lemma}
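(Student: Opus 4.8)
The plan is to argue by contradiction. Suppose the conclusion fails, i.e.\ $(\dagger)$: for every $B'\in M$ that is a cofinal subset of $(B,\subseteq)$ there is $z\in B'$ with $f_z\subseteq f$. From $(\dagger)$ I will produce an $f^*\in M$ with $f^*\cap M=f\cap M$, contradicting the hypothesis that $f$ is not guessed in $M$. (Here $f\cap M=f$, since each pair $(a,f(a))$ of $f$ has $a\in Z\cap M\subseteq M$ and $f(a)\in 2\subseteq M$, hence lies in $M$.)

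Working inside $M$, for each finite partial function $s$ from $Z$ to $2$ put $B^s=\{z\in B:s\subseteq f_z\}$, and let $T^*$ be the set of all such $s$ with $B^s$ cofinal in $B$. Since $Z$, $B$ and $z\mapsto f_z$ lie in $M\prec H_\theta$, the set $T^*$ belongs to $M$ and is computed correctly. Plainly $\emptyset\in T^*$ and $T^*$ is downward closed under $\subseteq$. The crucial point is that $T^*$ \emph{branches fully}: for every $s\in T^*$ and every $a\in Z$ there is $i\in 2$ with $s\cup\{(a,i)\}\in T^*$. We may assume $a\notin{\rm dom}(s)$. Since $B$ is cofinal in $\mathcal P_{\omega_1}(Z)$ and $B^s\subseteq B$ is cofinal in $B$, the set $B^s$ is cofinal in $\mathcal P_{\omega_1}(Z)$; hence $\{z\in B^s:a\in z\}$ is cofinal in $B$, and every $z$ in it satisfies $a\in z\subseteq{\rm dom}(f_z)$, so $z\in B^{s\cup\{(a,0)\}}\cup B^{s\cup\{(a,1)\}}$. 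Thus this union is cofinal in $B$; and since $(B,\subseteq)$ is directed (being cofinal in the $\sigma$-directed poset $\mathcal P_{\omega_1}(Z)$) and a cofinal subset of a directed poset that is split into two pieces has a cofinal piece, one of $B^{s\cup\{(a,0)\}}$, $B^{s\cup\{(a,1)\}}$ is cofinal in $B$ — as required.

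Using full branching, downward closure and $\emptyset\in T^*$, a recursion along a well-ordering of $Z$ yields a branch through $T^*$, i.e.\ an $f^*\colon Z\to 2$ with $f^*\restriction w\in T^*$ for every finite $w\subseteq Z$ (at each step one picks the next value of $f^*$, by full branching and downward closure, so that every finite restriction of the part of $f^*$ defined so far stays in $T^*$). As all the relevant objects lie in $M$, such an $f^*$ can be found in $M$. Now fix $a\in Z\cap M$; then $f^*\restriction\{a\}=\{(a,f^*(a))\}\in T^*\cap M$, so $B^{f^*\restriction\{a\}}\in M$ is cofinal in $B$, and $(\dagger)$ gives $z\in B$ with $(a,f^*(a))\in f_z$ and $f_z\subseteq f$. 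The first yields $f_z(a)=f^*(a)$, the second yields $f_z(a)=f(a)$, so $f^*(a)=f(a)$. Hence $f^*\restriction(Z\cap M)=f$, so $f^*\cap M=f^*\restriction(Z\cap M)=f=f\cap M$: $f$ is guessed in $M$, a contradiction.

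I expect the main obstacle to be the full branching of $T^*$. Its content is that realizing a finite pattern $s$ cofinally often in $B$, together with the hypothesis $z\subseteq{\rm dom}(f_z)$, forces one of the two one-step extensions of $s$ to be realized cofinally often; this rests on the elementary fact that in a directed partial order a cofinal set partitioned into two parts has a cofinal part. The remaining ingredients — taking $T^*$ and its branch inside $M$, and the identification $f^*\cap M=f\cap M$ — are routine applications of elementarity and of the definition of ``guessed''. Note in passing that $(\dagger)$ enters only in the last step, to pin the branch $f^*$ against $f$; the construction of $f^*$ itself uses only that $B$ is cofinal in $\mathcal P_{\omega_1}(Z)$ and that $z\subseteq{\rm dom}(f_z)$.
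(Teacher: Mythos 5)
Your proof is correct, but it takes a genuinely different route from the paper's. The paper argues directly and coordinate-wise: for each $\zeta\in Z$ and $\epsilon\in\{0,1\}$ it forms $A_\zeta^\epsilon=\{z\in B:\zeta\in{\rm dom}(f_z)\mbox{ and }f_z(\zeta)=\epsilon\}$; either some $\zeta$ has both $A_\zeta^0$ and $A_\zeta^1$ cofinal in $B$ (then by elementarity such a $\zeta$ can be found in $M$ and $B^*=A_\zeta^{1-f(\zeta)}$ works), or else the ``majority colour'' function $h(\zeta)=$ the unique $\epsilon$ with $A_\zeta^\epsilon$ cofinal lies in $M$, and non-guessing of $f$ yields $\zeta\in Z\cap M$ with $h(\zeta)\neq f(\zeta)$, whence again $B^*=A_\zeta^{1-f(\zeta)}$. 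You instead argue by contradiction, build the tree $T^*$ of finite patterns realised cofinally often, thread a total $f^*\in M$ through it, and use the failure of the conclusion to show $f^*$ guesses $f$. Both arguments rest on the same two facts --- a cofinal subset of the directed set $B$ split into two pieces has a cofinal piece, and non-guessing forces a disagreement on $Z\cap M$ between $f$ and any total function in $M$ --- and your $f^*$ restricted to singleton patterns is essentially the paper's $h$. Since your final step only ever invokes the singletons $f^*\upharpoonright\{a\}$, the tree of arbitrary finite patterns is more machinery than you need: defining $f^*(\zeta)\in M$ to be any $\epsilon$ with $B^{\{(\zeta,\epsilon)\}}$ cofinal would already suffice and collapses your argument to the paper's. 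One step you should spell out is the recursion: full branching gives, for each single finite $s\subseteq f^*$ built so far, some $i$ with $s\cup\{(a_\xi,i)\}\in T^*$, but when the domain constructed so far is infinite you need one $i$ that works for \emph{all} finite restrictions simultaneously; this does follow (if $s_0$ rejects $0$ and $s_1$ rejects $1$, apply full branching to $s_0\cup s_1\in T^*$ and downward closure to get a contradiction), but it is not literally an instance of full branching, and the singleton-only version of the argument avoids the issue entirely.
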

\begin{proof}
For each $\zeta\in Z$, and $\epsilon=0,1$,  let
$$A_\zeta^\epsilon=\{z\in B: \zeta\in\dom(f_z) \mbox{ and }f_z(\zeta)=\epsilon\}.$$
Notice that the sequence 
$$\langle A_\zeta^\epsilon:\zeta\in Z,\epsilon\in\{0,1\}\rangle$$
belongs to $M$.
We are done if there is some $\zeta\in Z$ such that both $A_\zeta^0$ and $A_\zeta^1$ are  cofinal in $B$, as then by elementarity one can find such $\zeta\in  M\cap Z$, and then pick $A_\zeta^{1-f(\zeta)}$.
 Therefore, let us assume that  for every $\zeta\in Z$, there is an $\epsilon\in\{0,1\}$, which is necessarily unique,
such that  $A_\zeta^\epsilon$ is cofinal in $B$.
Now, define  $h$ on $ Z$ by letting $h(\zeta)$ be $\epsilon$ if and only if 
$A_\zeta^\epsilon$ is cofinal is $B$. Clearly  $h$ is in $M$, but then
$h\rest_{M}\neq f$
since $f$ is not guessed in $M$. Thus, there exists $\zeta\in  M\cap Z$ such
that $h(\zeta)\neq f(\zeta)$, but  it then implies that $A_\zeta^{1-f(\zeta)}$ is  cofinal in $B$ and  belongs to $M$.
Let $B^*$ be $A_\zeta^{1-f(\zeta)}$. Now if $z\in B^*$, $f_z\nsubseteq f$.
\end{proof}

\section{Pure Side Conditions}\label{sec3}
This section is devoted to the forcing with pure side conditions.  Such a forcing notion, as well as a finite-support iteration of proper forcings with side conditions, was introduced by Neeman in \cite{NE2014}.  However, we cannot use Neeman forcing directly,  since we shall work with non-transitive models. Instead, we follow Veličković's presentation \cite{VeliVen} of Neeman forcing with finite $\in$-chains of models of two types, where both types of models are non-transitive.  We shall sketch some proofs of the necessary facts in this section, and we encourage the reader to consult \cite{VeliVen} for more details.

Fix an  uncountable regular cardinal $\theta$, and let $x\in H_\theta$ be arbitrary. We let $\mathcal{E}^0\coloneqq\mathcal E^0(x)$ denote the collection of all countable elementary  submodels of $(H_{\theta},\in,x)$, and let $\mathcal{E}^1\coloneqq\mathcal E^1(x)$ denote a collection of  elementary $\rm IC$-submodels of  $(H_{\theta},\in,x)$. Note that for every $N\in\mathcal E^1$ and every $M\in\mathcal E^0$, if $N\in M$, then $N\cap M\in\mathcal E^0\cap N$.

\begin{definition}
 Assume that $\mathcal M\subseteq\mathcal{E}^0\cup\mathcal{E}^1$.
  
  \begin{enumerate}
  \item Suppose that $M,N\in\mathcal M$. We say $M$ is below $N$ in $\mathcal M$, or equivalently $N$ is above $M$ in $\mathcal M$, and denote this by $M\in^* N$ if there is a finite set $\{M_i:i\leq n\}\subseteq\mathcal M$ such that
  $M=M_0\in\dots \in M_n=N$.
      \item We say $\mathcal M$ is an $\in$-chain, if for every distinct $M,N\in\mathcal M$, either $M\in^* N$ in $\mathcal M$ or $N\in^* M$ in $\mathcal M$.
      \item We say $\mathcal M$ is closed under intersections if for every $M\in\mathcal M\cap\mathcal{E}^0 $, and every $N\in M\cap \mathcal M$, $N\cap M$ belongs to $\mathcal M$.
      \item If $M,N\in\mathcal M\cup\{\varnothing, H_\theta\}$, then by $(M,N)_{\mathcal{M}}$, and  intervals of other types, we mean that the interval is considered in the linearly ordered structure $(\mathcal M,\in^*)$, e.g., $(M,N)_{\mathcal M}=\{P\in\mathcal M: M\in^* P\in^* N\}$.  
      
  \end{enumerate}
\end{definition}

It is easily seen that if $M\in^* N$ holds in an $\in$-chain $\mathcal M$, and that $N\in\mathcal E^1$, then $M\in N$. We simply write $M\in^* N$, whenever $\mathcal M$ is clear from the context.

\begin{remark}\label{smallremark}
If $M,N\in\mathcal E^0$, then $M\subseteq N$ if and only if there is no $P\in\mathcal E^1\cap\mathcal M$ with $P\cap N \in ^* M\in^* P\in N$. 
\end{remark}

\begin{definition}[forcing with pure side conditions]
  We let $\mathbb M(\mathcal{E}^0,\mathcal{E}^1)$ denote the collection of  $\in$-chains $p=\mathcal M_p\subseteq \mathcal{E}^0\cup\mathcal{E}^1$ which are closed under intersections.
  We consider $\mathbb M(\mathcal{E}^0,\mathcal{E}^1)$ as a notion of forcing ordered by reversed inclusion.
\end{definition}

We  simply  denote $\mathbb M(\mathcal{E}^0,\mathcal{E}^1)$ by $\mathbb M$ whenever there are no confusions.
For a condition $p\in\mathbb M$, we let also $\mathcal{E}^0_p$ and $\mathcal{E}^1_p$ denote $\mathcal M_p\cap\mathcal{E}^0$ and $\mathcal M_p\cap \mathcal{E}^1$, respectively. If $p=(\mathcal M_p,\dots)$ is a condition  in a forcing notion with $\mathcal M_p\in\mathbb M$, we  denote the interval $(M,N)_{\mathcal M_p}$  by $(M,N)_p$; such an agreement applies to other types of intervals as well.

\begin{definition}
Let $M\in\mathcal{E}^0\cup\mathcal{E}^1$, and
  suppose that $p\in\mathbb M\cap M$. We let 
  $p^M$ denote the closure of $\mathcal M\cup\{M\}$ under intersections.
\end{definition}

The following is easy and we leave the proof to the reader.
\begin{fact}[{\cite[Lemma 1.8]{VeliVen}}]\label{Mtop-side}
Let $M\in\mathcal{E}^0\cup\mathcal{E}^1$, and
  suppose that $p\in\mathbb M\cap M$. 
\begin{enumerate}
    \item If $M\in\mathcal{E}^1$, then $p^M=\mathcal M\cup\{M\}$.
    \item If $M\in\mathcal{E}^0$, then $p^M=\mathcal M\cup\{M\}\cup\{N\cap M: N\in\mathcal{E}^1_p\}$.
    \item $p^M$ is a condition in $\mathbb M$ and extends $p$.
\end{enumerate}
\end{fact}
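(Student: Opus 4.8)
The plan is to compute the closure $p^M$ explicitly in each of the two cases, check that what comes out is a condition, and then read off part (3) for free, since a condition containing $\mathcal{M}_p$ is by definition an extension of $p$. The preliminary observation I would record first is that $\mathcal{M}_p\subseteq M$: because $p=\mathcal{M}_p$ is a \emph{finite} set belonging to the elementary submodel $M$, an enumeration of $p$ lies in $M$, so every element of $\mathcal{M}_p$ belongs to $M$. Two consequences will be used throughout: in any $\in$-chain extending $\mathcal{M}_p\cup\{M\}$ the model $M$ is $\in^*$-above every $P\in\mathcal{M}_p$ (the path $P\in M$ has length one), and $M\notin P$ for every $P\in\mathcal{M}_p$ by Foundation, so $M$ can never be the inner model of an intersection triggered by a member of $p$.

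For part (1), where $M\in\mathcal{E}^1$, I would show that $\mathcal{M}_p\cup\{M\}$ is already an $\in$-chain closed under intersections, hence is itself $p^M$. It is an $\in$-chain because $\mathcal{M}_p$ is one and $M$ caps it. For closure under intersections, note that this operation is triggered only by $\mathcal{E}^0$-models, and the only ones present are those of $\mathcal{E}^0_p$; given $P\in\mathcal{E}^0_p$ and $Q\in P\cap(\mathcal{M}_p\cup\{M\})$, we have $Q\ne M$ because $M\notin P$, so $Q\in\mathcal{M}_p$ and then $Q\cap P\in\mathcal{M}_p$ because $p$ is a condition.

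For part (2), where $M\in\mathcal{E}^0$, I would first note that each $N\cap M$ with $N\in\mathcal{E}^1_p$ is forced into the closure --- $N\in M$, $N$ is already present, and $M\in\mathcal{E}^0$ --- and that $N\cap M\in\mathcal{E}^0\cap N$ by the observation recorded when $\mathcal{E}^0,\mathcal{E}^1$ were introduced, so these are genuine $\mathcal{E}^0$-models. It then suffices to prove that $X\coloneqq\mathcal{M}_p\cup\{M\}\cup\{N\cap M:N\in\mathcal{E}^1_p\}$ is an $\in$-chain closed under intersections: being closed and containing the seed $\mathcal{M}_p\cup\{M\}$, $X$ contains the closure $p^M$, while each member of $X$ was just seen to lie in $p^M$, so $X=p^M$. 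That $X$ is an $\in$-chain I would verify pair by pair: pairs inside $\mathcal{M}_p$ are given; $M$ is $\in^*$-above all of $\mathcal{M}_p$, and above each shadow $N\cap M$ via $N\cap M\in N\in M$; a shadow $N\cap M$ is $\in^*$-below $N$ (since $N\cap M\in N$), $\in^*$-below every $\mathcal{M}_p$-model lying $\in^*$-above $N$, and $\in^*$-above every $Q\in\mathcal{M}_p$ with $Q\in^* N$, because for such $Q$ we have $Q\in N$ (here one uses $N\in\mathcal{E}^1$) and $Q\in M$, hence $Q\in N\cap M$; and two shadows $N'\cap M$, $N\cap M$ with $N'\in^* N$ are comparable via $N'\cap M\in N'\in N\cap M$. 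That $X$ is closed under intersections comes down to the soft identity ``$A\in B\Rightarrow A\subseteq B$'' for the models in play, which forces every intersection triggered within $X$ to yield a model already in $X$; the only non-immediate cases involve $M$ or a shadow, and they collapse at once, e.g.\ $P\cap M=P$ for $P\in\mathcal{E}^0_p$, an inner shadow $R\cap M$ inside another $\mathcal{E}^0$-model returns $R\cap M$ since it is countable, and $(N'\cap M)\cap(N\cap M)=N'\cap M$ when $N'\in^* N$, using $N'\subseteq N$. Hence $X=p^M$ is a condition extending $p$, which gives (2) and (3).

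The step I expect to carry the weight is the closure computation of part (2): checking both that no cascade of new intersections appears and that $X$ is genuinely an $\in$-chain. Both rest on the identity ``$A\in B\Rightarrow A\subseteq B$'' for the relevant models --- trivial for countable ones, and for the IC-models of $\mathcal{E}^1$ relying on the sub-fact that such a model contains $\omega_1$, which holds because it is the union of a strictly $\in$-increasing $\omega_1$-sequence of countable elementary submodels, whose traces on $\omega_1$ then form a strictly increasing $\omega_1$-sequence of countable ordinals and hence are cofinal in $\omega_1$. With that identity in hand the remaining bookkeeping is routine but must be carried out for each type of pair of models.
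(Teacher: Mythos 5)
Your proof is correct. The paper gives no argument for this fact at all --- it is stated as ``easy and left to the reader'' with a citation to Veli\v{c}kovi\'c--Venturi --- so there is nothing to compare against; your direct verification (compute the closure explicitly, check it is an $\in$-chain closed under intersections, read off (3) from reverse inclusion) is exactly the intended routine argument, and your preliminary observations ($\mathcal M_p\subseteq M$ since $p$ is finite and in $M$; $\omega_1\subseteq N$ for $N\in\mathcal E^1$, whence $Q\in^* N$ implies $Q\in N$ and $Q\subseteq N$) are the right tools. One small point in the closure check for part (2): the slogan ``$A\in B\Rightarrow A\subseteq B$'' does not literally apply when an $\mathcal E^1_p$-model $Q$ occurs as an element of a shadow $N\cap M$ (there $|Q|=\aleph_1$ while $N\cap M$ is countable, so $Q\not\subseteq N\cap M$); that case is still fine, but for the reason that $Q\in N$ gives $Q\subseteq N$ and hence $Q\cap(N\cap M)=Q\cap M$, which is already one of your shadows. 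Since your ``e.g.'' list omits exactly this subcase, it is worth writing out; everything else is complete.
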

\begin{proof}[\nopunct]
\end{proof}

\begin{definition}
For a condition  $p\in\mathbb M$ and a model $M\in\mathcal M_p$,
  let $p\rest_M\coloneqq\mathcal M_p\cap M$.
\end{definition}
 Notice that $p\rest_M$ is in $M$, as it is a finite subset of $M$.
 If $M$ is in $\mathcal{E}^1$, then  $p\rest_M$ is  the interval $(\varnothing,M)_p$ that is an $\in$-chain, but if $M$ is countable, then it is a union of intervals.
\begin{fact}[{\cite[Fact 1.7]{VeliVen}}]\label{rest description side E0}
Suppose that $p\in\mathbb M$. Assume that $M\in\mathcal M_p$ is countable. Then
$$\mathcal M_{p}\rest_M:=\mathcal M_p\cap M=\mathcal M_p\setminus \bigcup\{[N\cap M,N)_p: N\in(\mathcal{E}^1_p\cap M)\cup \{H_\theta\}\}.$$
\end{fact}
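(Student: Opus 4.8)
The plan is to decide, for each $P\in\mathcal M_p$, whether $P\in M$, exploiting that $(\mathcal M_p,\in^*)$ is a finite linear order. First I would record the book-keeping fact that two $\in^*$-consecutive members of $\mathcal M_p$ are genuinely $\in$-related: if $P$ immediately $\in^*$-precedes $Q$, the finite chain witnessing $P\in^* Q$ lies in $\mathcal M_p$, is $\in^*$-increasing (by a rank computation, since $A\in B$ forces ${\rm rank}(A)<{\rm rank}(B)$), hence has length one, so $P\in Q$. Writing $\mathcal M_p=\{P_0\in P_1\in\cdots\in P_{k-1}\}$ accordingly with $M=P_m$ (note $M\in\mathcal E^0$, as every model in $\mathcal E^1$ has size $\aleph_1$), the indices $i\ge m$ are immediate: $M\in^* P_i$ or $P_i=M$ gives ${\rm rank}(P_i)\ge{\rm rank}(M)$, so $P_i\notin M$, while $P_i\in[H_\theta\cap M,H_\theta)_p=[M,H_\theta)_p$; both sides of the claimed identity already agree on these.

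The heart of the argument is two claims about the indices $i<m$. Claim (a): if $P_i\in[N\cap M,N)_p$ for some $N\in\mathcal E^1_p\cap M$, then $P_i\notin M$. Indeed, $N\cap M\in\mathcal M_p$ by closure under intersections, and $P_i\in^* N$ with $N\in\mathcal E^1$ forces $P_i\in N$; were $P_i\in M$ too we would get $P_i\in N\cap M$, hence ${\rm rank}(P_i)<{\rm rank}(N\cap M)$, contradicting ${\rm rank}(N\cap M)\le{\rm rank}(P_i)$, which follows from $N\cap M\in^* P_i$ or $N\cap M=P_i$ (the equality case being excluded at once by $P_i\in P_i$). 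Specialising to $P_i=N\cap M$, this already yields $N\cap M\notin M$, so no separate input about internally club models is needed. Claim (b): if $i<m$ and $P_i\notin M$, then $P_i\in[N\cap M,N)_p$ for some $N\in\mathcal E^1_p\cap M$. Here the set $\{Q\in\mathcal M_p\cap M:P_i\in^* Q\}$ is nonempty — it contains $P_{m-1}$, since $P_{m-1}\in P_m=M$ and necessarily $i\le m-2$ (as $i=m-1$ would give $P_i\in M$) — so let $N$ be its $\in^*$-least element. Then $N\in\mathcal E^1$: otherwise $N$ would be a countable model in $M$, hence $N\subseteq M$ by elementarity, so its $\in^*$-predecessor $N^-$ (which exists since $P_i\in^* N$ strictly) lies in $N\subseteq M$, contradicting minimality of $N$ unless $N^-=P_i$, which puts $P_i$ in $M$. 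With $N\in\mathcal E^1_p\cap M$ in hand, $N\cap M\in\mathcal M_p$ and $N\cap M\in^* N$ (as $N\cap M\subsetneq N$ for cardinality reasons); finally $N\cap M\in^* P_i$ or $N\cap M=P_i$, for otherwise $P_i$ is $\in^*$-strictly below $N\cap M$, whose $\in^*$-predecessor $R$ then lies in $N\cap M\subseteq M$ and is $\in^*$-strictly below $N$, again contradicting the minimality of $N$ (or $R=P_i\in M$).

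Putting this together finishes the proof: if $P\in\mathcal M_p\cap M$ then $P\in^* M$ strictly, so $P\notin[M,H_\theta)_p$, and the contrapositive of (a) gives $P\notin[N\cap M,N)_p$ for every $N\in\mathcal E^1_p\cap M$; conversely, if $P\in\mathcal M_p\setminus M$ then either $M\in^* P$ or $P=M$, placing $P$ in $[H_\theta\cap M,H_\theta)_p$, or else $P\in^* M$ strictly and (b) applies. Hence $\mathcal M_p\cap M$ is exactly the complement in $\mathcal M_p$ of $\bigcup\{[N\cap M,N)_p:N\in(\mathcal E^1_p\cap M)\cup\{H_\theta\}\}$, as claimed.

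The step I expect to be the real obstacle is Claim (b): one must see that, reading $\mathcal M_p$ downward from $M$, the models are alternately absorbed into $M$ and excluded from it, with each excluded block being precisely an interval $[N\cap M,N)_p$ for an $\mathcal E^1$-model $N\in M$. This is what the minimality choice of $N$, together with the repeated use of ``a countable model that is an element of $M$ is a subset of $M$'', is engineered to capture; everything else is rank arithmetic and the definitional facts about $\in$-chains recalled above.
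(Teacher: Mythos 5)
Your proof is correct and follows essentially the same route as the paper's: one containment comes from the observation that $P\in{}^*N$ with $N\in\mathcal E^1$ forces $P\in N$, so a $P$ that is also in $M$ drops below $N\cap M$; the converse comes from a minimal-model argument producing an $\mathcal E^1$-model $N\in M$ whose interval $[N\cap M,N)_p$ captures $P$. The only (cosmetic) difference is your dual minimality choice — you take the $\in^*$-least element of $\mathcal M_p\cap M$ above $P$ and prove it lies in $\mathcal E^1$, whereas the paper takes the least $\mathcal E^1$-model in $(P,M)_p$ — and your version is somewhat more careful about the endpoints (e.g.\ checking $N\cap M\notin M$ and $N\in M$ explicitly).
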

\begin{proof}
Let $P\in\mathcal M_{p}\rest_M$. Thus $P\in M$, which in turn implies that $P$ does not belong to the interval $[M,H_\theta)_p$.
Now, let $N\in\mathcal E^1_p\cap M$. If $N\in^* P$ or $N=P$, then $P$ does not belong to the interval $[N\cap M,N)_p$.
Suppose $P\in^* N$, then $P\in N$, and hence $P\in N\cap M$, which in turn implies that $P\notin [N\cap M,N)_p$.
Therefore, the LHS is a subset of RHS. To see the other direction. Suppose $P$ does not belong to any interval as described in the above equation. In particular, $P\in^* M$.
Now, if $P\notin M$, it then means there are some models in
$\mathcal E^1_p\cap (P,M)_p$. Let $N$ be the least such model.
Then, $N\cap M\in^* P$, since otherwise by the minimality of
$N$, we  have  $P\in N\cap M\subseteq M$.
Thus $P$ belongs to $[N\cap M,N)_p$, which is a contradiction.
\end{proof}
It is not hard to see that $p\rest_M$ is an $\in$-chain.
Now, the following is immediate.
\begin{fact}\label{restriction side}
For every condition $p\in\mathbb M$ and $M\in\mathcal M_p$, $p\rest_M$ is a condition and $p\leq p\rest_M$.
\end{fact}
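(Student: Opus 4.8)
The plan is to treat the two clauses of the statement separately. That $p\leq p\rest_M$ is immediate and I would dispose of it first: by definition $p\rest_M=\mathcal M_p\cap M\subseteq\mathcal M_p$, and since $\mathbb M$ is ordered by reversed inclusion this is exactly $p\leq p\rest_M$. So the whole content lies in showing $p\rest_M\in\mathbb M$, i.e. that $\mathcal M_p\cap M$ is an $\in$-chain of models from $\mathcal E^0\cup\mathcal E^1$ that is closed under intersections. Containment in $\mathcal E^0\cup\mathcal E^1$ is inherited from $p$, and the $\in$-chain property follows from the description of $\mathcal M_p\cap M$ supplied by \cref{rest description side E0}, as already noted above: when $M\in\mathcal E^1$, $p\rest_M$ is the initial segment $(\varnothing,M)_p$ of the linear order $(\mathcal M_p,\in^*)$, and when $M\in\mathcal E^0$ it is $\mathcal M_p$ with the intervals $[N\cap M,N)_p$, for $N\in(\mathcal E^1_p\cap M)\cup\{H_\theta\}$, deleted --- in either case a structure still linearly ordered by $\in^*$.

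The one clause that actually requires an argument is closure under intersections, and I would verify it directly. Let $P\in(\mathcal M_p\cap M)\cap\mathcal E^0$ and let $N\in P\cap(\mathcal M_p\cap M)$; I must check that $N\cap P\in\mathcal M_p\cap M$. On the one hand $N\cap P\in\mathcal M_p$, since $p$ itself is a condition and hence closed under intersections. On the other hand both $N$ and $P$ are elements of $M$, and $M\prec H_\theta$, so --- the operation $(a,b)\mapsto a\cap b$ being definable without parameters --- $N\cap P\in M$ as well. Thus $N\cap P\in\mathcal M_p\cap M$, which is exactly what was needed, and $p\rest_M\in\mathbb M$ follows.

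I do not expect any real obstacle: the substantive work has been front-loaded into \cref{rest description side E0} and the accompanying remark that $p\rest_M$ is an $\in$-chain, so the proof of this fact collapses to the one-line elementarity computation above. The only point worth flagging is why that computation goes through --- namely that both models being intersected lie inside $M$, so the usual nuisance that $M\notin M$ (which would obstruct concluding, say, $N\cap M\in M$) never arises.
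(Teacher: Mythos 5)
Your proof is correct and matches the paper, which states this Fact with an empty proof immediately after observing (via Fact~\ref{rest description side E0}) that $\mathcal M_p\cap M$ is an $\in$-chain; your elementarity argument for closure under intersections (both models being intersected lie in $M$, so $M\notin M$ causes no trouble) is exactly the intended one-line computation. The only point you lean on without comment is that a subset of an $\in$-chain is not automatically an $\in$-chain (the relation $\in^*$ is computed relative to the ambient set, so deleting models can break witnessing chains), but the paper itself asserts this for $p\rest_M$ without proof just before the Fact, so you are entitled to cite it as you do.
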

\begin{proof}[\nopunct]

\end{proof}

Thus we also have $\mathcal M_{{p}\upharpoonright_M}=\mathcal M_p\rest_M$! This notational equality will be useful later.

\begin{fact}[{\cite[Fact 1.12]{VeliVen}}]\label{side condition U-proper}
Suppose that $p\in\mathbb M$ and $M\in\mathcal{E}^1_p$. Then every condition $q\in M$ extending $p\rest_M$ is compatible with $p$.
\end{fact}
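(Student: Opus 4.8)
The plan is to show that if $p \in \mathbb{M}$, $M \in \mathcal{E}^1_p$, and $q \in M$ extends $p\rest_M$, then $q \cup p$ (more precisely, the closure of $\mathcal{M}_q \cup \mathcal{M}_p$ under intersections) is a condition in $\mathbb{M}$ extending both $q$ and $p$. Since $\mathbb{M}$ is ordered by reverse inclusion, it suffices to produce a single $\in$-chain closed under intersections containing $\mathcal{M}_q \cup \mathcal{M}_p$. The natural candidate is $r := \mathcal{M}_q \cup \mathcal{M}_p$ itself, together with whatever intersections the closure condition forces; I expect that in fact $r$ is already closed under intersections, or nearly so, and the bulk of the work is checking the $\in$-chain property.

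First I would exploit the structure of $p\rest_M = \mathcal{M}_p \cap M$: since $M \in \mathcal{E}^1_p$, by the remark following the definition of $\in^*$ the interval $(\varnothing, M)_p$ equals $\mathcal{M}_p \cap M = p\rest_M$, so $\mathcal{M}_p$ splits as the disjoint union $p\rest_M \sqcup [M, H_\theta)_p$, with every model of $p\rest_M$ being $\in^*$-below $M$ and every model of $[M,H_\theta)_p$ being $\in^*$-above $M$ (or equal to $M$). Now $q \in M$, so every model in $\mathcal{M}_q$ is an element of $M$, hence (by elementarity, since $M \prec H_\theta$ and each such model has hereditary size $<\theta$... actually more simply because $\mathcal{M}_q \subseteq M$) lies $\in$-below $M$; and $q \supseteq p\rest_M$, so $\mathcal{M}_q \supseteq \mathcal{M}_p \cap M$. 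Thus $r = \mathcal{M}_q \cup [M, H_\theta)_p$, and this is visibly linearly ordered by $\in^*$ once I know: (a) $\mathcal{M}_q$ is an $\in$-chain (it is, being a condition), (b) $[M,H_\theta)_p$ is an $\in$-chain (it is, as a sub-interval of the condition $p$), and (c) every element of $\mathcal{M}_q$ is $\in^*$-below every element of $[M,H_\theta)_p$ — which holds because each element of $\mathcal{M}_q$ is in $M$, hence in any $N \in \mathcal{E}^1$ with $M \in^* N$, and is $\in^*$-below $M$ itself; one has to be mildly careful verifying that membership "$\in M$" upgrades to "$\in^*$-below $M$ within $r$" by inserting the finite $\in^*$-witnessing chain inside $\mathcal{M}_q$, using that $q$ is a condition in $M$ and $M \prec H_\theta$.

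Then I would check closure under intersections for $r$. Take $P \in r \cap \mathcal{E}^0$ and $N \in P \cap r$; I must see $N \cap P \in r$. Split into cases by whether $P \in \mathcal{M}_q$ or $P \in [M,H_\theta)_p$. If $P \in \mathcal{M}_q$, then since $N \in P \in M$ we get $N \in M$, so $N \in^* M$ in $r$, meaning $N$ is $\in^*$-below $M$, so $N$ together with the relevant witnessing chain lies in $M$; by elementarity ($N, P \in M \prec H_\theta$) we have $N \cap P \in M$, and then closure of $q$ under intersections inside $M$ gives $N \cap P \in \mathcal{M}_q$. If $P \in [M,H_\theta)_p$ and also $N \in \mathcal{M}_p$, then closure of $p$ handles it. The remaining mixed case is $P \in [M,H_\theta)_p$ (so $P \ne M$, as $M \in \mathcal{E}^1$ is not in $\mathcal{E}^0$; if $P = M$... but $M\notin\mathcal E^0$, so this cannot occur) with $N \in \mathcal{M}_q \setminus \mathcal{M}_p$; then $N \in M \in^* P$, so $N \in P$ and $N \cap P$ — here I use that $N \subseteq M$ is not automatic, so I invoke $\mathcal{E}^1$-facts: $M \in P \cap \mathcal{E}^1$, $N \in M$, hence $N \cap P = N$ (as $N \in M \subseteq P$ would give... ) — more carefully, $N \in M$ and $M \in P$ with $P$ countable elementary, so $M \cap P \in \mathcal{M}_p$ (closure of $p$, noting $M \in P \cap \mathcal{M}_p$) hence $M \cap P \in r$, and then $N \in M \cap P$, placing $N$ below $M \cap P$ in $r$; iterating reduces to the first case.

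I expect the main obstacle to be the bookkeeping in this last mixed case: controlling how a model $N \in \mathcal{M}_q$ that lies in $M$ interacts, via intersection, with a large model $P \in \mathcal{E}^0$ sitting $\in^*$-above $M$, and in particular verifying that the required intersection $N \cap P$ already appears in $\mathcal{M}_q \cup \mathcal{M}_p$ rather than being a genuinely new model. The resolution is to first intersect $P$ with $M$ (legitimate by closure of $p$, since $M \in \mathcal{M}_p \cap P$), observe $M \cap P \in \mathcal{E}^0 \cap M \subseteq$-range handled by $q$, and then reduce to intersections taking place entirely inside $M$, where elementarity of $M$ and closure of the condition $q$ finish the argument. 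Once $r$ is seen to be an $\in$-chain closed under intersections, it is a condition extending both $p$ and $q$, so $q$ and $p$ are compatible, as required.
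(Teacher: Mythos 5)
Your proposal follows the same route as the paper: the common extension is simply $\mathcal M_p\cup\mathcal M_q$, the $\in$-chain property comes from $Q\in M\in^* P$ for $Q\in\mathcal M_q$ and $P\in\mathcal M_p\setminus\mathcal M_q$ (which forces $P\in[M,H_\theta)_p$ since $\mathcal M_p\cap M\subseteq\mathcal M_q$), and closure under intersections is verified by reducing to intersections taken inside $M$, where closure of $q$ applies. The one spot needing a bit more care than you give is the mixed case when $M\in^* P$ but $M\notin P$ for a countable $P$ above $M$: there you cannot intersect $P$ with $M$ directly, but \cref{rest description side E0} yields some $N'\in\mathcal{E}^1_p\cap P$ with $N'\cap P\in^* M\in^* N'$, so $N'\cap P\in\mathcal M_p\cap M\subseteq\mathcal M_q$ and $N\subseteq M\subseteq N'$, and the same reduction goes through with $N'\cap P$ in place of $M\cap P$.
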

\begin{proof}
Let $\mathcal M_r=\mathcal M_p\cup\mathcal M_q$.
It is easy to see that $\mathcal M_r$ is closed under intersections. To see that it is an $\in$-chain, suppose that 
$P\in\mathcal M_p\setminus \mathcal M_q$ 
and $Q\in\mathcal M_q\setminus\mathcal M_p$. If $P\neq M$,
we then have  $Q\in M\in^* P$, and if $P=M$, then obviously $Q\in M$. It is clear that $r\leq p,q$.
\end{proof}

\begin{remark}\label{remark after side condition U-proper}
The above condition  is the greatest lower bound of $p$ and $q$, and denoted  by $p\land q$. Notice that
$$\mathcal M_{p\land q}=\mathcal M_p\cup\mathcal M_q$$
\end{remark} 

\begin{fact}\label{strong-properness E1}
$\mathbb M$ is strongly proper for $\mathcal{E}^1$, and hence if $\mathcal{E}^1$ is stationary, then
$\mathbb M$ preserves $\aleph_2$.
\end{fact}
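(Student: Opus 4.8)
The plan is to verify the defining clause of strong properness directly, with the evident witness. Fix $N\in\mathcal E^1$ and $p\in\mathbb M\cap N$; I claim that $p^N$ is strongly $(N,\mathbb M)$-generic. Since $N\in\mathcal E^1$, \cref{Mtop-side}(1) gives $p^N=\mathcal M_p\cup\{N\}$, and by \cref{Mtop-side}(3) this is a condition of $\mathbb M$ extending $p$; so it suffices to produce, for an arbitrary $q\leq p^N$, some $q\rest_N\in\mathbb M\cap N$ such that every $r\in\mathbb M\cap N$ extending it is compatible with $q$.

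Given such a $q$, I would take $q\rest_N=\mathcal M_q\cap N$. This is legitimate: $N\in\mathcal M_{p^N}\subseteq\mathcal M_q$, so the restriction operation is defined at $N$; the set $\mathcal M_q\cap N$ is a finite collection of elements of $N$ and hence, since $N\prec H_\theta$, an element of $N$; and by \cref{restriction side} it is a condition of $\mathbb M$ with $q\leq q\rest_N$. The required compatibility is then precisely \cref{side condition U-proper} applied to the condition $q$ and the model $N\in\mathcal E^1_q$: every condition in $N$ extending $q\rest_N$ is compatible with $q$. As $q\leq p^N$ was arbitrary, $p^N$ is strongly $(N,\mathbb M)$-generic, and as $p\in\mathbb M\cap N$ was arbitrary, $\mathbb M$ is strongly proper for $\mathcal E^1$.

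For the second assertion, recall that every $N\in\mathcal E^1$ is an elementary submodel of $H_\theta$ of size $\aleph_1$, so $\mathcal E^1\subseteq\mathcal P_{\aleph_2}(H_\theta)$, and that the stationarity hypothesis on $\mathcal E^1$ is exactly the one appearing in \cref{preserv-byS-proper}. Then \cref{remark-st-prop-preser} (instantiated with $X=M=N$) upgrades strong properness for the stationary set $\mathcal E^1$ to properness for $\mathcal E^1$, and \cref{preserv-byS-proper} with $\kappa=\aleph_2$ then yields that $\mathbb M$ preserves the regularity of $\aleph_2$, i.e.\ $\mathbb M$ preserves $\aleph_2$.

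I do not expect a genuine obstacle here: the argument merely chains \cref{Mtop-side}, \cref{restriction side} and \cref{side condition U-proper}. The only points that need attention are that $N$ really belongs to $\mathcal M_q$ — so that $q\rest_N$ is meaningful as the restriction operation and not just as a set-theoretic intersection — and that $\mathcal M_q\cap N\in N$; both are immediate from $q\leq p^N=\mathcal M_p\cup\{N\}$ together with $N\prec H_\theta$.
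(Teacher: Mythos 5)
Your proof is correct and follows essentially the same route as the paper: take $p^N=\mathcal M_p\cup\{N\}$ as the strongly generic condition via \cref{Mtop-side}, use \cref{restriction side} to get $q\rest_N\in N\cap\mathbb M$, and invoke \cref{side condition U-proper} for the compatibility clause, then conclude preservation of $\aleph_2$ from \cref{remark-st-prop-preser} and \cref{preserv-byS-proper}. No gaps.
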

\begin{proof}
Suppose that $M\in\mathcal E^1$. If $p\in M\cap\mathbb M$, then by
\cref{Mtop-side}, $p^M$ is a condition extending $p$.
Let $q\leq p^M$, then $M\in\mathcal M_q$. By \cref{restriction side}, $q\rest_M$ is a condition in $M\cap\mathbb M$. Now if $r\in M\cap\mathbb M$ extends $q\rest_M$, then $q$ is compatible with $r$ by \cref{side condition U-proper}. Thus $q$ is strongly $(M,\mathbb M)$-generic. By \cref{preserv-byS-proper} and \cref{remark-st-prop-preser}, $\mathbb P$ perseveres $\aleph_2$.
\end{proof}

\begin{lemma}[{\cite[Lemma 1.12]{VeliVen}}]\label{side condition C-proper}
Suppose that $p\in\mathbb M$. Let $M\in\mathcal{E}^0_p$. Then every condition $q\in M$ extending $p\rest_M$ is compatible with $q$. In fact, the closure of $\mathcal M_p\cup\mathcal M_q$ is a condition in $\mathbb M$, which is also the greatest lower bound of $p$ and $q$.
\end{lemma}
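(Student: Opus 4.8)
The statement to prove is Lemma 3.20 (the one labelled \verb|side condition C-proper|): if $p \in \mathbb M$ and $M \in \mathcal E^0_p$, then any $q \in M$ extending $p\rest_M$ is compatible with $p$, and in fact the closure of $\mathcal M_p \cup \mathcal M_q$ under intersections is a condition which is the greatest lower bound of $p$ and $q$. The approach mirrors the proof of \cref{side condition U-proper}, but now $M$ is only countable, so $p\rest_M$ is a union of intervals rather than a single initial segment (by \cref{rest description side E0}), and $\mathcal M_p \cup \mathcal M_q$ need not already be closed under intersections — hence the need to pass to the closure. So the plan is: (i) let $\mathcal M_r$ be the closure of $\mathcal M_p \cup \mathcal M_q$ under intersections; (ii) check $\mathcal M_r$ is an $\in$-chain; (iii) check it is closed under intersections (essentially by construction, but one must verify the closure process terminates and stays inside $\mathcal E^0 \cup \mathcal E^1$); (iv) observe $r \le p, q$ and that any lower bound of $p$ and $q$ must contain $\mathcal M_p \cup \mathcal M_q$ and hence (being closed under intersections) all of $\mathcal M_r$, giving the g.l.b. claim.

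For step (ii), the key point is to compare an arbitrary $P \in \mathcal M_p \setminus \mathcal M_q$ with an arbitrary $Q \in \mathcal M_q \setminus \mathcal M_p$ (models in both are unproblematic, and intersections introduced by the closure will be handled afterwards). Since $q \in M$, every model in $\mathcal M_q$ lies in $M$, so $Q \in M$; thus $Q \in^* M$ in any $\in$-chain containing both. Now I case on where $P$ sits relative to $M$ in $\mathcal M_p$: if $M \in^* P$ or $M = P$, then $Q \in M \in^* P$ chains them. If instead $P \in^* M$, then $P \in \mathcal M_p \cap M = \mathcal M_{p\rest_M}$, and since $q \ge p\rest_M$ we get $P \in \mathcal M_q$, contradicting $P \notin \mathcal M_q$ — so this case is vacuous. (Here one uses \cref{rest description side E0} only implicitly, through $\mathcal M_p \cap M = \mathcal M_{p \rest_M} \subseteq \mathcal M_q$.) So $\mathcal M_p \cup \mathcal M_q$ is already linearly ordered by $\in^*$; one then has to see that forming intersections $N \cap M'$ for $M' \in \mathcal E^0$, $N \in M' \cap (\mathcal M_p \cup \mathcal M_q)$ does not destroy this — such an intersection is an element of $\mathcal E^0$ sitting immediately below $N$ and below $M'$, and a short argument (as in \cref{rest description side E0} / \cref{Mtop-side}(2)) shows it fits into the linear order; only finitely many new models appear, so the closure stabilises after finitely many rounds.

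For step (iv): any condition $s \le p, q$ has $\mathcal M_s \supseteq \mathcal M_p \cup \mathcal M_q$ and is closed under intersections, hence $\mathcal M_s \supseteq \mathcal M_r$, i.e. $s \le r$; and $r \le p, q$ since $\mathcal M_r \supseteq \mathcal M_p, \mathcal M_q$. So $r = p \land q$.

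The main obstacle I expect is step (iii) — verifying that the closure under intersections of $\mathcal M_p \cup \mathcal M_q$ is genuinely a condition, i.e. that iterating "add $N \cap M'$ whenever $M' \in \mathcal E^0$ and $N \in M' \cap \mathcal M$" (a) terminates, (b) never leaves $\mathcal E^0 \cup \mathcal E^1$, and (c) preserves the $\in$-chain property at each stage. Termination is because each new model is of the form $N \cap M'$ with $N, M'$ among the original finitely many and such iterated intersections collapse quickly; staying in $\mathcal E^0$ uses the standing fact (noted just before \cref{Mtop-side}) that for $N \in \mathcal E^1$, $M' \in \mathcal E^0$ with $N \in M'$ one has $N \cap M' \in \mathcal E^0 \cap N$, together with the analogous closure fact for two $\mathcal E^0$-models; and preservation of the $\in$-chain structure is the repeated application of the interval analysis in \cref{rest description side E0}. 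I would present this by saying it is routine and "entirely parallel to \cite[Lemma 1.12]{VeliVen}", since the paper explicitly defers to Veličković–Venturi for these bookkeeping details, and only spell out the novel case analysis of step (ii), which is where the hypothesis $q \ge p\rest_M$ is actually used.
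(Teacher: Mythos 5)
Your step (ii) contains a genuine gap, and it is precisely at the point where the hypothesis $M\in\mathcal E^0$ (rather than $\mathcal E^1$) matters. You claim that if $P\in\mathcal M_p\setminus\mathcal M_q$ satisfies $P\in^* M$, then $P\in\mathcal M_p\cap M=\mathcal M_{p\rest_M}\subseteq\mathcal M_q$, so that this case is vacuous. But being $\in^*$-below $M$ is strictly weaker than being an element of $M$; this is exactly the content of \cref{rest description side E0}, which you invoke but misapply. Whenever $\mathcal E^1_p\cap M\neq\varnothing$, say $N\in\mathcal E^1_p\cap M$, the model $N\cap M$ belongs to $\mathcal M_p$ (closure under intersections) and satisfies $N\cap M\in^* N\in^* M$, yet $N\cap M\notin M$ (otherwise $M\cap\omega_1=(N\cap M)\cap\omega_1$ would be an element of $M$). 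Since every element of $\mathcal M_q$ \emph{is} an element of $M$, such a $P=N\cap M$ --- and more generally any $P$ in an interval $[N\cap M,N)_p$ with $N\in\mathcal E^1_p\cap M$ --- lies in $\mathcal M_p\setminus\mathcal M_q$ with $P\in^* M$. So the case you dismiss as vacuous is in fact the only nontrivial one; this is the essential difference from \cref{side condition U-proper}, where $p\rest_M$ really is the initial segment $(\varnothing,M)_p$.

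The comparability does hold, but via an argument that genuinely uses $q\leq p\rest_M$ together with the interval decomposition: such a $P$ lies in $[N\cap M,N)_p$ for some $N\in\mathcal E^1_p\cap M\subseteq\mathcal M_{p\rest_M}\subseteq\mathcal M_q$. Now compare a given $Q\in\mathcal M_q\setminus\mathcal M_p$ with $N$ \emph{inside the chain} $\mathcal M_q$. If $Q\in^* N$ there, then $Q\in N$ (as $N\in\mathcal E^1$) and $Q\in M$, hence $Q\in N\cap M$; since $N\cap M$ is the least element of the interval containing $P$, this gives $Q\in^* P$. If instead $N\in^* Q$, then $P\in^* N\in^* Q$. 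With this repaired, your remaining steps --- deferring the bookkeeping for the newly created intersections $N\cap M'$ (with $N\in\mathcal E^1_q$, $M'\in\mathcal E^0_p$, $N\in M'$, as recorded in \cref{remark after side condition C-proper}) to \cite{VeliVen}, and the greatest-lower-bound argument --- are acceptable, particularly as the paper itself omits the proof entirely and cites \cite[Lemma 1.12]{VeliVen}.
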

\begin{proof}[\nopunct]
\end{proof}
\begin{remark}\label{remark after side condition C-proper}
As before we again denote the above common extension by $p\land q$. Notice that 
$$\mathcal M_{p\land q}=\mathcal M_p\cup\mathcal M_q\cup\{N\cap M: N\in\mathcal E^1_q,~ M\in \mathcal E^0_p, \mbox{ and  N}\in M \}$$
\end{remark}

The following is similar to \cref{strong-properness E1} in light of \cref{side condition C-proper}.
\begin{fact}
$\mathbb M$ is strongly proper for $\mathcal{E}^0$. 
\end{fact}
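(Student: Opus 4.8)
The plan is to mimic the proof of \cref{strong-properness E1} verbatim, replacing the appeal to \cref{side condition U-proper} by the corresponding statement \cref{side condition C-proper} for models in $\mathcal{E}^0$. So fix $M\in\mathcal{E}^0$ and a condition $p\in\mathbb M\cap M$; we must produce a strongly $(M,\mathbb M)$-generic condition extending $p$.

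First I would take $q_0\coloneqq p^M$, the closure of $\mathcal M_p\cup\{M\}$ under intersections. By \cref{Mtop-side}(2),(3) this is a condition of $\mathbb M$ extending $p$, and $M\in\mathcal M_{q_0}$. Now I claim $q_0$ itself is strongly $(M,\mathbb M)$-generic. Indeed, let $q\leq q_0$ be arbitrary; since $M\in\mathcal M_{q_0}\subseteq\mathcal M_q$, the restriction $q\rest_M=\mathcal M_q\cap M$ makes sense, it is a finite subset of $M$ hence lies in $M$, and by \cref{restriction side} it is a condition with $q\leq q\rest_M$. Take $q\rest_M$ as the required ``projection'' into $M\cap\mathbb M$. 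If $r\in\mathbb M\cap M$ is any condition extending $q\rest_M$, then applying \cref{side condition C-proper} to the condition $q$ and the model $M\in\mathcal{E}^0_q$ (note $r\in M$ extends $q\rest_M$), we get that $r$ is compatible with $q$ — in fact $q\land r$, the closure of $\mathcal M_q\cup\mathcal M_r$ described in \cref{remark after side condition C-proper}, is a common extension. This is exactly the definition of $q_0$ being strongly $(M,\mathbb M)$-generic, and since $q_0$ extends $p$, we are done.

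There is essentially no obstacle here: all the combinatorial content lives in \cref{side condition C-proper} (closure under intersections plus the $\in$-chain property of $\mathcal M_q\cup\mathcal M_r$), which is quoted from \cite{VeliVen}, and in \cref{restriction side} (that $p\rest_M$ is a condition). The only point worth a sentence of care is that for $M\in\mathcal{E}^0$ the model $M$ is countable, so $q\rest_M$ is not a single interval but a finite union of intervals as in \cref{rest description side E0}; this does not affect the argument since we only need that $q\rest_M\in M\cap\mathbb M$ and $q\leq q\rest_M$. If desired, one may add the remark that, $\mathcal{E}^0$ being stationary in $\mathcal P_{\omega_1}(H_\theta)$, \cref{preserv-byS-proper} together with \cref{remark-st-prop-preser} then yields that $\mathbb M$ preserves $\aleph_1$, and that by \cite{Mitchell-On-the-Hamkins} strong properness for enough models of $\mathcal{E}^0$ gives the $\omega_1$-approximation property — but neither is asserted in the statement, so the proof can stop at strong properness.
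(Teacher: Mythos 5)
Your proof is correct and is exactly the argument the paper intends: it transcribes the proof of the strong properness for $\mathcal{E}^1$ word for word, substituting the compatibility lemma for countable models (the analogue asserting that any $r\in M\cap\mathbb M$ extending $q\rest_M$ is compatible with $q$ via the closure of $\mathcal M_q\cup\mathcal M_r$) for the one used for models of size $\aleph_1$. The paper itself leaves this proof to the reader with precisely that indication, so there is nothing to add.
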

\begin{proof}[\nopunct]

\end{proof}

\section{The Forcing Construction}\label{sec4}

In this section, we first present the phenomenon of \emph{overlapping} that was introduced by Neeman in his paper \cite{Neeman2017} regarding (partial) specialisation of trees of height and size $\omega_2$. Neeman's strategy is to attach $\mathbb S_{\omega}(T)$ to side conditions consisting of models of two types: countable and transitive, where he also requires several constraints describing the interaction of the working parts, which are elements of  $\mathbb S_{\omega}(T)$, and the models as side conditions. He then analyses this interaction. Our approach is similar to Neeman's, and we still need to require one of the fundamental constraints,  though our forcing is simpler than Neeman's. His definition of overlapping reads as follows: A model $M$ overlaps a node $t\in T\setminus M$, if there is no non-cofinal branch $b\in M$ with $t\in b$. Our terminology is different from Neeman's; we say a node  $t\in T$ is guessed in $M$ if $t$ belongs to some (non-cofinal) branch $b\in M$. 

Throughout this section, we fix a Hausdorff tree $(T,<_T)$ of height $\omega_2$ without cofinal branches.
We also fix a regular cardinal $\theta$ such that $\mathcal P(T)\in H_{\theta}$. We let $\mathcal{E}^0\coloneqq\mathcal E^0(T)$ and $\mathcal{E}^1\coloneqq\mathcal E^1(T)$ consist, respectively, of countable elementary submodels, and $\omega_1$-guessing elementary  $\rm IC$-submodels of $(H_\theta,\in,T)$. We reserve the symbols $p,q,r$ for forcing conditions, and 
$s,t,u$ for nodes in $T$.
\subsection{Overlaps Between Models and Nodes}\leavevmode

\begin{convention}
A branch through $T$ is called a $T$-branch.
\end{convention}

\begin{definition}
 Suppose that $t\in T$ and $M\in\mathcal{E}^0\cup\mathcal{E}^1$ . We abuse language and say $t$ is \emph{guessed}  in $M$ if and only if there is a $T$-branch $b\in M$ with $t\in b$.
\end{definition}
Thus every $t\in M$ is already guessed in $M$, and that no node $t$ with ${\rm ht}(t)\geq{\rm sup}(M\cap\omega_2)$ is guessed in $M$, since $M$ has no cofinal branches. We shall often use the following without mentioning.

\begin{lemma}
Suppose that $t\in T$ and $M\in\mathcal{E}^0\cup\mathcal{E}^1$. If there is $s\in M$ with
$t\leq_T s$, then $t$ is guessed in $M$.
\end{lemma}
\begin{proof}
Pick $s\in T\cap M$  with $t\leq_T s$. Then $\bar{b}_s\in M$ is a $T$-branch  and $t\in \bar{b}_s$.
\end{proof}

\begin{notation}
Assume that $t\in T$ and $M\in\mathcal{E}^0\cup\mathcal{E}^1$. Then
\begin{itemize}
    \item  $\eta_M(t)$ denotes ${\rm sup}\{{\rm ht}(s): s\in T\cap M \text{ and } s\leq_T t \}$.
\item $O_M(t)$ denotes the unique node $s\in T_{\eta_M(t)}$ such that $s\leq_T t$.
\item $b_M(t)$ denotes $b_{O_M(t)}$.
\end{itemize}

\end{notation}
Observe that $O_M(t)$ is always well-defined as $T$ is a rooted tree belonging to every model in $\mathcal{E}^0\cup\mathcal{E}^1$. By definition, we have $\eta_M(t)\leq {\rm sup}(M\cap\omega_2)$. In our analysis, we shall focus on $O_M(t)$ rather than $t$ itself. It would be useful to have this intuition that if $t\notin M$, then the node $O_M(t)$ is where $b_t$ detaches from $M$.  We shall see that if $M\in\mathcal{E}^1$, then not only $\eta_M(t)$ is less than $M\cap\omega_2$, but also if its cofinality is uncountable, then $O_M(t)$ is in $M$. 
Moreover, if $M\in\mathcal E^1$ , then $t$ is guessed in $M$ if and only if $t=O_M(t)\in M$. The situation is different for countable models,  as if $M\in\mathcal E^0$ and $t\in M$ is of uncountable height in $T$, then one can find some $s\in b_t\setminus M$. Such an $s$ is necessarily guessed in $M$ though it does not belong to $M$.

\begin{lemma}\label{eta in M}
Suppose that $t\in T$ and  $M\in\mathcal{E}^0\cup\mathcal{E}^1$.

\begin{enumerate}
    \item If $t$ is  guessed in $M$ and $\eta_M(t)\in M$, then $t\in M$
    \item If $t$ is guessed in $M$, but $\eta_M(t)\notin M$, then  ${\rm ht}(t)\leq {\rm min}(M\cap\omega_2\setminus \eta_M(t))$. 
\end{enumerate}

\end{lemma}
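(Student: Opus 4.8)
The plan is to analyse the node $O_M(t)$, which by definition lies on $b_t$ and has height $\eta_M(t)=\sup\{\mathrm{ht}(s):s\in T\cap M,\ s\leq_T t\}$. Fix a $T$-branch $b\in M$ with $t\in b$, witnessing that $t$ is guessed in $M$; note $b$ is downward closed and linearly ordered, so $\overline b_t\subseteq b$, and in particular $b$ contains a node of each height $<\mathrm{ht}(t)$ along $b_t$. For part (1), suppose $\eta_M(t)\in M$. Since $b\in M$ and $b$ is linearly ordered and downward closed, the unique element of $b$ of height $\eta_M(t)$ is definable from $b$ and $\eta_M(t)$, hence lies in $M$ by elementarity; but that element is exactly $O_M(t)$. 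So $O_M(t)\in M$, which shows $\eta_M(t)=\mathrm{ht}(O_M(t))$ is attained by a node of $M$ below $t$. Now I claim $O_M(t)=t$: otherwise there is a node $s'\in b$ with $O_M(t)<_T s'\leq_T t$ and $\mathrm{ht}(s')>\eta_M(t)$ (using that $\mathrm{ht}(t)>\eta_M(t)$ when $t\neq O_M(t)$, since $O_M(t)\leq_T t$); and since $b\in M$, $O_M(t)\in M$, the immediate successor of $O_M(t)$ along $b$ — or more carefully, the next element of $b$ strictly above $O_M(t)$ in height, which exists because $t\in b$ sits above $O_M(t)$ — is again definable in $M$, giving a node of $M$ below $t$ of height $>\eta_M(t)$, contradicting the definition of $\eta_M(t)$. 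Hence $t=O_M(t)\in M$.

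For part (2), assume $\eta_M(t)\notin M$ and set $\delta=\min(M\cap\omega_2\setminus\eta_M(t))$; I must show $\mathrm{ht}(t)\leq\delta$. Suppose toward a contradiction that $\mathrm{ht}(t)>\delta$. Since $O_M(t)\leq_T t$ and $\mathrm{ht}(O_M(t))=\eta_M(t)\leq\delta<\mathrm{ht}(t)$, there is a (unique) node $s\in b_t$ of height $\delta$, and $s\in b$ because $b\supseteq\overline b_t$. As $\delta\in M$ and $b\in M$, elementarity gives that the unique node of $b$ of height $\delta$ — namely $s$ — lies in $M$. But then $s\in T\cap M$ and $s\leq_T t$, so $\mathrm{ht}(s)=\delta\leq\eta_M(t)$ by the definition of $\eta_M(t)$; combined with $\eta_M(t)\leq\delta$ this forces $\delta=\eta_M(t)$, whence $\eta_M(t)=\delta\in M$, contradicting $\eta_M(t)\notin M$. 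Therefore $\mathrm{ht}(t)\leq\delta=\min(M\cap\omega_2\setminus\eta_M(t))$, as required.

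The only delicate point — and the step I'd expect to be the main obstacle — is justifying "the unique node of $b$ of a given height $\gamma<\mathrm{ht}(b)$ belongs to $M$" from $b\in M$ and $\gamma\in M$. This needs that $b$, as a branch, genuinely has a node of height $\gamma$ for every $\gamma$ below its order type, i.e. $b$ is a downward-closed chain whose order type is some ordinal $\le\omega_2$, and that "the element of $b$ of height $\gamma$" is a first-order definable (hence absolute to $M$) function of $b$ and $\gamma$. Both hold: $b$ being a $T$-branch through a tree means $(b,<_T)$ is well-ordered and downward closed, so order type equals the sup of heights attained, and $\mathrm{ht}_T$ restricted to $b$ is an order isomorphism onto that ordinal; all of this is expressible in $(H_\theta,\in,T)$. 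I would state this as a one-line observation (or fold it into the proof) and then the two cases go through as above. Throughout I use only elementarity of $M\prec H_\theta$ and the bookkeeping around $\eta_M,O_M$, so the argument is uniform for $M\in\mathcal E^0\cup\mathcal E^1$ and does not need the guessing property at all.
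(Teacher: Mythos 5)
Your proof is correct and follows essentially the same route as the paper's: in both parts one picks the node of the branch $b\in M$ at a height lying in $M$ (namely $\eta_M(t)$, resp.\ $\eta^*=\min(M\cap\omega_2\setminus\eta_M(t))$), observes by elementarity that it belongs to $M$ and lies below $t$, and derives the contradiction with the definition of $\eta_M(t)$ via the successor height $\eta_M(t)+1$ in part (1). The only point the paper makes explicit that you elide is that $M\cap\omega_2\setminus\eta_M(t)$ is nonempty (so the minimum exists), which follows because the branch $b\in M$ witnessing the guess is non-cofinal, hence its order type is an element of $M\cap\omega_2$ above $\eta_M(t)$.
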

\begin{proof}
Of course, the first item follows from the proof of the second one, but we prefer to give  independent proofs.
\begin{enumerate}
    \item   Assume that  $b\in M$ is a $T$-branch containing $t$. Pick $s\in b\cap M$ of height $\eta_M(t)$, which is possible as $t\in b$ implies that the order-type of $b$ is at least $\eta_M(t)+1$. Thus $s\leq_T t$. On the other hand, if $s<_T t$, then there is $u\in b\cap M$ of height $\eta_M(t)+1$, but then $u\leq_T t$, which is impossible by the definition of $\eta_M(t)$. Thus $t=s\in M$.

\item We may assume that $M$ is in $\mathcal E^0$ as otherwise it is trivial. One easily observes that $\eta_M(t)$ is below ${\rm sup}(M\cap \omega_2)$ since $T$ does not have cofinal branches. Now $\eta^*\coloneqq{\rm min}(M\cap\omega_2\setminus \eta_M(t))$ is an ordinal below $\omega_2$, but above $\eta_M(t)$. Let $b\in M$ be a branch containing $t$.
Assume towards a contradiction that ${\rm ht}(t)>\eta^*$, then there is some node $s\in b$ of height $\eta^*$, and thus
$s<_T t$. It then follows that $\eta_M(t)\geq \eta^*>\eta_M(t)$, a contradiction.

\end{enumerate}
\end{proof}

The following is too easy, and  we leave the proof to the reader.
\begin{lemma}\label{successor overlap}
Suppose that $t\in T$ and  $M\in\mathcal{E}^0\cup\mathcal{E}^1$. If
$\eta_M(t)$ is a successor ordinal, then  $O_M(t)$ is in $M$.
\end{lemma}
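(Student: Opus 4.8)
The statement to prove is \cref{successor overlap}: if $\eta_M(t)$ is a successor ordinal, then $O_M(t)\in M$. The plan is to exploit the definition of $\eta_M(t)$ as a supremum together with elementarity. Write $\eta_M(t)=\beta+1$. Since $\eta_M(t)=\sup\{{\rm ht}(s):s\in T\cap M \text{ and } s\leq_T t\}$ and this supremum is a successor ordinal, it must actually be attained: there is some $s_0\in T\cap M$ with $s_0\leq_T t$ and ${\rm ht}(s_0)=\beta+1$ (a successor ordinal is never the supremum of a set of ordinals not containing it).

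\textbf{Key step.} Now I claim $s_0=O_M(t)$. By definition $O_M(t)$ is the unique node of height $\eta_M(t)=\beta+1$ lying below $t$ in $T$. Since $s_0\leq_T t$ and ${\rm ht}(s_0)=\beta+1=\eta_M(t)$, uniqueness forces $s_0=O_M(t)$. Hence $O_M(t)=s_0\in M$, as desired.

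\textbf{Main obstacle.} There is essentially no obstacle here — this is why the authors call the lemma "too easy". The only point requiring a moment's care is the observation that a successor ordinal which is the supremum of a set $S$ of ordinals must belong to $S$ (equivalently, is a maximum): if $\beta+1=\sup S$ but $\beta+1\notin S$, then every element of $S$ is $\leq\beta$, so $\sup S\leq\beta<\beta+1$, a contradiction. Once this is noted, the identification of the witnessing node with $O_M(t)$ via the uniqueness clause in the definition of $O_M(t)$ is immediate, and membership in $M$ follows since the witness was chosen in $T\cap M$. One should also briefly note that $\eta_M(t)$ being a successor presupposes in particular that the defining set is nonempty, which is automatic since the root of $T$ lies in every model and below every node.
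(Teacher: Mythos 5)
Your proof is correct and is exactly the argument the paper has in mind: the paper omits the proof as "too easy" but the remark immediately following the lemma states that the point is precisely that a successor supremum is attained by an element of $T\cap M$, which must then equal $O_M(t)$ by the uniqueness clause in its definition. Nothing is missing.
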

\begin{proof}[\nopunct]
\end{proof}

In general, if the supremum in the definition of $\eta_M(t)$ is attained by an element in $T\cap M$, then that element is $O_M(t)$, which belongs to $M$. The above lemma essentially means that it does happen if $\eta_M(t)$ is a successor ordinal.
We now turn our attention to the situation where the overlaps are more complicated as $\eta_M(t)$ is a limit ordinal.

\begin{lemma}\label{successor or limit}
Suppose that $t\in T$ and  $M\in\mathcal{E}^1$. If
${\rm cof}(\eta_M(t))$ is not countable, then  $O_M(t)\in M$.
\end{lemma}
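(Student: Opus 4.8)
Suppose $t \in T$ and $M \in \mathcal{E}^1$. If $\mathrm{cof}(\eta_M(t))$ is not countable, then $O_M(t) \in M$.

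The plan is to argue by contradiction: assume $O_M(t)\notin M$ and derive a contradiction from the $\omega_1$-guessing property of $M$. Write $\eta:=\eta_M(t)$. Since $\cof(\eta)>\omega$ we may assume $\eta$ is a nonzero limit ordinal, for otherwise $O_M(t)$ is the root of $T$ (if $\eta=0$) or already lies in $M$ by \cref{successor overlap} (if $\eta$ is a successor). The first step is to pin down $b_M(t)$ concretely: I claim $c:=\{s\in T\cap M:s\leq_T t\}$ equals $b_{O_M(t)}=b_M(t)$ and is contained in $M$. Indeed, for $s\in c$ one has $\overline{b}_s\subseteq M$, because $M$ is an $\mathrm{IC}$-model, so $M\cap\omega_1=\omega_1$ and hence every ordinal of $M$ below $\omega_2$ is a subset of $M$, while $\overline{b}_s$ is, inside $M$, a surjective image of ${\rm ht}_T(s)+1<\omega_2$; moreover $\overline{b}_s\subseteq c$, since its members are $\leq_T s\leq_T t$ and lie in $M$. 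Thus $c$ is a downward-closed chain, and since $\eta=\sup\{{\rm ht}_T(s):s\in c\}$, every $v<_T O_M(t)$ satisfies $v<_T s$ for some $s\in c$, so $v\in\overline{b}_s\subseteq c$; the reverse inclusion $c\subseteq b_{O_M(t)}$ is immediate. So $c$ is a branch of order type $\eta$ with $c\subseteq M$, and it is bounded in $M$ because $c\subseteq T\in M$.

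The second step is to show that $c$ is $\omega_1$-approximated in $M$, hence guessed. Given a countable $a\in M$ (so $a\subseteq M$), the heights of the nodes of $c\cap a$ form a countable subset of $\eta$, hence are bounded by some $\xi<\eta$ — this is the only place the hypothesis $\cof(\eta)>\omega$ is used. Letting $v_\xi\in c$ be the node of height $\xi$, comparability within the chain $c$ gives $c\cap a=\overline{b}_{v_\xi}\cap a$, which lies in $M$ since $v_\xi,a\in M$. As $M$ is an $\omega_1$-guessing model and $c$ is bounded in $M$, there is $y^*\in M$ with $y^*\cap M=c$.

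The crux is recovering $O_M(t)$ from $y^*$. Working inside $M$, set $E:=\{v\in T:\overline{b}_v\subseteq y^*\}\in M$. This set is downward closed, and $E\cap M=c$: the inclusion $c\subseteq E$ uses $\overline{b}_s\subseteq c\subseteq y^*$ for $s\in c$, while $v\in E\cap M$ forces $v\in\overline{b}_v\subseteq y^*$, so $v\in y^*\cap M=c$. Since $c$ is a chain, $M$ cannot contain two $<_T$-incomparable members of $E$, so $M\models$ ``$E$ is linearly ordered by $<_T$''; by elementarity $E$ is genuinely a chain, hence a branch of $T$ of which $c$ is an initial segment. Put $\rho:={\rm ot}(E)\in M$, so $\eta\leq\rho<\omega_2$. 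If $\rho=\eta$, then $c=E\in M$; by elementarity $M$ contains a node $v$ with $b_v=c$, and by Hausdorffness (here $\eta$ is a limit) such $v$ is unique, so $v=O_M(t)\in M$, a contradiction. If $\rho>\eta$, then $\eta\in\rho\subseteq M$, so the unique $w\in E$ whose set of $<_T$-predecessors has order type $\eta$ is definable from $E,\eta\in M$ and hence lies in $M$; but that predecessor set is the initial segment of $E$ of order type $\eta$, namely $c=b_{O_M(t)}$, so $w=O_M(t)$ by Hausdorffness — again a contradiction. Therefore $O_M(t)\in M$.

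I expect the third step to be the genuine obstacle: the guessing property only delivers a set $y^*$ with $y^*\cap M=b_M(t)$, and the real content is converting this into the membership $O_M(t)\in M$. The device that does the work is the downward-closed ``thickening'' $E$ of $y^*$, whose linearity is forced by elementarity precisely because its trace on $M$ is a chain, together with the absoluteness of ordinals of $M$ below $\omega_2$ — which is what places the relevant ordinal $\eta$ itself into $M$.
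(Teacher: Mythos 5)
Your proof is correct and follows essentially the same route as the paper: show $b_M(t)\subseteq M$, use the uncountable cofinality of $\eta_M(t)$ to see that $b_M(t)$ is $\omega_1$-approximated and hence guessed in $M$, and then recover $O_M(t)$ from the guessed set via Hausdorffness. The only difference is in the last step and is cosmetic: where you introduce the downward-closed thickening $E$ and split into the cases $\rho=\eta$ and $\rho>\eta$, the paper argues directly by elementarity that the guessed set is itself a $T$-branch, hence non-cofinal, of size at most $\aleph_1$ and therefore contained in $M$, so it equals $b_M(t)$ and lies in $M$.
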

\begin{proof}
By
\cref{successor overlap}, we may assume that $\eta_M(t)$ is a limit ordinal, and thus of cofinality $\omega_1$.
Let $\eta=\eta_M(t)$. 
Since $M$ is of size $\aleph_1$ and $\omega_1\subseteq M$, we have  $b_M(t)\subseteq M$. For every countable $a\in M$, the height of nodes in 
$a\cap b_M(t)$ is bounded below $\eta$ due  to the fact that $\eta_M(t)$ has uncountable cofinality.
Thus it is easily seen that $b_M(t)$ is countably approximated in $M$. Since $M$ is an $\omega_1$-guessing model, there is $b\in M$  such that $b\cap M=b_M(t)$. 
By elementarity, $b$ is a $T$-branch, and hence it is of size $\aleph_1$ (in particular, $\eta<M\cap\omega_2$.) Thus $b\subseteq M$, which in turn implies that
 $b_M(t)=b\in M$. But then $O_M(t)\in M$ as it can be read off from
$b_M(t)$ due to the fact that $T$ is Hausdorff.

\end{proof}

\begin{corollary}\label{successor or limit-cor}
Suppose that $t\in T$ and  $M\in\mathcal{E}^1$.  Then  $\eta_M(t)$ is in $M$.
\end{corollary}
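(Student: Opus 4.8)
The plan is to deduce Corollary~\ref{successor or limit-cor} directly from the preceding lemmas, splitting into cases according to the nature of $\eta_M(t)$. First I would recall that by definition $\eta_M(t)=\sup\{{\rm ht}(s):s\in T\cap M\text{ and }s\leq_T t\}\leq\sup(M\cap\omega_2)$, and that $T$ has no cofinal branches, so in fact this supremum is strictly below $\sup(M\cap\omega_2)$ (the argument already used inside the proof of \cref{eta in M}(2)): otherwise $b_M(t)$, or rather $\overline{b}_{O_M(t)}$ intersected with $M$, would be a cofinal piece witnessing a cofinal branch in $M$, contradicting elementarity.

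Now the case analysis. If $\eta_M(t)$ is a successor ordinal, then by \cref{successor overlap} we have $O_M(t)\in M$, and since $\eta_M(t)={\rm ht}(O_M(t))$ and $O_M(t)\in M$ implies ${\rm ht}(O_M(t))\in M$ by elementarity, we conclude $\eta_M(t)\in M$. If $\eta_M(t)$ is a limit ordinal, then either ${\rm cof}(\eta_M(t))=\omega$ or ${\rm cof}(\eta_M(t))=\omega_1$ (it cannot be larger since $\eta_M(t)<\sup(M\cap\omega_2)<\omega_2$, so it has cardinality at most $\aleph_1$). In the uncountable-cofinality case, \cref{successor or limit} gives $O_M(t)\in M$, and again ${\rm ht}(O_M(t))=\eta_M(t)\in M$ by elementarity. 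It remains to treat the case ${\rm cof}(\eta_M(t))=\omega$ (which forces $M\in\mathcal E^1$, since for $M\in\mathcal E^0$ the claim $\eta_M(t)\in M$ need not even be what we want — but actually here we only care about $M\in\mathcal E^1$ as in the hypothesis).

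For the remaining case, $M\in\mathcal E^1$ and $\eta:=\eta_M(t)$ has countable cofinality. Since $\omega_1\subseteq M$ and $M$ has size $\aleph_1$, the branch $b_M(t)=b_{O_M(t)}$ is a subset of $M$ once we know $\eta<M\cap\omega_2$, which we established above. The key observation is that the set $D=\{{\rm ht}(s):s\in b_M(t)\}=\eta$ (as an ordinal) is cofinally approximated: for every countable $a\in M$, the set $a\cap b_M(t)$ lies in $M$ — indeed, because $\eta$ has countable cofinality one can fix in $M$ a cofinal $\omega$-sequence $\langle s_n:n<\omega\rangle$ in $b_M(t)$ (its heights cofinal in $\eta$, each $s_n\in M$), and then $b_M(t)=\bigcup_n\overline{b}_{s_n}\setminus\{s:{\rm ht}(s)\geq\eta\}$... but wait, this already shows $b_M(t)\in M$ without invoking guessing. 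Concretely: pick $s_n\in T\cap M$, $s_n\leq_T t$, with ${\rm ht}(s_n)\nearrow\eta$; since each $s_n\in M$, the sequence $\langle \overline{b}_{s_n}:n<\omega\rangle\in M$, hence $b=\bigcup_n \overline{b}_{s_n}\in M$ is a branch of order type $\eta$; its height is $\eta$, and $\eta\in M$ by elementarity since $\eta={\rm ht}(b)$ and $b\in M$. The main (very small) obstacle is just confirming that such a cofinal sequence $\langle s_n\rangle$ with parameters in $M$ exists: this is immediate from the definition of $\eta_M(t)$ as a supremum of heights of elements of $T\cap M$ below $t$ of countable cofinality, together with elementarity of $M$ applied to the statement ``there is a countable cofinal subset of heights of such nodes.'' Hence $\eta_M(t)\in M$ in all cases, completing the proof.

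\begin{proof}
We have $\eta_M(t)\leq\sup(M\cap\omega_2)$, and since $T$ has no cofinal branches the supremum defining $\eta_M(t)$ is actually strictly below $\sup(M\cap\omega_2)$, exactly as in the proof of \cref{eta in M}(2); in particular $\eta_M(t)<M\cap\omega_2<\omega_2$, so $\eta_M(t)$ has cardinality at most $\aleph_1$, whence ${\rm cof}(\eta_M(t))\in\{1,\omega,\omega_1\}$ (we interpret ${\rm cof}=1$ as ``successor or zero'').

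If $\eta_M(t)$ is a successor ordinal, then $O_M(t)\in M$ by \cref{successor overlap}, and therefore $\eta_M(t)={\rm ht}(O_M(t))\in M$ by elementarity. If ${\rm cof}(\eta_M(t))=\omega_1$, then $O_M(t)\in M$ by \cref{successor or limit}, and again $\eta_M(t)={\rm ht}(O_M(t))\in M$.

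Finally, suppose ${\rm cof}(\eta_M(t))=\omega$; write $\eta=\eta_M(t)$. By the definition of $\eta$ as the supremum of ${\rm ht}(s)$ over $s\in T\cap M$ with $s\leq_T t$, and by elementarity of $M$, there is in $M$ a sequence $\langle s_n:n<\omega\rangle$ of elements of $T\cap M$, each $\leq_T O_M(t)$, with ${\rm ht}(s_n)$ strictly increasing and cofinal in $\eta$. (Such a sequence exists because the map $\zeta\mapsto O_M(t){\restriction}\zeta$ is definable from $O_M(t)$ and ${\rm cof}(\eta)=\omega$; more simply, the set $\{{\rm ht}(s):s\in T\cap M,\ s\leq_T t\}$ has supremum $\eta$ of countable cofinality, and $M$ thinks it contains a cofinal $\omega$-sequence of its heights, realised by corresponding nodes $s_n$, all of which lie in $M$.) Then $\langle\overline{b}_{s_n}:n<\omega\rangle\in M$, so $b\coloneqq\bigcup_{n<\omega}\overline{b}_{s_n}\in M$ is a $T$-branch of order type $\eta$. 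Hence $\eta={\rm ht}(b)\in M$ by elementarity.

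In all cases $\eta_M(t)\in M$.
\end{proof}
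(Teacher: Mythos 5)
Your three-way split on the cofinality of $\eta_M(t)$ is reasonable, and your successor and uncountable-cofinality cases are correct — the latter, via \cref{successor or limit}, is exactly where the paper's own one-line proof does its work (the paper rules out $\eta_M(t)=M\cap\omega_2$ by noting that this ordinal has cofinality $\omega_1$, so \cref{successor or limit} would put $M\cap\omega_2={\rm ht}(O_M(t))$ into $M$, a contradiction). The genuine gap is in your countable-cofinality case. You claim that, by elementarity, $M$ contains a sequence $\langle s_n:n<\omega\rangle$ of nodes below $t$ with heights cofinal in $\eta=\eta_M(t)$. Elementarity gives no such thing: the set $\{s\in T\cap M:s\leq_T t\}$ is defined from $t\notin M$ and from $M$ itself, so it is not a parameter $M$ can quantify over; and an $\rm IC$-model of size $\aleph_1$ is not closed under $\omega$-sequences, so an externally chosen sequence whose terms all lie in $M$ need not be an element of $M$. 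Worse, if such a sequence did exist in $M$, your argument would yield $b_M(t)\in M$ and hence (reading off the node, as in the proof of \cref{successor or limit}) $O_M(t)\in M$ whenever ${\rm cof}(\eta_M(t))=\omega$ — which is false in general, and is precisely the situation the paper's $M$-support machinery for $\mathcal E^1$-models (\cref{M-support U def}) is built to handle. The correct and much shorter treatment of this case: $M\cap\omega_2$ is an ordinal of cofinality $\omega_1$, so any $\eta\leq M\cap\omega_2$ of countable cofinality satisfies $\eta<M\cap\omega_2$ and hence $\eta\in M$.

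A secondary problem is your opening claim that $\eta_M(t)<\sup(M\cap\omega_2)$ holds ``exactly as in the proof of \cref{eta in M}(2)''. That proof uses the hypothesis that $t$ is guessed in $M$ (a non-cofinal branch $b\in M$ containing $t$ bounds the relevant heights), which is not assumed here; ruling out $\eta_M(t)=M\cap\omega_2$ is in fact the entire nontrivial content of the corollary and genuinely requires the guessing property through \cref{successor or limit}. As it happens you only use this claim to bound ${\rm cof}(\eta_M(t))$ by $\omega_1$, for which the trivial bound $\eta_M(t)\leq\sup(M\cap\omega_2)<\omega_2$ suffices, but as written the step begs the question.
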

\begin{proof}
By definition $\eta_M(t)\leq M\cap\omega_2$. Since $M$ is an $\rm IC$-model with $\omega_1\subseteq M$, the ordinal $M\cap\omega_2$ is of uncountable cofinality. If $\eta_M(t)= M\cap\omega_2$, then by \cref{successor or limit}, $O_M(t)\in M$. This is a contradiction, as $M\cap\omega_2=\eta_M(t)={\rm ht}(O_M(t))\in M$! Thus $\eta_M(t)<M\cap\omega_2$, and hence $\eta_M(t)\in M$
\end{proof}

The following is  key  for us.
\begin{lemma}\label{guessing lemma}
Assume  that  $N\in\mathcal{E}^1$ and $M\in \mathcal{E}^0$ with $N\in M$.
Let $t\in T\cap N$. If $t$ is guessed in $M$, then $t$ is guessed in $N\cap M$.
\end{lemma}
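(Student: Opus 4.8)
The plan is to take a $T$-branch witnessing that $t$ is guessed in $M$ and, using the guessing property of $N$, replace it by one lying in $N\cap M$. I would fix a $T$-branch $b\in M$ with $t\in b$. The first observation is that $\bar b_t\subseteq N$: since $\bar b_t\in N$ has size $\le\aleph_1$ and $\omega_1\subseteq N$, we get $\bar b_t\subseteq N$. Hence every $<_T$-predecessor of $t$ lies in $N$, so $b\cap N$ is downward closed (for $s\in b\cap N$ one has $\bar b_s\subseteq N$), and therefore $c_0:=b\cap N$ is a $T$-branch with $t\in c_0$, $\bar b_t\subseteq c_0\subseteq N$, and $c_0\in M$ (as $b,N\in M$).

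Next I would dispose of the cases where $c_0\in N$: in particular where $c_0$ has a $<_T$-maximal node $s^{*}$, since then $s^{*}\in N$ and $c_0=\bar b_{s^{*}}\in N$; and where $t\in M$, since then $\bar b_t\in N\cap M$. In all these, $c_0$ (resp.\ $\bar b_t$) is a $T$-branch through $t$ lying in $N\cap M$, and we are done. So I may assume $t\notin M$ and $c_0\notin N$ has no maximal node; then $c_0$ has limit order type, and (by \cref{eta in M} and \cref{successor overlap}) $s_0:=O_M(t)\notin M$, $\eta_M(t)={\rm ht}(s_0)$ is a limit ordinal not in $M$ with ${\rm cof}(\eta_M(t))=\omega$ — the supremum defining $\eta_M(t)$ ranges over the countable set $\{{\rm ht}(s):s\in T\cap M,\ s\le_T t\}$ and is not attained — and since that set is contained in $\bar b_t\subseteq N$, one also has $\eta_{N\cap M}(t)=\eta_M(t)$ and $O_{N\cap M}(t)=s_0$.

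The heart of the clean part of the argument is the case where $c_0$ is $\omega_1$-approximated in $N$. Here, since $N$ is $\omega_1$-guessing and $c_0\subseteq T\in N$ is bounded in $N$, there is $c^{*}\in N$ with $c^{*}\cap N=c_0$; the statement ``$\exists c^{*}\in N\ (c^{*}\cap N=c_0)$'' has parameters $N,c_0\in M$, so by elementarity of $M$ such a $c^{*}$ can be found in $M$, whence $c^{*}\in N\cap M$ and $t\in c_0=c^{*}\cap N\subseteq c^{*}$. The branch is then extracted by setting $\tilde c:=\{s\in c^{*}:\bar b_s\subseteq c^{*}\}$: this set lies in $N\cap M$ (being definable from $c^{*}$ and $T$), is downward closed, contains $\bar b_t$ (hence $t$), satisfies $\tilde c\cap N=c_0$ (using $\bar b_s\subseteq c_0\subseteq c^{*}$ for $s\in c_0$ for one inclusion and $\tilde c\subseteq c^{*}$ for the other), and is linearly ordered — otherwise ``$\tilde c$ has two $<_T$-incomparable elements'' would hold in $H_\theta$, hence in $N$ since $\tilde c\in N$, giving incomparable elements of $\tilde c\cap N=c_0$, which is impossible. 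Thus $\tilde c$ is the desired $T$-branch in $N\cap M$ through $t$.

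What remains, and where the real work lies, is the case where $c_0=b\cap N$ is \emph{not} $\omega_1$-approximated in $N$. One checks this forces a countable $a\in N$ with $b\cap a$ cofinal in $c_0$; reflecting in $M$, such an $a$ may be taken in $N\cap M$, so that $b\cap a$ is a countable chain inside $N\cap M$ whose downward closure contains $t$, yet $b\cap a\notin N$. The guessing property of $N$ is useless on $c_0$ now, and instead I would work inside the countable model $\widetilde M:=N\cap M\in N$, invoking \cref{Baumgartner} with a carefully chosen coherent family $z\mapsto f_z$ of branch-approximations and a cofinal set $B$ — designed so that the branch through $t$ (pinned down above $s_0$ by $b$) cannot be avoided cofinally — to conclude that its trace on $T\cap\widetilde M$ is guessed in $\widetilde M$, and then extracting a branch as above. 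I expect this last case to be the main obstacle: the delicate point is to arrange the approximating family so that a successful guess produces a branch actually passing through $t$, and not merely through $O_M(t)$.
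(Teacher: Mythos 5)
There is a genuine gap, and you have flagged it yourself: the case where $c_0=b\cap N$ is \emph{not} $\omega_1$-approximated in $N$ is left as a plan rather than a proof, and the plan you sketch (invoking \cref{Baumgartner} with a coherent family of branch-approximations inside $N\cap M$) is not how that case should go --- \cref{Baumgartner} is a tool for killing guesses cofinally often in the properness argument, and it is neither needed nor well-suited here. The point you are missing is that the unhandled case is actually the \emph{easy} one and requires no guessing at all. If $c_0$ is not $\omega_1$-approximated in $N$, then (as you essentially observe) $\gamma\coloneqq\sup\{{\rm ht}(s):s\in b\cap N\}$ is not attained and has cofinality $\omega$. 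Since $b,N,\gamma\in M$, elementarity of $M$ yields a strictly $<_T$-increasing sequence $\langle s_n:n<\omega\rangle\in M$ of nodes of $b\cap N$ whose heights are cofinal in $\gamma$; each $s_n$ lies in $M$ (being a term of a sequence in $M$ with index in $\omega\subseteq M$) and in $N$, and since ${\rm ht}(t)<\gamma$ (the supremum is not attained) and $b$ is linearly ordered, some $s_n$ satisfies $t<_T s_n$, whence $\overline{b}_{s_n}\in N\cap M$ witnesses that $t$ is guessed in $N\cap M$. This is exactly the paper's countable-cofinality case; the paper splits on ${\rm cof}(\gamma)\in\{\omega,\omega_1\}$ and uses the guessing property of $N$ only when ${\rm cof}(\gamma)=\omega_1$, where the trace is automatically $\omega_1$-approximated.

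The rest of your argument is sound and close to the paper's: the disposal of the attained/trivial cases, and the $\omega_1$-approximated case (guess $c_0$ by $c^*\in N$, pull $c^*$ into $M$ by elementarity, and extract the downward-closed linearly ordered part $\tilde c$) is a correct, slightly more careful version of the paper's Case 2. One stylistic caveat: your digression on $\eta_M(t)$ and $O_M(t)$ is a red herring here --- the relevant ordinal is $\sup\{{\rm ht}(s):s\in b\cap N\}$, which is computed over the $\aleph_1$-sized model $N$ and can have cofinality $\omega_1$, not the countable-cofinality ordinal $\eta_M(t)$ computed over $M$. Conflating the two is what pushed you toward the wrong tool in the final case.
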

\begin{proof}
Let $b\in M$ be a $T$-branch containing $t$.
Let $\gamma={\rm sup}\{{\rm ht}(s):s\in N\cap b\}$. Then $\gamma$ exists as $t\in N$ and ${\rm ht}(t)\leq\gamma$. Note that  $\gamma\in M\cap\omega_2$ by elementarity.
Observe that if $\gamma={\rm ht}(s)$, for some $s\in N\cap b$, then by elementarity, $s\in N\cap M$. We then have $t\in \overline{b}_s\in N\cap M$. Thus let us assume that the supremum $\gamma$  is not obtained by any element of $N\cap b$. In particular, ${\rm ht}(t)<\gamma$ and the cofinality of $\gamma$ is either $\omega$ or $\omega_1$.
 We consider two cases: \\

\textbf{Case 1:} ${\rm cof}(\gamma)=\omega$.\\
 By elementarity,  there is
 a strictly $<_T$-increasing sequence $\langle s_n: n\in\omega\rangle\in M$ of nodes in $b\cap N$ such that $\sup\{{\rm ht}(s_n):n\in\omega\}=\gamma$.
Since we assumed ${\rm ht}(t)<\gamma$,  there is $n$ such that $t\leq_T s_n$. Note that $s_n\in N\cap M$, and hence $t\in \overline{b}_{s_n}\in N\cap M$. Therefore, $t$ is guessed in $N\cap M$.\\

\textbf{Case 2:} ${\rm cof}(\gamma)=\omega_1$.\\
 We claim that $ b\cap T_{\leq \gamma}$ is guessed in $N$. To see this, observe that $ b\cap T_{\leq \gamma}$ is  $\omega_1$-approximated in $N$, since if $a\in N$ is a countable set, then
 there is $s\in N\cap b\cap T_{\leq\gamma}$ such that
$a\cap b\cap T_{\leq\gamma}= a\cap b_s$ (as the cofinality of $\gamma$ is $\omega_1$.) But $a\cap b_s\in N$. As $N$ is an $\omega_1$-guessing model, we have
 $ b\cap T_{\leq \gamma}$ is guessed in $N$.
 By the elementarity of $M$, there is $b^*\in N\cap M$ such that
 $b^*\cap N=b\cap T_{\leq\gamma}\cap N $. Now $t\in N\cap b\cap T_{\leq\gamma}=b^*\cap N$. Notice that, by elementarity, $b^*$ is a $T$-branch. Thus $b^*\in N\cap M$ witnesses that $t$ is guessed in $N\cap M$. 
\end{proof}

\begin{lemma}\label{delta and intersection}
Assume  that  $N\in\mathcal{E}^1$ and $M\in \mathcal{E}^0$ with $N\in M$.
Let $t\in T\cap N$. Then $\eta_{N\cap M}(t)=\eta_M(t)$, and hence $O_{N\cap M}(t)=O_M(t)$.
\end{lemma}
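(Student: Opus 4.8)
The statement to be shown is that for $N \in \mathcal{E}^1$, $M \in \mathcal{E}^0$ with $N \in M$, and $t \in T \cap N$, we have $\eta_{N\cap M}(t) = \eta_M(t)$ (and the equality of $O$'s then follows since both are the unique node of that height below $t$). Since $N \cap M \subseteq M$, the inequality $\eta_{N\cap M}(t) \leq \eta_M(t)$ is immediate: the supremum defining $\eta_{N\cap M}(t)$ ranges over a subset of the nodes defining $\eta_M(t)$. So the work is entirely in the reverse inequality $\eta_M(t) \geq \eta_{N\cap M}(t)$ — no, rather $\eta_M(t) \leq \eta_{N\cap M}(t)$, i.e. every node of $T \cap M$ below $t$ contributes a height that is already witnessed (up to supremum) by nodes of $T \cap N \cap M$ below $t$.

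First I would observe that since $t \in N$, the node $t$ is guessed in $N$, hence $t = O_N(t) \in N$ by the remarks following the definition of $O_M$ (for $M \in \mathcal{E}^1$, $t$ guessed in $M$ iff $t = O_M(t) \in M$); in fact this is just $t \in N$ directly. The key input is \cref{guessing lemma}: I would first note that $t$ is guessed in $M$ — indeed $t \in N \in M$ and $b_t \in N \subseteq$ ... wait, more carefully: $t \in T \cap N$ and $N \in M$, so $t \in M$? No — $t \in N$ and $N \in M$ does not give $t \in M$ in general, but $T \in M$ and $t$'s position can be recovered: actually since $N \in M$ and $N$ is countable-from-$M$'s-perspective is false ($N \in \mathcal{E}^1$). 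Let me instead argue $t$ is guessed in $M$: the branch $\bar b_t$ — hmm, $\bar b_t \in N$ need not be in $M$. The cleanest route: for any $s \in T \cap M$ with $s \leq_T t$, I want to find $s' \in T \cap N \cap M$ with $s \leq_T s' \leq_T t$, or at least with ${\rm ht}(s') \geq {\rm ht}(s)$ and $s' \leq_T t$; taking suprema then gives $\eta_{N\cap M}(t) \geq \eta_M(t)$.

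To produce such $s'$: fix $s \in T \cap M$ with $s \leq_T t$. Then $\bar b_s = \{u : u \leq_T s\} \in M$ is a $T$-branch, and $t$'s initial segment below $s$ lies in it; but more to the point, $s \leq_T t$ and $t \in N$ witnesses that $s$ is "guessed in $N$ from $M$" in a suitable sense — I would apply \cref{guessing lemma} with the roles arranged so that $t' := s$ plays the node and observe $s$ is guessed in $N$? That needs $s \in$ some branch in $N$. Since $\bar b_t \in N$ (as $t \in N$) and $s \in \bar b_t$ when $s \leq_T t$ — but is $s \in \bar b_t$? Yes, $s \leq_T t$ means $s \in \bar b_t$, and $\bar b_t \in N$. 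So $s$ is guessed in $N$ via $\bar b_t \in N$. Now I also need $s$ guessed in $M$: that's trivial, $s \in M$. So \cref{guessing lemma} applied to the node $s$ (with $N \in \mathcal{E}^1$, $M \in \mathcal{E}^0$, $N \in M$, $s \in T \cap N$ — wait, is $s \in N$? Not necessarily!). This is the gap: $s \in T \cap M$ but possibly $s \notin N$.

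The fix, and the \textbf{main obstacle}: I must work with $O_N(s)$ or rather handle the case $s \notin N$ directly. If $s \in N \cap M$, nothing to do. If $s \notin N$ but $s \leq_T t$ with $t \in N$: then $s \in \bar b_t$ and $\bar b_t \cap N$ is a proper initial segment; let $\gamma = {\rm sup}\{{\rm ht}(u) : u \in N \cap \bar b_t\}$. If ${\rm ht}(s) < \gamma$ then there's $u \in N \cap \bar b_t$ with $s <_T u \leq_T t$, and by elementarity (since $\bar b_t \cap N$ — no, I want $u \in M$ too). Here I would mimic the case analysis in the proof of \cref{guessing lemma}: split on ${\rm cof}(\gamma)$. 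Actually the slickest approach is to invoke \cref{guessing lemma} itself with the node $t$ (which \emph{is} in $N$) to get that $t$ is guessed in $N \cap M$, say by a branch $b^* \in N \cap M$ with $t \in b^*$; then $\eta_{N\cap M}(t) \geq {\rm sup}\{{\rm ht}(u) : u \in b^* \cap (N \cap M)\}$, and I claim this supremum is $\geq \eta_M(t)$. For the latter: given $s \in T \cap M$, $s \leq_T t$, since $b^*$ is a branch through $t$ and $b^* \in M$, and $s \in \bar b_t \subseteq b^* \cup \{\text{stuff above}\}$... since $s \leq_T t$ and $t \in b^*$ downward-closed, $s \in b^*$. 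So $s \in b^* \cap M$, hence (elementarity, $b^* \in M$) there is $s' \in b^* \cap M$ with ${\rm ht}(s') = {\rm ht}(s)$ — namely $s$ itself, need $s' \in N \cap M$ though. Hmm. I think the honest plan is: use \cref{guessing lemma} to get $b^* \in N \cap M$ with $t \in b^*$; then for each $s \in T\cap M$ with $s \le_T t$ we have $s \in b^*$; the cofinality split on ${\rm ot}(b^* \cap M \cap T_{\le {\rm ht}(t)})$ — actually the cleanest: $s \in b^* \in M$ gives that the predecessor structure of $s$ is definable in $M$, and by elementarity in $N \cap M$ there are nodes of $N \cap M$ in $b^*$ cofinal in ${\rm ht}(s)$ (since $\gamma \ge {\rm ht}(s)$ and either $\gamma$ is attained in $N\cap M$ or $N\cap M$ meets $b^*$ cofinally below $\gamma$ by the argument of \cref{guessing lemma}). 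Then those nodes are $\le_T t$ and lie in $N \cap M$, so $\eta_{N \cap M}(t) \ge {\rm ht}(s)$. Taking the sup over $s$ gives $\eta_{N\cap M}(t) \ge \eta_M(t)$, completing the proof. I expect the bookkeeping around "$s \notin N$" to be the only real subtlety; everything else is elementarity and the definition of $\eta$.
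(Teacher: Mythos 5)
There is a genuine gap, and it sits exactly where you flagged your ``main obstacle''. The difficulty you spend most of the proposal fighting --- that a node $s\in T\cap M$ with $s\leq_T t$ might fail to lie in $N$ --- does not actually arise, and the machinery you eventually deploy to handle it does not apply. The missing observation is this: since $t\in N$ and $N\in\mathcal{E}^1$, the set $\overline{b}_t=\{u\in T: u\leq_T t\}$ belongs to $N$ and has size at most $\aleph_1$ (as ${\rm ht}(t)<\omega_2$); because $\omega_1\subseteq N$, any element of $N$ of size at most $\aleph_1$ is a subset of $N$, so $\overline{b}_t\subseteq N$. Hence every $s\leq_T t$ lies in $N$ automatically, so every $s\in T\cap M$ with $s\leq_T t$ lies in $N\cap M$, which gives $\eta_M(t)\leq\eta_{N\cap M}(t)$ at once and finishes the proof. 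This is precisely the paper's one-line argument, and the same closure device appears elsewhere (e.g.\ in the proof of \cref{successor or limit}). Your branch ``if $s\in N\cap M$, nothing to do'' is in fact the only branch.

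The fallback you settle on --- invoking \cref{guessing lemma} with the node $t$ to obtain a branch $b^*\in N\cap M$ through $t$ --- is not available: that lemma carries the hypothesis that $t$ is guessed in $M$, which is not among the hypotheses of \cref{delta and intersection} and fails for typical $t\in (T\cap N)\setminus M$. Indeed, the nodes of a fixed level that are guessed in the countable model $M$ are covered by the countably many $T$-branches belonging to $M$, while $T\cap N$ at that level can have size $\aleph_1$. You noticed early on that you could not verify ``$t$ is guessed in $M$'' and then reintroduced that hypothesis implicitly at the end. Even granting $b^*$, the subsequent step (``by elementarity in $N\cap M$ there are nodes of $N\cap M$ in $b^*$ cofinal in ${\rm ht}(s)$'') is asserted rather than proved. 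So the proposal does not close; the correct argument is the one-line cardinality/closure observation above, with no case split and no appeal to guessing.
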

\begin{proof}
Since $N\cap M\subseteq M$, $\eta_{N\cap M}(t)\leq \eta_M(t)$. Assume towards a contradiction that the equality fails. Thus, there is some $s\in M$ whose height is above $\eta_{N\cap M}(t)$ such that
$s\leq_T O_M(t)\leq_T t$. Then $s\in N$ as $\omega_1\cup\{t\}\subseteq N$. Therefore, $s\in N\cap M$, and hence ${\rm ht}(s)\leq \eta_{N\cap M}(t)$, a contradiction.
Since both $O_{N\cap M}(t)$ and $O_M(t)$ are below $t$ and of the same height, they are equal.
\end{proof}

\subsection{The Forcing Construction and its Basic Properties}\label{sub2}\leavevmode

We are now ready to define our  forcing notion $\mathbb P_T$ to specialise $T$ in generic extensions.

\begin{definition}[$\mathbb P_T$]\label{main forcing 2}
A condition in $\mathbb P_T$ is a pair $p=(\mathcal M_p,f_p)$ satisfying the following items.
\begin{enumerate}
    \item\label{1} $\mathcal M_p\in\mathbb M\coloneqq\mathbb M(\mathcal E^0,\mathcal E^1)$.
    \item\label{2} $f_p\in \mathbb S_{\omega}(T)$.
     \item \label{closed under f} For every $M\in\mathcal{E}^0_p$, if $t\in \dom(f_p)\cap M$, then $f_p(t)\in M$.
     \item\label{4} For every  $M\in\mathcal{E}^0_p$  and every  $t\in{\rm dom}(f_p)$ with $f_p(t)\in M$, if $t$ is guessed in $M$, then  $t\in M$.
   \

\end{enumerate}

We say $p$ is stronger than $q$  if and only if the following are satisfied.
\begin{enumerate}
    \item $\mathcal M_p\supseteq \mathcal M_q$.
    \item $f_p\supseteq f_q$.
\end{enumerate}

\end{definition}

Given a condition $p$ in $\mathbb P_T$ and a model $M\in\mathcal{E}^0\cup\mathcal{E}^1$ containing $p$, we define an extension of $p$ that will turn later to be generic for the relevant models.

\begin{definition}
Suppose that $M\in\mathcal{E}^0\cup\mathcal{E}^1$ and
$p\in M\cap\mathbb P_T$. We let $p^M$ be defined by $(\mathcal M^M_p,f_p)$.
\end{definition}
Recall that $\mathcal M_p^M$ is the closure of $\mathcal M_p\cup\{M\}$ under intersections (see \cref{Mtop-side}.)
\begin{proposition}\label{Mtop-P}
Suppose that $M\in\mathcal{E}^0\cup\mathcal{E}^1$ and
$p\in M\cap\mathbb P_T$.
Then $p^M$ is a condition extending $p$ such that $M\in\mathcal M_{p^M}$.
\end{proposition}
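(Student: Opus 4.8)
The plan is to verify the four defining clauses of $\mathbb P_T$ for the pair $p^M=(\mathcal M_p^M,f_p)$, and then check extension and membership. Clause~\ref{1} is exactly \cref{Mtop-side}(3): $\mathcal M_p^M$ is a condition in $\mathbb M$ extending $\mathcal M_p$, with $M\in\mathcal M_{p^M}$. Clause~\ref{2} holds trivially since $f_{p^M}=f_p$ is unchanged. The content of the proposition is therefore in clauses~\ref{closed under f} and~\ref{4}, which concern the new countable models appearing in $\mathcal M_p^M$.

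First I would describe explicitly the new countable models. By \cref{Mtop-side}: if $M\in\mathcal E^1$, then $\mathcal M_p^M=\mathcal M_p\cup\{M\}$, so the only possibly-new model is $M$ itself, which is in $\mathcal E^1$, not $\mathcal E^0$ — hence no new countable models arise and clauses~\ref{closed under f} and~\ref{4} are inherited verbatim from $p$ (using $p\in M$ so that $\mathcal M_p\subseteq M$ is irrelevant here; the point is the family $\mathcal E^0_{p^M}=\mathcal E^0_p$). If $M\in\mathcal E^0$, then $\mathcal M_p^M=\mathcal M_p\cup\{M\}\cup\{N\cap M: N\in\mathcal E^1_p\}$, so the new countable models are $M$ and the intersections $N\cap M$ for $N\in\mathcal E^1_p$ (note $N\in\mathcal E^1_p\subseteq M$ since $p\in M$, so $N\cap M\in\mathcal E^0$ is a genuine model of the right type, and $N\cap M\in N$). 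For the old models in $\mathcal E^0_p$ the two clauses are inherited from $p$, so it remains to check them for $M$ and for each $N\cap M$.

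For the model $M$: clause~\ref{closed under f} asks that $t\in\dom(f_p)\cap M$ implies $f_p(t)\in M$. But $f_p\in M$ (as $p\in M$), so $\dom(f_p)\in M$ and $f_p$ is a function in $M$; hence $t\in M$ gives $f_p(t)\in M$ by elementarity. Clause~\ref{4} for $M$: if $t\in\dom(f_p)$ with $f_p(t)\in M$, I want $t$ guessed in $M$ $\Rightarrow$ $t\in M$. Again $\dom(f_p)\in M$ and $f_p\in M$; since $f_p$ is finite and in $M$, in fact $f_p\subseteq M$, so $t\in\dom(f_p)\subseteq M$ automatically — clause~\ref{4} is vacuously satisfied for $M$. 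For a new model $N\cap M$ with $N\in\mathcal E^1_p$: clause~\ref{closed under f} asks $t\in\dom(f_p)\cap(N\cap M)\Rightarrow f_p(t)\in N\cap M$; since $t\in N$ and $t\in\dom(f_p)$, clause~\ref{closed under f} applied to $N$ (if $N\in\mathcal E^0$) — wait, $N\in\mathcal E^1$, so there is no clause~\ref{closed under f} hypothesis on $N$; instead I argue directly: $f_p$ is finite and $f_p\in M$ so $f_p\subseteq M$, giving $f_p(t)\in M$; and $f_p(t)\in N$ because $f_p\in N$? That need not hold. The correct route: since $f_p$ is finite and $p\in M$, we have $f_p\in M$ and so each value $f_p(t)$ with $t\in M$ lies in $M$; but to get $f_p(t)\in N$ I use that $t\in N$, $f_p\rest\overline b_t$-type information — more cleanly, observe $N\cap M\in\mathcal E^0$ and $N\cap M\subseteq N$; one shows $f_p\rest (N\cap M)$ already ``lives below'' $N$ using that $T\cap N\in N$ and elementarity of $M$ applied to the finite object $f_p$, hence $f_p(t)\in N\cap M$ whenever $t\in \dom(f_p)\cap N\cap M$.

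I expect the main obstacle to be precisely clause~\ref{4} for the models $N\cap M$ ($N\in\mathcal E^1_p$, $M\in\mathcal E^0$): given $t\in\dom(f_p)$ with $f_p(t)\in N\cap M$ and $t$ guessed in $N\cap M$, I must conclude $t\in N\cap M$. The natural strategy is: $f_p(t)\in M$ together with clause~\ref{4} for $M$ in the condition $p$ — but $M\notin\mathcal M_p$, so I cannot invoke it. Instead, I would argue in two steps using the guessing-model machinery of Section~4: first, since $f_p(t)\in N$ and (by clause~\ref{4} for $N$? no, $N\in\mathcal E^1$) — here I think the right tool is that $f_p(t)\in N\cap M\subseteq N$ forces, via elementarity of $N$ and the finiteness of $f_p$, that $t\in N$ (this is where one uses that the value $f_p(t)$ being in $N$ pins down $t$, perhaps through clause~\ref{closed under f}/\ref{4} for $p$ relative to models of $p$ below $N$, or directly: $f_p\in N$ since $f_p$ finite and $\subseteq$ some element of $N$); and second, once $t\in N$, apply \cref{guessing lemma} with this $N$ and this $M$: $t\in T\cap N$ and $t$ guessed in $M$ (since guessed in $N\cap M\subseteq M$) yields $t$ guessed in $N\cap M$, and then combine with $f_p(t)\in M$ and clause~\ref{4} for $M$ in — no. The cleanest correct argument: $t$ guessed in $N\cap M$ implies $t$ guessed in $N$, and since $N\in\mathcal E^1$ and (by the remarks after the $O_M$ notation) $t$ guessed in $N$ means $t\in N$; then $t\in N$ and $t$ guessed in $M$, so \cref{guessing lemma} gives $t$ guessed in $N\cap M$ (already known); finally $t\in N$ plus $f_p(t)\in M$ plus $f_p\in M$ finite gives $t\in\dom(f_p)\subseteq M$, hence $t\in N\cap M$. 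So the key insight is that $f_p$ being finite and an element of $M$ forces $f_p\subseteq M$, which collapses most of the verification; the only real work is confirming $t\in N$ from ``$t$ guessed in $N\cap M$'' via the observation that a node guessed in an $\mathcal E^1$-model belongs to it. I would write the proof in this order: recall the description of $\mathcal M_p^M$ from \cref{Mtop-side}, handle the $\mathcal E^1$ case in one line, then in the $\mathcal E^0$ case observe $f_p\subseteq M$ and $f_p\subseteq N$ for the relevant $N$, dispatch clause~\ref{closed under f} and clause~\ref{4} for $M$ and for each $N\cap M$ using this together with \cref{guessing lemma} and the ``guessed in $\mathcal E^1$-model $\Rightarrow$ member'' fact, and finally note $f_{p^M}=f_p$ so $p^M\leq p$ and $M\in\mathcal M_{p^M}$ by construction.
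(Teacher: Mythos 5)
Your decomposition and the key steps match the paper's proof: clauses \ref{1} and \ref{2} come from \cref{Mtop-side}; when $M\in\mathcal E^1$ no new countable models appear, so \cref{closed under f,4} are inherited from $p$; and when $M\in\mathcal E^0$ the new countable models are $M$ itself (where finiteness of $f_p$ together with $f_p\in M$ gives $f_p\subseteq M$, so \cref{closed under f} holds by elementarity and \cref{4} is vacuous) and the intersections $P\cap M$ for $P\in\mathcal E^1_p$, where \cref{4} follows because a $T$-branch $b\in P\cap M$ satisfies $b\subseteq P$ (hence $t\in P$) while $\dom(f_p)\subseteq M$. This is exactly the paper's case analysis.

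The one place where you do not actually have an argument is \cref{closed under f} for the models $P\cap M$. You correctly observe that $f_p\in P$ need not hold and that $f_p\subseteq M$ only yields $f_p(t)\in M$, but your proposed repair --- that the restriction of $f_p$ to $P\cap M$ ``lives below $P$'' by $T\cap P\in P$ and elementarity of $M$ --- does not establish $f_p(t)\in P$: elementarity of $M$ gives no information about membership in $P$. The correct reason is trivial and has nothing to do with $t$ or with $M$: conditions of $\mathbb S_{\omega}(T)$ take values in $\omega_1$, and every $P\in\mathcal E^1$ is an $\rm IC$-model of size $\aleph_1$ with $\omega_1\subseteq P$, so $f_p(t)\in\omega_1\subseteq P$ and hence $f_p(t)\in P\cap M$. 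This is precisely the one-line justification the paper uses; with that substitution your proof is complete and coincides with the paper's.
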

\begin{proof}
We check \cref{main forcing 2} item by item.
\cref{1}  is essentially \cref{Mtop-side}. \cref{2} is obvious of course.
 To see \cref{closed under f,4} hold true, let $N\in\mathcal{E}^0_{p^M}$. We may assume that $N\notin\mathcal M_p$.
 Therefore, the only interesting case is  $M\in\mathcal E^0$  and  $N=P\cap M$, for some $P\in\mathcal{E}^1_p$. Thus fix such models.\\
 
\textbf{\cref{closed under f}:} Let $t\in{\rm dom}(f_{p^M})\cap N$. We have $f_p(t)\in M$, as  $p\in M$, and also we have $f_p(t)\in P$, as $\omega_1\subseteq P$. Thus $f_p(t)\in P\cap M=N$.\\

\textbf{\cref{4}:}  Let $t\in{\rm dom}(f_p)$ be such that $f_p(t)\in N$.
 If there is a $T$-branch $b\in N$ with $t\in b$, then $t\in P$ (since $b\subseteq P$), and hence
$t\in P\cap M=N$.\\

Finally, by the construction of $p^M$, we have $M\in\mathcal M_{p^M}$, and by  \cref{Mtop-side},   $p^M\leq p$.
\end{proof}

We now define the restriction of a condition to a  model in the side conditions coordinate.

\begin{definition}[restriction]
Suppose that $p\in\mathbb P_T$ and $M\in\mathcal M_p$.
We let the \emph{restriction} of $p$ to $M$ be $p\rest_M=(\mathcal{M}_{p\upharpoonright_M},f_p\rest_M)$, where $f_p\rest_M$ is the restriction of the function $f_p$ to $\dom(f_p)\cap M$.
\end{definition}

Recall that $\mathcal{M}_{p\upharpoonright_M}=\mathcal M_p\cap M$.
Observe that if $M$ is in $\mathcal E^0$, then by \cref{closed under f} of \cref{main forcing 2}, $f_{p\upharpoonright_M}=f_p\cap M$.  This  is trivial for  models in $\mathcal E^1$.

\begin{proposition}\label{rest to M in P}
Suppose that $p\in\mathbb P_T$ and  $M\in\mathcal M_p$. Then $p\rest_M\in \mathbb P_T\cap M$ and $p\leq p\rest_M$.
\end{proposition}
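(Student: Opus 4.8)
The plan is to verify that $p\rest_M=(\mathcal M_{p\upharpoonright_M},f_p\rest_M)$ satisfies the four clauses of \cref{main forcing 2}, that it lies in $M$, and that $p\leq p\rest_M$. The last two points are essentially bookkeeping: $\mathcal M_{p\upharpoonright_M}=\mathcal M_p\cap M$ is a finite subset of $M$ and hence an element of $M$, and similarly $f_p\rest_M=f_p\cap M$ (when $M\in\mathcal E^0$, using \cref{closed under f}) or $f_p\rest_M=f_p$ (when $M\in\mathcal E^1$, since $\omega_1\subseteq M$ forces $\dom(f_p)\subseteq M$ as $f_p$ is finite), so in either case $f_p\rest_M\in M$; and since $\mathcal M_{p\upharpoonright_M}\subseteq\mathcal M_p$ and $f_p\rest_M\subseteq f_p$, we get $p\leq p\rest_M$ once we know both are conditions. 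For clause \cref{1}, that $\mathcal M_{p\upharpoonright_M}=\mathcal M_p\rest_M$ is a condition in $\mathbb M$ is exactly \cref{restriction side} (for $M$ countable, using the description in \cref{rest description side E0}; for $M\in\mathcal E^1$, $p\rest_M=(\varnothing,M)_p$ is an initial segment of the $\in$-chain and visibly closed under intersections). Clause \cref{2} is immediate since $f_p\rest_M\subseteq f_p\in\mathbb S_\omega(T)$ and $\mathbb S_\omega(T)$ is closed under subfunctions.

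The substantive content is clauses \cref{closed under f} and \cref{4}, which must be checked for every $N\in\mathcal E^0_{p\upharpoonright_M}=\mathcal E^0_p\cap M$. Fix such an $N$. For \cref{closed under f}: if $t\in\dom(f_p\rest_M)\cap N$, then in particular $t\in\dom(f_p)\cap N$, so clause \cref{closed under f} applied to $p$ and $N$ gives $f_p(t)\in N$; and then $f_p(t)\in\dom(f_p\rest_M)$-side: since $t\in M$ we also have $f_p(t)\in M$ by \cref{closed under f} for $p$ and $M$ (if $M\in\mathcal E^0$; if $M\in\mathcal E^1$ then $f_p(t)\in M$ because $\omega_1\subseteq M$). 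So $f_p(t)\in N$ as required — note we don't even need the last observation, $f_p(t)\in N$ is the conclusion. For \cref{4}: suppose $t\in\dom(f_p\rest_M)=\dom(f_p)\cap M$ with $f_p(t)\in N$, and suppose $t$ is guessed in $N$; then clause \cref{4} for the condition $p$ and the model $N$ (which is in $\mathcal E^0_p$) immediately yields $t\in N$.

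The one thing to be careful about is that clauses \cref{closed under f,4} for $p\rest_M$ range over $\mathcal E^0_{p\upharpoonright_M}$, and we must confirm this equals $\mathcal E^0_p\cap M$ rather than something larger — but $\mathcal M_{p\upharpoonright_M}=\mathcal M_p\cap M$ by definition, so $\mathcal E^0_{p\upharpoonright_M}=\mathcal E^0_p\cap M$ with no new models introduced (contrast with $p^M$, where intersections $P\cap M$ do get added and genuine work was needed in \cref{Mtop-P}). Thus every $N$ we must test was already in $\mathcal M_p$, and the clauses transfer for free. I expect no real obstacle here; the only mild subtlety is remembering the two regimes $M\in\mathcal E^0$ versus $M\in\mathcal E^1$ when identifying $f_p\rest_M$, which the remark preceding the proposition has already flagged.
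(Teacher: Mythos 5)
Your proof is correct and follows essentially the same route as the paper's: clause (1) from \cref{restriction side}, clause (2) because subfunctions of specialising functions are specialising, clauses (3) and (4) transferring for free because $\mathcal M_{p\upharpoonright_M}\subseteq\mathcal M_p$ and ${\rm dom}(f_{p\upharpoonright_M})\subseteq{\rm dom}(f_p)$, and membership in $M$ because $p\rest_M$ is a finite subset of $M$. One parenthetical claim is false, however: for $M\in\mathcal E^1$ it is \emph{not} the case that $\omega_1\subseteq M$ forces ${\rm dom}(f_p)\subseteq M$, hence not the case that $f_p\rest_M=f_p$. The inclusion $\omega_1\subseteq M$ only guarantees ${\rm ran}(f_p)\subseteq M$; the domain consists of nodes of a tree of height $\omega_2$, and these need not lie in a model of size $\aleph_1$ (indeed the set $\mathscr D(p,M)=\{t\in{\rm dom}(f_p):t\notin M\}$ of \cref{M-support U def}ff.\ is introduced precisely because it is nonempty in general). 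What the remark preceding the proposition means is only that $f_p\rest_M=f_p\cap M$ holds trivially for $M\in\mathcal E^1$, since the values are automatically in $M$. This slip is harmless here — each pair of $f_p\rest_M$ lies in $M$ in either regime, so $f_p\rest_M$ is a finite subset of $M$ and hence an element of $M$ — but the misconception would cause trouble in the $\aleph_2$-preservation argument later.
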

\begin{proof}
We check \cref{main forcing 2} item by item.
By \cref{restriction side}, $\mathcal M_{p\upharpoonright_M}$ is an $\in$-chain and closed under intersections, and hence it is in $\mathbb M$. By \cref{closed under f} of \cref{main forcing 2},  $f_p\cap M$ is in  $\mathbb S_{\omega}(T)$. Observe that $M$ contains  $p\rest_M$,  as it is a finite subset of  $M$.
\cref{closed under f,4} remain valid since all models in $\mathcal M_{p\upharpoonright_M}$ and all nodes in ${\rm dom}(f_{p\upharpoonright_M})$ are, respectively, in $\mathcal M_p$
 and ${\rm dom}(f_p)$.
It is easy to see  that $p$ extends $p\rest_M$.
\end{proof}

\begin{notation}
For a condition $p\in \mathbb P_T$, a model $M\in\mathcal M_p$, and a condition  $q\in M\cap \mathbb P_T$ with
 $q\leq p\rest_M$, we let $p\land q$ denote the pair $(\mathcal M_p\land\mathcal M_q, f_p\cup f_q)$.
\end{notation}
Note that  $p\land q$ is not necessarily  a condition, however we shall use it as a pair of objects.
Notice that $\mathcal M_{p\land q}$ is the closure of $\mathcal M_p\cup\mathcal M_q$ under intersections, and belongs to $\mathbb M$ (see \cref{remark after side condition U-proper} and \cref{remark after side condition C-proper},) and that also
$f_{p\land q}$ is a well-defined function due to the fact that $p$ satisfies \cref{closed under f} of \cref{main forcing 2}.

\begin{lemma}\label{closed under f lemma}
Suppose $p$ is a condition in $\mathbb P_T$ and $M$ is a model in $\mathcal M_p$.
Assume that $q\in M\cap\mathbb P_T$ extends $p\rest_M$. Then $p\land q$ satisfies \cref{closed under f} of \cref{main forcing 2}.
\end{lemma}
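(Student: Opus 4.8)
The plan is to verify \cref{closed under f} of \cref{main forcing 2} for the pair $p\land q$ by checking it at each model $N\in\mathcal E^0_{p\land q}$ and each node $t\in\dom(f_{p\land q})\cap N$, recalling from \cref{remark after side condition U-proper} and \cref{remark after side condition C-proper} that $\mathcal M_{p\land q}$ is the closure of $\mathcal M_p\cup\mathcal M_q$ under intersections, so that a countable model $N$ in $\mathcal M_{p\land q}$ is either in $\mathcal E^0_p$, in $\mathcal E^0_q$, or of the form $P\cap N'$ where $P\in\mathcal E^1_q$, $N'\in\mathcal E^0_p$, and $P\in N'$ (this last case only arising when $M\in\mathcal E^0$). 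Also recall $f_{p\land q}=f_p\cup f_q$, which is a genuine function since $p$ satisfies \cref{closed under f}.

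First I would dispose of the easy cases. If $N\in\mathcal E^0_q$ then, since $q\in M$ is a condition, any $t\in\dom(f_q)\cap N$ has $f_q(t)\in N$; and if $t\in\dom(f_p)\setminus\dom(f_q)$ with $t\in N\subseteq M$, then since $q\leq p\rest_M$ we have $f_p(t)=f_q(t)$ anyway, so nothing new happens. If $N\in\mathcal E^0_p$, then for $t\in\dom(f_p)\cap N$ we get $f_p(t)\in N$ because $p$ is a condition; and for $t\in\dom(f_q)\setminus\dom(f_p)$ with $t\in N$, note $t\in N\in\mathcal M_p$ forces $t\in M$ (as $N\in M$ or $N=M$ — here one uses that $q\in M$ so $\dom(f_q)\subseteq M$, hence $t\in M$), so $t\in\dom(f_q)\cap M$ and applying \cref{closed under f} for $q$ inside $M$ gives $f_q(t)\in M$; combined with $t\in N$ and elementarity of $N$ (which contains $q$ when $N=M$, or contains the relevant data) one concludes $f_q(t)\in N$. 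The genuinely new case is $N=P\cap N'$ with $P\in\mathcal E^1_q$, $N'\in\mathcal E^0_p$, $P\in N'$: given $t\in\dom(f_{p\land q})\cap N$, I would argue that $f_{p\land q}(t)\in P$ because $\omega_1\subseteq P$ and $f_{p\land q}(t)$ is an ordinal below $\omega_1$, and that $f_{p\land q}(t)\in N'$ — if $t\in\dom(f_q)$ this is because $q\in M$ and $t\in N\subseteq M$ forces $t\in\dom(f_q)\cap N'$ (as $N'\in\mathcal M_p$, so $N'\cap M$ absorbs $t$, or rather $t\in N\subseteq N'$ directly) and then $p$'s clause \cref{closed under f} applied at $N'$ gives $f_p(t)\in N'$; if $t\in\dom(f_p)$ then directly $t\in N\subseteq N'$ and $p$'s clause gives $f_p(t)\in N'$. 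Hence $f_{p\land q}(t)\in P\cap N'=N$.

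The main obstacle I expect is the bookkeeping in the mixed case when $M\in\mathcal E^0$ and a node $t$ lies in a newly created intersection model $N=P\cap N'$ but belongs only to $\dom(f_q)$, not to $\dom(f_p)$: one must simultaneously use that $q$ is a condition (to locate $f_q(t)$ inside models of $\mathcal M_q$) and that $p$ is a condition (to locate it inside $N'\in\mathcal M_p$), and then intersect. The key leverage points are that $\omega_1\subseteq P$ for every $P\in\mathcal E^1$ (so values of $f$, being countable ordinals, automatically sit in every guessing model), that $\dom(f_q)\subseteq M$ since $q\in M$, and \cref{closed under f} itself applied to the two conditions $p$ and $q$ separately. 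Once the case analysis is organized as above, each verification is a one-line elementarity or containment argument, and the lemma follows.
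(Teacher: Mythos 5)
Your decomposition of $\mathcal M_{p\land q}$ and your handling of the peripheral cases are fine: if $N\in\mathcal E^0_q$ then $N\subseteq M$ (a countable element of an elementary submodel of $H_\theta$ is a subset of it), so any $t\in\dom(f_{p\land q})\cap N$ lies in $M$ and hence in $\dom(f_q)$, and $q$ being a condition finishes; and the new intersection models $P\cap N'$ reduce to $N'$ because the values of $f_{p\land q}$ are countable ordinals and $\omega_1\subseteq P$. These match the paper's own reductions.

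However, the entire content of the lemma lies in the case you dispatch with ``elementarity of $N$'': a countable $N\in\mathcal M_p\setminus\mathcal M_q$ and $t\in\dom(f_q)\setminus\dom(f_p)$ with $t\in N$. There you correctly observe $t,f_q(t)\in M$ (both because $q\in M$ and $f_q$ is finite, not because of \cref{closed under f} for $q$ at $M$ -- note $M\notin\mathcal M_q$), but nothing about $N$ ``containing the relevant data'' yields $f_q(t)\in N$: the value $f_q(t)$ is an arbitrary countable ordinal chosen inside $M$, and $N$ need not contain $q$, $M$, or that ordinal. This is precisely where the $\in$-chain structure and closure under intersections must be invoked, and it is where the paper spends essentially all of its proof. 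When $M\in\mathcal E^1$, one finds $M'\in\mathcal E^1_p$ with $M\subseteq M'\in N$ and $M'\cap N\in M$ (via \cref{rest description side E0}), so that $M'\cap N\in\mathcal M_p\cap M\subseteq\mathcal M_q$ and $t\in M'\cap N$, and then \cref{closed under f} for $q$ at $M'\cap N$ gives $f_q(t)\in M'\cap N\subseteq N$. When $M\in\mathcal E^0$, the paper runs a minimal-counterexample descent along the chain: a least bad $N\in^* M$ lies in some interval $[P\cap M,P)_p$ with $P\in\mathcal E^1_p\cap M$; either $P\cap M\subseteq N$, in which case $f_q(t)\in P\cap M\subseteq N$ since $f_q(t)\in M$ and $f_q(t)<\omega_1\subseteq P$, or by \cref{smallremark} there is $Q\in\mathcal E^1_p$ with $Q\cap N\in^* P\cap M\in Q\in N$ and $Q\cap N$ is a smaller counterexample; the case $M\in^* N$ is then reduced to this one by another application of \cref{smallremark}. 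Without some such descent argument your proof has a genuine gap -- and your ``main obstacle'' paragraph points at the intersection models $P\cap N'$, which are in fact the one-line part of the argument, rather than at this case.
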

\begin{proof}
Fix $N\in\mathcal E^0_{p\land q}$ and
$t\in{\rm dom}(f_p)\cup{\rm dom}(f_q)$. Assume that $t$ is in $N$. We shall show that $f_{p\land q}(t)\in N$. We split the proof into two cases.\\ 

\textbf{Case 1:} $M$ is in $\mathcal{E}^1$.\newline
  In this case, $\mathcal M_{p\land q}=\mathcal M_p\cup\mathcal M_q$, by \cref{remark after side condition U-proper}.  If $N\in\mathcal M_q$, then $t\in N\subseteq M$, and hence $t\in{\rm dom}(f_q)$. Thus $f_{p\land q}(t)=f_q(t)\in N$.
  Now suppose that $N\in\mathcal M_p\setminus \mathcal M_q$. We may assume  $t\in{\rm dom}(f_q)$. Therefore, in $\mathcal M_p$, we have $M\in^* N$, which in turn implies that there is $M'\in\mathcal{E}^1_p$ such that $M\subseteq M'\in N$ and $M'\cap N\in M$.
Then, $M'\cap N\in\mathcal M_q$ and $t\in M'\cap N$. As $q$ is a condition, we have $f_{p\land q}(t)=f_q(t)\in M'\cap N\subseteq N$.\\

\textbf{Case 2:} $M$ is in $\mathcal{E}^0$.\newline
    Observe that it  is enough to assume $N\in\mathcal M_p\cup\mathcal M_q$: if $N\in\mathcal M_p\land \mathcal M_q$, then $N=P\cap N'$, for some  $P'\in\mathcal M_p\cup\mathcal M_q$, and some  $N'\in\mathcal M_p\cup\mathcal M_q$. By our assumption, $f_{p\land q}(t)$ belongs to $N'$, and hence, $f_{p\land q}(t)\in P'\cap N'=N$, as $\omega_1\subseteq P'$.
     
    As in the previous case, we may assume $t\in{\rm dom}(f_q)$ and $N\in\mathcal M_p\setminus \mathcal M_q$. Let us first assume that $N\in^* M$. Suppose that $N$ is the minimal counter-example with the above properties. Thus  there is $P\in \mathcal{E}^1_p\cap M$ such that
    $N\in [P\cap M,P)_p$.  Now  $P\cap M\nsubseteq N$, as otherwise $f_q(t)\in N$, since $t\in P\in\mathcal M_q$ and $f_q(t)\in P\cap M$. Therefore,  there is some $Q\in N$ such that $Q\cap N\in^* P\cap M\in Q$. 
    Notice that $t\in P$, and hence $t\in P\cap M\subseteq Q$. Thus $t\in Q\cap N$.
    Now $Q\cap N$ is also a counter-example to our claim, since $t\in Q\cap N\subseteq N$, $Q\cap N\in\mathcal M_p\setminus\mathcal M_q$ (as otherwise, we would have $f_q(t)\in Q\cap N\subseteq N$), and $Q\cap N\in^* M$. This contradicts  our minimality assumption.
    
Two cases remain.   The case $N=M$ is trivial, and thus we only need to assume  that $M\in^* N$. If $M\subseteq N$, then $f_q(t)\in N$. And if $M\nsubseteq N$, then  there is some $P\in\mathcal{E}^1_p$ such that $P\cap N\in^* M\in P\in N$ (see \cref{smallremark}.) Notice that $t\in P\cap N$. Thus by the previous paragraph,
   $f_q(t)\in P\cap N\subseteq N$.

\end{proof}

\subsection{Preserving \texorpdfstring{$\aleph_2$}{}}\leavevmode

In this subsection, we prove that $\mathbb P_T$ preserves the regularity of $\aleph_2$. With a similar idea, we shall establish the properness of $\mathbb P_T$ in the subsequent subsection.

\begin{lemma}\label{compatibility lemma 1-U}
Suppose $p$ is a condition in $\mathbb P_T$ and that $M\in\mathcal{E}^1_p$.
Assume that $q\in M$ is a condition extending $p\rest_M$. Then $p\land q$ satisfies \cref{4} of \cref{main forcing 2}.
\end{lemma}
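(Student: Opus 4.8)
The plan is to verify condition \cref{4} of \cref{main forcing 2} for the pair $p\land q$ when $M\in\mathcal E^1_p$. Fix a model $N\in\mathcal E^0_{p\land q}$ and a node $t\in\dom(f_{p\land q})=\dom(f_p)\cup\dom(f_q)$ with $f_{p\land q}(t)\in N$; assuming $t$ is guessed in $N$, I must show $t\in N$. Since $M\in\mathcal E^1$, \cref{remark after side condition U-proper} gives $\mathcal M_{p\land q}=\mathcal M_p\cup\mathcal M_q$, so $N$ is either in $\mathcal M_q$ or in $\mathcal M_p\setminus\mathcal M_q$.

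The first case, $N\in\mathcal M_q$, should be routine: then $N\in M$, and since $q$ is a condition whose side conditions lie in $M$, any relevant node witnessing $f_{p\land q}(t)=f_q(t)\in N$ together with $t$ being guessed in $N$ forces $t\in N$ by \cref{4} applied inside $q$ — one just has to check that $t\in\dom(f_q)$ (if $t\in\dom(f_p)\setminus\dom(f_q)$ with $f_p(t)\in N\subseteq M$, then since $p\rest_M\subseteq q$ and $p$ satisfies \cref{closed under f}, one argues $t\in M$, hence $t\in\dom(f_p)\cap M=\dom(f_q)$ up to the restriction). The substantive case is $N\in\mathcal M_p\setminus\mathcal M_q$, where we may assume $t\in\dom(f_q)\subseteq M$, so in particular $t\in T\cap M$. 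Here the key structural fact is that since $N\notin\mathcal M_q$ but $N\in\mathcal M_p$, and $\mathcal M_q$ contains $\mathcal M_p\rest_M$, we must have $M\in^*N$ in $\mathcal M_p$ (if $N\in^*M$ then $N\in\mathcal M_p\rest_M\subseteq\mathcal M_q$). Because $M\in\mathcal E^1$ is an $\in$-chain element and $M\in^*N$, as in the proof of \cref{closed under f lemma} Case 1 there is $M'\in\mathcal E^1_p$ with $M\subseteq M'\in N$ and $M'\cap N\in\mathcal M_p$, indeed $M'\cap N\in M$ hence $M'\cap N\in\mathcal M_q$.

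The crux is then: since $t\in M\subseteq M'$ and $t$ is guessed in $N$, I want to transfer the guessing down to $M'\cap N$ and then invoke the fact that $q$ satisfies \cref{4}. This is exactly where \cref{guessing lemma} enters — with $M'$ playing the role of the $\mathcal E^1$-model and $N$ the role of the $\mathcal E^0$-model (note $M'\in N$), a $T$-branch $b\in N$ through $t$ yields a $T$-branch in $M'\cap N$ through $t$, so $t$ is guessed in $M'\cap N$. Since $M'\cap N\in\mathcal M_q$, $t\in\dom(f_q)$, and $f_q(t)\in M'\cap N$ (because $f_q(t)\in N$ and $f_q(t)\in M'$ as $\omega_1\subseteq M'$), condition \cref{4} for the condition $q$ gives $t\in M'\cap N\subseteq N$, as desired. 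The main obstacle I anticipate is bookkeeping around which nodes actually lie in $\dom(f_q)$ versus $\dom(f_p)$ and making sure $f_q(t)$ really lands in the small intersection model $M'\cap N$ — this relies on $\omega_1\subseteq M'$ to pull $f_q(t)\in N$ (an ordinal $<\omega_1$, or more precisely an element of $M'$ since $M\subseteq M'$) back into $M'$; one has to be slightly careful that $f_q(t)$, being the color assigned, is an element of $M$ hence of $M'$, so $f_q(t)\in M'\cap N$ follows. Once these membership checks are in place, the proof is a direct application of \cref{guessing lemma} together with the fact that $q$ is already a condition.
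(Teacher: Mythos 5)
Your overall approach is the same as the paper's: split on whether $N\in\mathcal M_q$ or $N\in\mathcal M_p\setminus\mathcal M_q$, and in the substantive case interpose a model $M'\in\mathcal E^1_p$ with $M\subseteq M'\in N$ and $M'\cap N\in M\cap\mathcal M_p\subseteq\mathcal M_q$, apply \cref{guessing lemma} to push the guessing of $t$ down to $M'\cap N$, check $f_q(t)\in M'\cap N$, and invoke \cref{4} of \cref{main forcing 2} for $q$. That case is handled correctly and is essentially identical to the paper's argument.

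The one defective step is your justification of $t\in M$ in the first case ($t\in\dom(f_p)\setminus\dom(f_q)$, $N\in\mathcal M_q$). You cite \cref{closed under f}, but that clause is the implication ``$t\in\dom(f_p)\cap M\Rightarrow f_p(t)\in M$''; it cannot be run in reverse to get $t\in M$ from $f_p(t)\in N\subseteq M$, and \cref{4} is not available either since it only speaks of models in $\mathcal E^0_p$ while $M\in\mathcal E^1_p$ (and $N\notin\mathcal M_p$ in this case). The correct argument — the one the paper uses — is the guessing hypothesis itself: there is a $T$-branch $b\in N\subseteq M$ with $t\in b$; since $T$ has no cofinal branches, $b$ has size at most $\aleph_1$, and as $M$ is an $\rm IC$-model with $\omega_1\subseteq M$ and $b\in M$, we get $b\subseteq M$, hence $t\in M$. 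From there your conclusion ($t\in\dom(f_p)\cap M\subseteq\dom(f_q)$, $f_q(t)\in N$, and \cref{4} for $q$ gives $t\in N$) goes through. So the gap is local and easily repaired, but as written the cited mechanism does not deliver the needed membership.
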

\begin{proof}
Set $r=p\land q$.
Notice that $f_r$ is well-defined as a function. Now fix $t\in{\rm dom}(f_r)$ and $N\in\mathcal E^0\cap \mathcal M_r$ so that $f_r(t)\in N$. We shall show that if $t$ is guessed in $N$, then $t\in N$. Notice that by \cref{remark after side condition U-proper},  we have $\mathcal M_r=\mathcal M_p\cup\mathcal M_q$.
We shall consider the nontrivial cases:\\

\textbf{Case 1:} $t\in{\rm dom}(f_p)$ and $N\in\mathcal M_q\setminus \mathcal M_p$.

Assume that $t$ is guessed in $N$. Thus there is a $T$-branch $b\in N\subseteq M$ with $t\in b$. As $b$ is of size $\leq\!\aleph_1$ and $\omega_1\subseteq M$, we have  $t\in b\subseteq M$. Thus $t\in M$, which in turn implies that $t\in {\rm dom}(f_q)$ and $f_q(t)=f_p(t)=f_r(t)\in N$. But then $t\in N$, as $q$ is a condition.\\

\textbf{Case 2:}
$t\in{\rm dom}(f_q)\setminus {\rm dom}(f_p)$ and $N\in\mathcal M_p\setminus \mathcal M_q$.

   In this situation, $N$ is not in $M$ since $\mathcal M_q\supseteq \mathcal M_p\cap M$, and hence there is some  $M'\in\mathcal E^1_p$ with $M\subseteq M'\in N$ such that $M'\cap N\in M$. Note that $t\in M'$. Assume that $t$ is guessed in $N$. By \cref{guessing lemma}, $t$ is guessed in $M'\cap N$. On the one hand, $f_q(t)=f_r(t) $ belongs to $ M'\cap N$, and that $M'\cap N\in M\cap\mathcal M_p\subseteq\mathcal M_q$. Since $q$ is a condition, we have $t\in M'\cap N\subseteq N$.

Thus far, we have shown that $p\land q$ satisfies all items in \cref{main forcing 2}, possibly except \cref{2}.
We shall show that there are situations $p\land q$ is indeed a condition. We now prepare the ground for this.
\end{proof}

\begin{definition}
 For a conditions $p\in\mathbb P_T$ and a model $M\in\mathcal{E}^1_p$,
  we let 
  $$\mathscr{D}(p,M)=\{t\in{\rm dom}(f_p): t\notin M\}.$$
\end{definition}

\begin{definition}[$M$-support]\label{M-support U def}
  Suppose $p$ is a condition in $\mathbb P_T$ and that $M\in\mathcal{E}^1_p$.
  We say that a function $\sigma:\mathscr{D}(p,M)\rightarrow T\cap M$  is an \emph{$M$-support} for $p$ if the following hold, for every $t\in\dom(\sigma)$.

 \begin{enumerate}
     \item If $O_M(t)\in M$, then  $\sigma(t)=O_M(t)$.
     \item  If $O_M(t)\notin M$, then   $\sigma(t)<_T O_M(t)$ is such that there is no node in ${\rm dom}(f_p)$ whose height belongs to the interval $\big[{\rm ht}(\sigma(t)),\eta_M(t)\big)$.
 \end{enumerate}

\end{definition}

\begin{lemma}\label{existnece M-suppor U}
Suppose $p$ is a condition in $\mathbb P_T$.
Assume that $M\in\mathcal{E}^1_p$. Then, there is an $M$-support  $\sigma $ for $p$.
\end{lemma}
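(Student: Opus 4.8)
The plan is to construct $\sigma$ pointwise on the finite set $\mathscr{D}(p,M)$, using \cref{successor or limit} and \cref{successor or limit-cor} to handle the nodes whose overlap node already lies in $M$, and a separate argument for the remaining ones. First I would recall from \cref{successor or limit-cor} that $\eta_M(t)\in M$ for every $t$, and from \cref{eta in M}(1) together with the definition of $\mathscr D(p,M)$ that, since $t\notin M$, the node $t$ is \emph{not} guessed in $M$ whenever $\eta_M(t)\in M$; in particular, for $t\in\mathscr D(p,M)$ either $O_M(t)\in M$ (the ``easy'' case) or $O_M(t)\notin M$ (the ``hard'' case). In the easy case I simply set $\sigma(t)=O_M(t)$, which is in $T\cap M$ by hypothesis and trivially satisfies clause (1).

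For the hard case, fix $t\in\mathscr D(p,M)$ with $O_M(t)\notin M$. By \cref{successor overlap} the ordinal $\eta_M(t)$ cannot be a successor, so it is a limit ordinal, and by \cref{successor or limit} its cofinality must be $\omega$ (otherwise $O_M(t)\in M$). Thus there is a strictly increasing sequence of ordinals in $M\cap\eta_M(t)$ cofinal in $\eta_M(t)$, and correspondingly, using that $T$ is rooted and $t$ has a node of each height below $\eta_M(t)$ on $b_t$, a sequence of nodes $s_n<_T O_M(t)$ with ${\rm ht}(s_n)\in M$ and $\sup_n{\rm ht}(s_n)=\eta_M(t)$; each such $s_n$ is the unique node of its height below $t$, hence below $O_M(t)$, and by elementarity $s_n\in T\cap M$ (its height is in $M$ and it is determined as $O_M(t')$ for any/every $t'\in T\cap M$ of height ${\rm ht}(s_n)+1$ below $t$ — more directly, $s_n\in M$ because it is definable in $M$ from the ordinal ${\rm ht}(s_n)$ and from witnesses in $M$ that there are nodes of that height that extend to height $\eta_M(t)$; one unwinds this exactly as in \cref{guessing lemma}). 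Now since ${\rm dom}(f_p)$ is finite, the set of heights $\{{\rm ht}(v): v\in{\rm dom}(f_p)\}$ is finite, so it is bounded away from $\eta_M(t)$ inside $\eta_M(t)$: there is $n$ large enough that no node of ${\rm dom}(f_p)$ has height in $\big[{\rm ht}(s_n),\eta_M(t)\big)$. Set $\sigma(t)=s_n$ for such an $n$; then $\sigma(t)<_T O_M(t)$, $\sigma(t)\in T\cap M$, and clause (2) holds.

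Doing this for each $t\in\mathscr D(p,M)$ — which is legitimate since $\mathscr D(p,M)$ is finite, so no global coherence or choice issue arises — yields a function $\sigma:\mathscr D(p,M)\to T\cap M$ satisfying both clauses of \cref{M-support U def}, which is exactly an $M$-support for $p$.

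The main obstacle I anticipate is the verification in the hard case that the intermediate nodes $s_n$ genuinely lie in $M$: one must argue carefully that a node of $b_t$ of a height $\xi\in M$ below $\eta_M(t)$ is in $M$, even though $t\notin M$. This is not automatic — it uses elementarity of $M$ applied to a statement witnessed inside $M$ (e.g.\ ``there exists a node of height $\xi$ lying below some node of height $\xi'$'' for suitable $\xi<\xi'<\eta_M(t)$ with $\xi,\xi'\in M$, combined with the fact that below $t$ every height is realized exactly once because $T$ is a tree), so it essentially re-runs the mechanism of \cref{guessing lemma}/\cref{successor or limit} at the level of individual nodes. The only other point requiring a word of care is confirming that $\eta_M(t)$ has countable cofinality in the hard case, but that is immediate from \cref{successor overlap} and \cref{successor or limit}. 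Everything else is bookkeeping over the finite set $\mathscr D(p,M)$.
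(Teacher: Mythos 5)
Your proof is correct and follows essentially the same route as the paper: set $\sigma(t)=O_M(t)$ when $O_M(t)\in M$, and otherwise use that $\eta_M(t)$ is a limit ordinal (by \cref{successor overlap}) together with the finiteness of ${\rm dom}(f_p)$ to choose $\sigma(t)$ below $O_M(t)$ avoiding the forbidden interval of heights. The only remark is that your elementarity detour for showing $\sigma(t)\in M$ is unnecessary: since $\eta_M(t)=\sup\{{\rm ht}(s):s\in T\cap M,\ s\leq_T t\}$ is a limit not attained in $M$, the nodes of $T\cap M$ below $t$ (hence below $O_M(t)$) already have heights cofinal in $\eta_M(t)$ by definition, so one of them can be taken as $\sigma(t)$ directly.
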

\begin{proof}
Fix $p\in\mathbb P_T$.
It is enough to define $\sigma$ for  $t\in\mathscr D(p,M) $ with $O_M(t)\notin M$. Thus fix such a $t$.
Notice that   ${\rm dom}(f_p)$ is finite, and that, by \cref{successor overlap},  $\eta_M(t)$ is a limit ordinal. Thus
one may easily find a node $\sigma(t)$ with the above properties.
\end{proof}

\begin{definition}[$M$-reflection]\label{def M-reflection U}
  Suppose that $p\in\mathbb P_T$ and  $M\in\mathcal{E}^1_p$.
  A condition $q$ is called an \emph{$(M,\sigma)$-reflection} of $p$, where $\sigma$ is  an $M$-support   for $p$, if the following properties are satisfied.
  \begin{enumerate}
  \item\label{p1-U-reflection} $q\leq p\rest_M$.
  \item For every $t\in \dom(\sigma)$, the following hold:
  \begin{enumerate}
      
  \item\label{p2-U-reflection} There is no node in ${\rm dom}(f_q)$ whose height is the interval $\big[{\rm ht}(\sigma(t)),\eta_M(t)\big)$.
     \item\label{p3-U-reflection} For every $s\in{\rm dom}(f_q)$, if $s<_T \sigma(t)$,  then
 $f_q(s)\neq f_p(t)$.
 \end{enumerate}
  \end{enumerate}
  
  Let $R_p(M,\sigma)$ be the set of $(M,\sigma)$-reflections of $p$ with support $\sigma$.
\end{definition}

\begin{remark}
Notice that if $M^*\prec H_{\theta^*}$, for some sufficiently large regular cardinal $\theta^*$, which contains $T$ and $H_{\theta}$,
and that $p$ is a condition in $\mathbb P_T$ with $M\coloneqq M^*\cap H_{\theta}\in\mathcal{E}^1_p $, then $R_p(M,\sigma)\in M^*$, whenever $\sigma$ is an $M$-support for $p$. 
\end{remark}

\begin{lemma}\label{Mreflection-Ulemma}
Let $p\in\mathbb P_T$. Assume that  $M\in\mathcal{E}^1_p$, and let $\sigma$ be an $M$-support  for $p$. Then   $p\in R_p(M,\sigma)$.
\end{lemma}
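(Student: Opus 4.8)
The statement asserts that $p$ is its own $(M,\sigma)$-reflection, so the plan is simply to verify the three defining clauses of \cref{def M-reflection U} for the condition $q=p$. This should be essentially a matter of unwinding definitions, with the only real content coming from clause \ref{p3-U-reflection} and its interaction with the hypothesis that $f_p\in\mathbb S_\omega(T)$.

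\textbf{Clause \ref{p1-U-reflection}.} I need $p\leq p\rest_M$. This is immediate from \cref{rest to M in P}, which states precisely that $p\rest_M\in\mathbb P_T\cap M$ and $p\leq p\rest_M$.

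\textbf{Clause \ref{p2-U-reflection}.} Fix $t\in\dom(\sigma)$; I must show no node in $\dom(f_p)$ has height in the interval $\big[{\rm ht}(\sigma(t)),\eta_M(t)\big)$. There are two cases according to \cref{M-support U def}. If $O_M(t)\in M$, then $\sigma(t)=O_M(t)$, so ${\rm ht}(\sigma(t))=\eta_M(t)$ and the interval is empty, making the condition vacuous. If $O_M(t)\notin M$, then by the definition of an $M$-support the node $\sigma(t)$ was chosen precisely so that there is no node in $\dom(f_p)$ whose height lies in $\big[{\rm ht}(\sigma(t)),\eta_M(t)\big)$, which is exactly what is required.

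\textbf{Clause \ref{p3-U-reflection}.} Fix $t\in\dom(\sigma)$ and $s\in\dom(f_p)$ with $s<_T\sigma(t)$; I must show $f_p(s)\neq f_p(t)$. Since $\sigma(t)\leq_T O_M(t)\leq_T t$, we have $s<_T t$, so $s$ and $t$ are comparable nodes in $\dom(f_p)$; because $f_p\in\mathbb S_\omega(T)$, the specialising property forces $f_p(s)\neq f_p(t)$. (In the subcase $O_M(t)\in M$ we have $\sigma(t)=O_M(t)$, and $s<_T O_M(t)\leq_T t$ still gives $s<_T t$, so the same argument applies; in the subcase $O_M(t)\notin M$ we have $\sigma(t)<_T O_M(t)\leq_T t$ and again $s<_T t$.) This completes the verification that $p\in R_p(M,\sigma)$. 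The main ``obstacle'' is really only keeping the two support cases straight; there is no genuine difficulty, since every clause either is vacuous, is built into the choice of $\sigma$, or follows directly from $f_p$ being a partial specialising function.
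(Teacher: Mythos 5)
Your proof is correct and follows the same route as the paper's: clause (1) is \cref{rest to M in P}, clause (2) is built into the definition of the $M$-support (vacuous when $O_M(t)\in M$ since then ${\rm ht}(\sigma(t))=\eta_M(t)$), and clause (3) follows from $s<_T\sigma(t)\leq_T t$ together with $f_p$ being a partial specialising function. The paper's proof is just a terser version of exactly this verification.
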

\begin{proof}
We check the items in \cref{def M-reflection U}.
\cref{p1-U-reflection} is essentially \cref{rest to M in P}.
\cref{p2-U-reflection} follows from the definition of $\sigma$. \cref{p3-U-reflection} follows from the fact that $p$ is a condition, and that $\sigma(t)<_T t$. 
\end{proof} 

\begin{lemma}\label{claim1- gen U}
Suppose  $p$ is a condition in $\mathbb P_T$. Let  $M\in\mathcal{E}^1_p$, and let
$q\in M$ be an $(M,\sigma)$-reflection of $p$, for some $M$-support $\sigma$ for $p$. Let $r=p\land q$. Then  $f_r\in B_{\omega}(T)$.
\end{lemma}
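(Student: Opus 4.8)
The plan is to show that $f_r = f_p \cup f_q$ is a legitimate specialising function, i.e.\ that it is an injection on each $<_T$-chain in its domain. Since both $f_p$ and $f_q$ are individually in $\mathbb S_\omega(T)$, the only way $f_r$ could fail is if there are $s \in \dom(f_p)$ and $s' \in \dom(f_q)$ with $s <_T s'$ (or $s' <_T s$) and $f_p(s) = f_q(s')$. So I would fix such a hypothetical pair and derive a contradiction. Note that $f_r$ is well-defined as a function because $p$ satisfies item~\ref{closed under f} of \cref{main forcing 2}, so conflicting values on $\dom(f_p)\cap\dom(f_q)$ are not an issue; only the order-preservation (specialisation) condition is at stake.

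First I would dispose of the easy sub-cases. If both $s,s'$ lie in $\dom(f_q)$ or both lie in $\dom(f_p)$, there is nothing to prove. So assume $s' \in \dom(f_q) \subseteq M$ and $s \in \dom(f_p) \setminus M = \mathscr D(p,M)$ (the other configuration, with the node outside $M$ being the $q$-node, is symmetric, but actually here the asymmetry matters: $q \in M$ so $\dom(f_q) \subseteq M$, hence whichever of $s,s'$ is the $q$-node lies in $M$). Now I look at where the $T$-chain between $s$ and $s'$ sits relative to $M$. The key objects are $\eta_M(s)$, $O_M(s)$, and the $M$-support value $\sigma(s)$. Since $s \notin M$, we have $s \in \mathscr D(p,M)$, so $\sigma(s)$ is defined. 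I would split according to whether $O_M(s) \in M$ or not, mirroring \cref{def M-reflection U}.

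The heart of the argument: suppose $s' <_T s$ with $s' \in \dom(f_q) \subseteq M$. Then ${\rm ht}(s') < {\rm ht}(s)$ and $s' \leq_T O_M(s)$, so ${\rm ht}(s') \le \eta_M(s)$. If $O_M(s) \in M$, then $\sigma(s) = O_M(s) \in M$; here $s'<_T O_M(s) = \sigma(s)$ and $s' \in \dom(f_q)$, so clause~\ref{p3-U-reflection} of the $(M,\sigma)$-reflection property gives $f_q(s') \neq f_p(s)$ directly. If $O_M(s) \notin M$, then $\sigma(s) <_T O_M(s)$ and, by clause~\ref{p2-U-reflection}, no node of $\dom(f_q)$ has height in $\big[{\rm ht}(\sigma(s)),\eta_M(s)\big)$; since ${\rm ht}(s') \le \eta_M(s)$ and $s'$ lies on the chain below $O_M(s)$, this forces ${\rm ht}(s') < {\rm ht}(\sigma(s))$, hence $s' <_T \sigma(s)$, and again clause~\ref{p3-U-reflection} yields $f_q(s') \neq f_p(s)$. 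The remaining case is $s <_T s'$ with $s \in \dom(f_p)\setminus M$, $s' \in \dom(f_q) \subseteq M$: but then $s <_T s'$ with $s' \in M$ and $\overline b_{s'} \in M$ forces $s$ to be guessed in $M$, so by \cref{4} of \cref{main forcing 2} applied in $M$ (using that $f_p(s) \in M$, which holds since... ) — wait, here I must be careful, as $f_p(s)$ need not lie in $M$ a priori. Instead I would argue: $s <_T s'$, $s' \in M$, so $s$ is guessed in $M$; also $O_M(s)$ satisfies $s' \ge_T O_M(s)$, and in fact $s \le_T s'$ with $s'\in M$ means ${\rm ht}(s) \le \eta_M(s)$ with $O_M(s)$ lying weakly below $s'$; since the chain $\overline b_{s'}\cap M$ reaches height ${\rm ht}(s')\ge{\rm ht}(s)$, actually $\eta_M(s) \ge {\rm ht}(s)$ forces $O_M(s) = s$, and by \cref{eta in M} (first item, if $\eta_M(s)=\mathrm{ht}(s)\in M$) or \cref{successor or limit}/\cref{successor or limit-cor} we'd get $s \in M$, contradicting $s \notin M$. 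I expect this last case — pinning down why the node outside $M$ cannot sit \emph{below} a $q$-node — to be the main obstacle, and the clean way through it is precisely the observation that $s \leq_T s' \in M$ makes $s$ guessed in $M$ and drives $\eta_M(s) \ge {\rm ht}(s)$, whence $O_M(s)=s$ and the guessing hypotheses collapse $s$ into $M$. Once all cases close, $f_r$ has no monochromatic comparable pair, so $f_r \in \mathbb S_\omega(T) = B_\omega(T)$.
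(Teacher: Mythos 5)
Your proof is correct and follows essentially the same route as the paper's: reduce to a mixed comparable pair, rule out the $\dom(f_p)$-node sitting below the $\dom(f_q)$-node by showing it would then lie in $M$ (the paper does this in one line, since $\overline{b}_{s'}\in M$, $|\overline{b}_{s'}|\leq\aleph_1$ and $\omega_1\subseteq M$ already give $s\in\overline{b}_{s'}\subseteq M$, so your detour through $\eta_M$, $O_M$ and the guessing lemmas is valid but unnecessary), and then use clauses \ref{p2-U-reflection} and \ref{p3-U-reflection} of the $(M,\sigma)$-reflection to separate the colours when the $\dom(f_q)$-node lies below. Your explicit split on whether $O_M(s)\in M$ is treated uniformly in the paper, because in that case $\sigma(s)=O_M(s)$ and the interval $\big[{\rm ht}(\sigma(s)),\eta_M(s)\big)$ is empty, so the two sub-cases collapse into one.
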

\begin{proof}
Since $q\leq p\rest_M$, $f_r$ is well-defined as a function.
We shall show that it satisfies the specialising property. To do this, we only discuss the  nontrivial case by considering
two arbitrary comparable nodes $t\in{\rm dom}(f_p)\setminus{\rm dom}(f_q)$ and $s\in{\rm dom}(f_q)\setminus {\rm dom}(f_p)$. 
We claim that $f_r(t)\neq f_r(s)$. Observe that $s\in M$. The fact that $M\cap\omega_2$ is an ordinal imply that if $t\leq _T s$, then $t\in M$, which is a contradiction as $t\notin\dom(f_q)$. Thus, the only possibility is $s<_T t$.  Since $q\in R_p(M,\sigma)\cap M$, the height of $s$ is not in the interval $\big[{\rm ht}(\sigma(t)),\eta_M(t)\big)$. Thus  $s<_T\sigma(t)$. Then \cref{p3-U-reflection} of \cref{def M-reflection U} implies that
  $f_q(s)\neq f_p(t)$. Therefore, $f_r(t)\neq f_r(s)$.
\end{proof}
We have now all the necessary tools to prove the preservation of $\aleph_2$ by $\mathbb P_T$.
\begin{lemma}\label{genericity-U}
Suppose $p$ is a condition in $\mathbb P_T$. Assume that $\theta^*$ is a sufficiently large regular cardinal, and that $M^*\prec H_{\theta^*}$ contains the relevant objects. Suppose that $M\coloneqq M^*\cap H_{\theta}$ is in $\mathcal{E}^1_p$.
Then, $p$ is $(M^*,\mathbb P_T)$-generic.
\end{lemma}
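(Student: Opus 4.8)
The plan is to prove that $p$ is $(M^*,\mathbb{P}_T)$-generic by the usual density argument: given a dense set $D\in M^*$ and an extension $q\leq p$ with $q\in D$, I must find $\bar{q}\in D\cap M^*$ compatible with $q$. The natural candidate is a suitable restriction of $q$ to $M$ (or to $M^*$). First I would fix an $M$-support $\sigma$ for $q\restriction_M$-relevant data; more precisely, since $M=M^*\cap H_\theta\in\mathcal{E}^1_q$ (because $M\in\mathcal{E}^1_p$ and $q\leq p$), apply \cref{existnece M-suppor U} to get an $M$-support $\sigma$ for $q$, and note $R_q(M,\sigma)\in M^*$ by the remark following \cref{def M-reflection U}, and $q\in R_q(M,\sigma)$ by \cref{Mreflection-Ulemma}. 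Then I would consider $q\restriction_M$, which by \cref{rest to M in P} lies in $\mathbb{P}_T\cap M\subseteq M^*$, and look at the set
\[
E=\{\,\bar{q}\in R_q(M,\sigma)\cap D:\ \bar{q}\leq q\restriction_M\,\}.
\]
This set is definable from parameters in $M^*$ (namely $D$, $q\restriction_M$, $\sigma$, and the combinatorial clauses of \cref{def M-reflection U}, all of which live in $M^*$), so $E\in M^*$; and $q$ itself witnesses $q\in E$ — strictly speaking $q\notin M^*$, but $q$ witnesses that the statement ``there is $\bar q\in R_{q\restriction_M}(M,\sigma)\cap D$ below $q\restriction_M$'' — wait, $R$ depends on $q$ not just $q\restriction_M$; the cleaner route is to reflect through $M^*$: the statement ``$\exists\bar q\,(\bar q\in D\wedge \bar q\le q\restriction_M\wedge \bar q$ has the reflection properties relative to the finite data $\langle\sigma(t),\eta_M(t),f_q(t):t\in\dom\sigma\rangle)$'' has all its parameters in $M^*$ (the finitely many ordinals $\eta_M(t)$ and nodes $\sigma(t)\in M$ and colours $f_q(t)$ are in $M$, hence in $M^*$), and is witnessed by $q$, so by elementarity it is witnessed by some $\bar{q}\in M^*$.

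Second, having produced $\bar q\in D\cap M^*$ which is an $(M,\sigma)$-reflection of $q$ with $\bar q\le q\restriction_M=q\restriction_{M^*}$, I would form $r=q\wedge\bar q=(\mathcal{M}_q\wedge\mathcal{M}_{\bar q},\,f_q\cup f_{\bar q})$ and verify $r\in\mathbb{P}_T$, which gives a common extension of $q$ and $\bar q$ and finishes the proof. Clause \cref{1} of \cref{main forcing 2} holds by \cref{side condition U-proper} / \cref{remark after side condition U-proper} (here $\mathcal{M}_r=\mathcal{M}_q\cup\mathcal{M}_{\bar q}$ since $M\in\mathcal{E}^1$). Clause \cref{2}, i.e. $f_r\in\mathbb{S}_\omega(T)$, is exactly \cref{claim1- gen U} applied to $q$, $M$, $\bar q$. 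Clause \cref{closed under f} is \cref{closed under f lemma}, and clause \cref{4} is \cref{compatibility lemma 1-U}. Since all four clauses hold, $r$ is a genuine condition extending both $q$ and $\bar q$, so $\bar q$ is compatible with $q$; as $\bar q\in D\cap M^*$ was arbitrary-dense-set-driven, $D\cap M^*$ is predense below $p$, and as $D$ was arbitrary, $p$ is $(M^*,\mathbb{P}_T)$-generic.

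The main obstacle — the step that carries all the content — is setting up the reflection correctly so that the resulting $\bar q$ is actually compatible with $q$, i.e. ensuring that the $M$-support and the reflection clauses \cref{p2-U-reflection}, \cref{p3-U-reflection} encode \emph{exactly} the finitely much information about $q$ outside $M$ that is needed to run \cref{claim1- gen U}, \cref{closed under f lemma}, and \cref{compatibility lemma 1-U}. Concretely: the colours $f_q(t)$ for $t\in\mathscr{D}(q,M)$ and the projections $\sigma(t)\in M$ of the nodes $t\notin M$ along $b_t$, together with the "no node of $\dom(f_q)$ has height in $[\mathrm{ht}(\sigma(t)),\eta_M(t))$" clause, are what guarantee that when we glue $f_q$ and $f_{\bar q}$ no clash of colours arises between a node $t\in\dom(f_q)\setminus M$ and a node $s\in\dom(f_{\bar q})\setminus\dom(f_q)$ (which necessarily satisfies $s<_T t$ and, by the height clause, $s<_T\sigma(t)$, whence \cref{p3-U-reflection} applies). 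Everything else is bookkeeping already done in the cited lemmas; the only genuinely new point here is to observe that $R_q(M,\sigma)$, the restriction $q\restriction_M$, and $D$ are all in $M^*$ and that $q$ witnesses non-emptiness of the relevant reflected set, so elementarity delivers $\bar q\in M^*$.
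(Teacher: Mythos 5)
Your proposal is correct and follows essentially the same route as the paper: reduce to $q\leq p$ lying in $D$, fix an $M$-support $\sigma$ for $q$, observe that $R_q(M,\sigma)$ is definable from the finitely many parameters $q\restriction_M$, $\sigma(t)$, $\eta_M(t)$, $f_q(t)$ (all in $M\subseteq M^*$) so that elementarity yields $\bar q\in D\cap R_q(M,\sigma)\cap M^*$, and then verify the four clauses of \cref{main forcing 2} for $q\wedge\bar q$ via \cref{side condition U-proper}, \cref{claim1- gen U}, \cref{closed under f lemma}, and \cref{compatibility lemma 1-U}, exactly as the paper does. Your explicit justification that the reflected set lies in $M^*$ despite $q\notin M^*$ is precisely the content of the remark following \cref{def M-reflection U}.
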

\begin{proof}
Fix $p'\leq p$. Then $M\in\mathcal M_{p'}$. Thus we may assume that $p=p'$.
Let $D\in M^*$ be a dense subset of $\mathbb P_T$.
We may also assume that $p\in D$. By  \cref{existnece M-suppor U,Mreflection-Ulemma}, there exists an $M$-support  of  $p$, say $\sigma$, such that $p\in R_p(M,\sigma)$. 
Notice that $R_p(M,\sigma)$ is in $M^*$. Thus by elementarity, there is some $q\in D\cap R_p(M,\sigma)\cap M$.
Set $r=p\land q$. 
 Now, \cref{side condition U-proper,claim1- gen U,closed under f lemma,compatibility lemma 1-U} imply  that $r$ satisfies \cref{1,2,closed under f,4} of  \cref{main forcing 2}, respectively.
It is clear that $p\land q$ extends both $p$ and $q$.

\end{proof}

\begin{corollary}\label{aleph2 preserv}
Assume ${\rm GM}^*(\omega_2)$. Then $\mathbb P_T$ preserves $\aleph_2$.
\end{corollary}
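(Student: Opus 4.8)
The plan is to derive Corollary~\ref{aleph2 preserv} directly from Lemma~\ref{genericity-U} together with the abstract preservation machinery established in Section~\ref{sec2}. By Lemma~\ref{genericity-U}, whenever $\theta^*$ is a sufficiently large regular cardinal and $M^*\prec H_{\theta^*}$ contains the relevant objects (in particular $T$, $H_\theta$, $\mathbb P_T$, and the parameters used to define $\mathcal E^0,\mathcal E^1$) and $M\coloneqq M^*\cap H_\theta$ happens to lie in $\mathcal E^1$, then \emph{every} condition $p$ with $M\in\mathcal E^1_p$ is $(M^*,\mathbb P_T)$-generic; in particular, for any $p\in\mathbb P_T\cap M^*$, the condition $p^M$ (which is a condition extending $p$ with $M\in\mathcal M_{p^M}$ by Proposition~\ref{Mtop-P}) is $(M^*,\mathbb P_T)$-generic. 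So the only thing I need is that there are stationarily many such $M^*$, i.e.\ that the collection of $M^*\prec H_{\theta^*}$ with $M^*\cap\omega_2\in\omega_2$ and $M^*\cap H_\theta\in\mathcal E^1$ is stationary in $\mathcal P_{\omega_2}(H_{\theta^*})$.

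First I would fix a sufficiently large regular $\theta^*$ and let $\mathcal S$ be the set of $M^*\prec H_{\theta^*}$ of size $\aleph_1$ with $M^*\cap\omega_2\in\omega_2$, $T\in M^*$, $H_\theta\in M^*$, and such that $N\coloneqq M^*\cap H_\theta$ is an $\omega_1$-guessing internally club elementary submodel of $(H_\theta,\in,T)$ — that is, $N\in\mathcal E^1$. The assumption ${\rm GM}^*(\omega_2)$, applied at the cardinal $\theta$, tells us the $\omega_1$-guessing $\rm IC$-submodels of $(H_\theta,\in,T)$ are stationary in $\mathcal P_{\omega_2}(H_\theta)$; a routine reflection/lifting argument (pulling a club-guessing function on $H_{\theta^*}$ down to $H_\theta$, or equivalently noting that intersecting a stationary-many-guessing-models situation at $H_{\theta^*}$ with $H_\theta$ preserves the guessing property since $H_\theta\in M^*$ and being $\omega_1$-guessing is expressible) shows $\mathcal S$ is stationary in $\mathcal P_{\omega_2}(H_{\theta^*})$. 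Here I would invoke the standard fact that if $M^*\prec H_{\theta^*}$ with $H_\theta\in M^*$ then $M^*\cap H_\theta\prec H_\theta$, and that the $\rm IC$ and $\omega_1$-guessing properties transfer; the closure of $M^*$ under $\omega$-sequences is not needed, only that $M^*$ is internally club, which one arranges by taking $M^*$ to be the union of a continuous $\in$-chain of countable submodels — exactly as in the definition of $\mathcal E^1$.

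Next, given this stationary $\mathcal S$, Lemma~\ref{genericity-U} says $\mathbb P_T$ is proper for $\mathcal S$ in the sense of the definition preceding Lemma~\ref{preserv-byS-proper}: for every $M^*\in\mathcal S$ and every $p\in\mathbb P_T\cap M^*$, the condition $q\coloneqq p^M$ (with $M=M^*\cap H_\theta$) satisfies $q\le p$ and is $(M^*,\mathbb P_T)$-generic. Then I apply Lemma~\ref{preserv-byS-proper} with $\kappa=\omega_2$ and the stationary set $\mathcal S\subseteq\mathcal P_{\omega_2}(H_{\theta^*})$ to conclude that $\mathbb P_T$ preserves the regularity of $\aleph_2$, which is exactly the statement of the corollary. (Strictly, one should note $\aleph_1$ is automatically preserved too since all $M^*\in\mathcal S$ have $\omega_1\subseteq M^*$ and the generic conditions are $(M^*,\mathbb P_T)$-generic, but the corollary only asserts preservation of $\aleph_2$.)

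The only real subtlety — the step I would be most careful about — is the stationary reflection of $\mathcal E^1$ from $\mathcal P_{\omega_2}(H_\theta)$ up to $\mathcal P_{\omega_2}(H_{\theta^*})$, i.e.\ verifying that $M^*\cap H_\theta$ being an $\omega_1$-guessing $\rm IC$-model is not lost when we pass to the larger structure. The clean way is: being an $\omega_1$-guessing $\rm IC$-submodel of $(H_\theta,\in,T)$ is a property witnessed by objects in $H_{\theta^*}$, so given any function $F:\mathcal P_{\aleph_0}(H_{\theta^*})\to\mathcal P_{\omega_2}(H_{\theta^*})$ we can use ${\rm GM}^*(\omega_2)$ at $\theta$ (relativised to an appropriate parameter coding $F\restriction H_\theta$ and $T$) to find $N\prec(H_\theta,\in,T)$ that is $\omega_1$-guessing, $\rm IC$, closed under the relevant trace of $F$, with $N\cap\omega_2\in\omega_2$; then build $M^*\supseteq N$ with $M^*\cap H_\theta=N$ as an increasing union of countable elementary submodels of $H_{\theta^*}$ closed under $F$, so that $M^*\in\mathcal S$ is also closed under $F$. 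Everything else is bookkeeping already done in the excerpt. So the proof is short:

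\begin{proof}
Fix a sufficiently large regular cardinal $\theta^*$, and let $\mathcal S$ be the collection of $M^*\prec H_{\theta^*}$ such that $M^*$ is internally club, $T,H_\theta\in M^*$, $M^*\cap\omega_2\in\omega_2$, and $M\coloneqq M^*\cap H_\theta$ belongs to $\mathcal E^1$. Using ${\rm GM}^*(\omega_2)$ at $\theta$ (to produce the trace $M^*\cap H_\theta$ with the guessing and $\rm IC$ properties) and then closing off above it inside $H_{\theta^*}$, one checks in the usual way that $\mathcal S$ is stationary in $\mathcal P_{\omega_2}(H_{\theta^*})$. Given $M^*\in\mathcal S$ and $p\in\mathbb P_T\cap M^*$, we have $p\in\mathbb P_T\cap M$, so by \cref{Mtop-P} the pair $p^M$ is a condition with $p^M\le p$ and $M\in\mathcal M_{p^M}$. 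By \cref{genericity-U}, $p^M$ is $(M^*,\mathbb P_T)$-generic. Hence $\mathbb P_T$ is proper for $\mathcal S$, and since $\mathcal S\subseteq\mathcal P_{\omega_2}(H_{\theta^*})$ is stationary, \cref{preserv-byS-proper} (with $\kappa=\omega_2$) yields that $\mathbb P_T$ preserves the regularity of $\aleph_2$.
\end{proof}
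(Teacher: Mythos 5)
Your proof is correct and follows essentially the same route as the paper: form the stationary set $\mathcal S$ of $\aleph_1$-sized $M^*\prec H_{\theta^*}$ with $M^*\cap H_\theta\in\mathcal E^1$, extend $p\in M^*$ to $p^M$ via \cref{Mtop-P}, invoke \cref{genericity-U} for genericity, and conclude with \cref{preserv-byS-proper}. One small remark on the step you flag as subtle: rather than lifting a guessing model $N\prec H_\theta$ upward while trying to keep $M^*\cap H_\theta=N$ under closure by $F$ (which is delicate, since closing under $F$ can add new elements of $H_\theta$), it is cleaner to apply ${\rm GM}^*(\omega_2)$ directly at $\theta^*$ and check that the trace $M^*\cap H_\theta$ of an $\omega_1$-guessing $\rm IC$-submodel of $H_{\theta^*}$ containing $H_\theta$ and $T$ is itself an $\omega_1$-guessing $\rm IC$-submodel of $(H_\theta,\in,T)$.
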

\begin{proof}
Let $\theta^*$ be a sufficiently large regular cardinal. By \cref{preserv-byS-proper},
it is enough to show that for stationary many models $M$ in $H_{\theta^*}$, of size $\aleph_1$, every condition in $M$ can be extended to an $(M,\mathbb P_T)$-generic condition. Let 
\[
\mathcal S=\{M\prec H_{\theta^*}: \mathcal E^1,\mathcal E^0,T,\theta\in M \text{ and }M\cap H_\theta\in\mathcal{E}^1\}.
\]
By ${\rm GM}^*(\omega_2)$,
$\mathcal S$ is stationary in $\mathcal P_{\omega_2}(H_{\theta^*})$.
Now let $M^*\in\mathcal S$ and $p\in\mathbb P_T\cap M^*$. Set $M=M^*\cap H_\theta$. By \cref{Mtop-P}, 
$p^M$ is a condition with $p^M\leq p$, and by \cref{genericity-U} it is $(M^*,\mathbb P_T)$-generic.
\end{proof}

\subsection{Properness}\leavevmode

This subsection is devoted to the proof of the properness of $\mathbb P_T$. We will closely follow our strategy in the previous subsection. Notice that our notation and definition related to models in $\mathcal E^0$ are similar to the ones we used for the preservation of $\aleph_2$, but hopefully  there will be no confusion, since these two parts are completely independent,

\begin{lemma}\label{compatibility lemma 1-C}
Suppose $p$ is a condition in $\mathbb P_T$ and that $M\in\mathcal{E}^0_p$.
Assume that $q\in M$ is a condition extending $p\rest_M$. Then $p\land q$ satisfies \cref{4} of \cref{main forcing 2}.
\end{lemma}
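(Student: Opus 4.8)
The statement to prove is \cref{compatibility lemma 1-C}: given a condition $p\in\mathbb P_T$, a model $M\in\mathcal E^0_p$, and $q\in M$ extending $p\rest_M$, the pair $p\land q$ satisfies item \cref{4} of \cref{main forcing 2}. This is the countable-model analogue of \cref{compatibility lemma 1-U}, so the overall shape of the argument will mirror that proof, but the combinatorics of the side condition $\mathcal M_{p\land q}$ is more delicate here because $\mathcal M_{p\land q}=\mathcal M_p\cup\mathcal M_q\cup\{N\cap M : N\in\mathcal E^1_q,\ M\in\mathcal E^0_p,\ N\in M\}$ by \cref{remark after side condition C-proper} rather than simply $\mathcal M_p\cup\mathcal M_q$. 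I would fix $t\in\dom(f_r)$ and $N\in\mathcal E^0\cap\mathcal M_r$ with $f_r(t)\in N$ (where $r=p\land q$), assume $t$ is guessed in $N$, and show $t\in N$. Since $f_{p\land q}$ is well-defined as a function (by \cref{closed under f} for $p$), $f_r(t)$ is one of $f_p(t)$, $f_q(t)$.

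\textbf{Case split.} The trivial cases are $t\in\dom(f_q)$ with $N\in\mathcal M_q$ (then use that $q$ is a condition) and $t\in\dom(f_p)$ with $N\in\mathcal M_p$ (then use that $p$ is a condition). First I would reduce the new models $N=N'\cap M$ with $N'\in\mathcal E^1_q$, $N'\in M$: since $\omega_1\subseteq N'$, any node $t$ that is guessed in such an $N$ lies in a $T$-branch $b\in N\subseteq N'$ with $b$ of size $\le\aleph_1$, hence $t\in b\subseteq N'$; and $f_r(t)\in N\subseteq M$; combining, $t$ being guessed in $N'$ and $f_r(t)\in N'$ with $N'$ a condition-model of $q$ gives... — actually I need to be careful whether $t$ is really guessed \emph{in} $N$ versus in $N'$, so I would instead feed this through the other cases, just as the $\mathcal E^1$-proof reduces $p\land q$ intersections to $\mathcal M_p\cup\mathcal M_q$. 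So the two genuine cases are: \textbf{(1)} $t\in\dom(f_p)\setminus\dom(f_q)$ and $N\in\mathcal M_q\setminus\mathcal M_p$; \textbf{(2)} $t\in\dom(f_q)\setminus\dom(f_p)$ and $N\in\mathcal M_p\setminus\mathcal M_q$.

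\textbf{The two cases.} In case (1): $N\in\mathcal M_q\subseteq M$, so $N\in M$, and $N$ is countable. If there is $N'\in\mathcal E^1_p$ with $N\in^* N'\in^* M$ (in $\mathcal M_r$) then... — but more simply, $t$ guessed in $N$ means $t\in b$ for some $T$-branch $b\in N$; if $N\subseteq M$ (the likely subcase) then $b\in M$ so $t$ is guessed in $M$, whence $t\in M$ by \cref{4} applied to $p$ once we check $f_p(t)\in M$ (which holds since $p\in M$... wait, $p\in M$ is not assumed — rather $p\rest_M\in M$ and $q\leq p\rest_M$, $q\in M$; but $t\notin\dom(f_q)=\dom(f_p)\cap M$ combined with $t\in\dom(f_p)$ forces $t\notin M$). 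Here is the subtlety: I cannot immediately conclude $f_p(t)\in M$. Instead, from $N\in M$ and $f_r(t)=f_p(t)\in N$ I get $f_p(t)\in M$, and now \cref{4} for $p$ at $M$ gives $t\in M$ once $t$ is guessed in $M$; then $t\in\dom(f_p)\cap M=\dom(f_q)$, contradiction — so this case is vacuous, which is the right conclusion. When $N\not\subseteq M$ I would locate, via \cref{smallremark}, a model $N'\in\mathcal E^1$ in $\mathcal M_r$ with $N'\cap M\in^* N\in^* N'$, $N'\in M$; since $\omega_1\subseteq N'$ and $b\in N\subseteq N'$, $t\in N'$, and then $t\in N'\cap M$, reducing to a smaller instance (and I would argue by minimality of $N$). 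In case (2): $N\in\mathcal M_p$ but $N\notin M$ (as $\mathcal M_q\supseteq\mathcal M_p\cap M$), so — exactly as in \cref{compatibility lemma 1-U}, Case 2 — there is $M'\in\mathcal E^1_p$ with $M\in M'$... no: here $M\in\mathcal E^0$, so I want $N'\in\mathcal E^1_p$ with $N\in^* N'$... Let me instead follow \cref{closed under f lemma} Case 2's bookkeeping: use \cref{guessing lemma} with the $\mathcal E^1$-model $N'\in\mathcal E^1_p$ such that $N'\cap N\in M$ and $t\in N'$, obtain $t$ guessed in $N'\cap N$, note $N'\cap N\in\mathcal M_q$ and $f_q(t)\in N'\cap N$, hence $t\in N'\cap N\subseteq N$ since $q$ is a condition.

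\textbf{Main obstacle.} The hard part will be the precise extraction of the intermediate $\mathcal E^1$-model in case (2) and verifying the hypotheses of \cref{guessing lemma} (namely that some $N^\dagger\in\mathcal E^1_p$ with $N^\dagger\in M$ satisfies $t\in T\cap N^\dagger$ and $N^\dagger\cap N\in\mathcal M_q$), together with running the minimality induction on $N$ cleanly in case (1) when $N\not\subseteq M$ — this is where the $\in$-chain geometry from \cref{smallremark} and \cref{rest description side E0} must be invoked carefully, paralleling the corresponding passage of \cref{closed under f lemma}. Once those structural facts are in place, the conclusion $t\in N$ drops out of $q$ (resp.\ $p$) being a condition exactly as in the $\mathcal E^1$ proof. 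I would close by remarking, as after \cref{compatibility lemma 1-U}, that $p\land q$ now satisfies all of \cref{main forcing 2} except possibly \cref{2}, to be handled next via the $M$-reflection machinery adapted to $\mathcal E^0$.
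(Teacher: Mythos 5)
Your overall architecture is the right one, and your treatment of the easier configurations matches the paper's. The case $t\in{\rm dom}(f_p)$, $N\in\mathcal M_q$ goes exactly as you say: $N\in M$ and $N$ countable force $N\subseteq M$, hence $f_p(t)\in M$ and $t$ is guessed in $M$, so \cref{4} for $p$ at $M$ puts $t$ into ${\rm dom}(f_p)\cap M\subseteq{\rm dom}(f_q)$ and \cref{4} for $q$ (or your vacuity argument) finishes. Note in particular that your hedged subcase $N\not\subseteq M$ cannot occur — a countable $N\in M$ with $M$ a countable elementary submodel is automatically a subset of $M$ — so no minimality induction is needed there. Your reduction of the new models $P\cap Q$ ($P\in\mathcal E^1_q$, $Q\in\mathcal E^0_p$, $P\in Q$) to the other cases (get $t\in P$ from $b\in P$ and $b\subseteq P$, get $t\in Q$ by feeding $Q$ through the previous cases) is also what the paper does, though these intersections involve arbitrary $Q\in\mathcal E^0_p$, not only the fixed $M$.

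The gap is in your case (2), $t\in{\rm dom}(f_q)$ and $N\in\mathcal M_p\setminus\mathcal M_q$, which is where all the work lies. You posit a single $N'\in\mathcal E^1_p$ with $t\in N'$ and $N'\cap N\in\mathcal M_q$ and apply \cref{guessing lemma} once; but nothing guarantees such a model exists in one step — the natural candidate for $N'\cap N$ may itself land back in $\mathcal M_p\setminus\mathcal M_q$, and you have misplaced the minimal-counterexample descent into case (1), where it is not needed. The paper's argument runs: take $N$ least in $\mathcal M_p$ with $t$ guessed in $N$, $f_q(t)\in N$ and $t\notin N$; by \cref{rest description side E0}, $N$ lies in $(P\cap M,P]_p$ for some $P\in(\mathcal E^1_p\cap M)\cup\{H_\theta\}$, and $t\in P\cap M$ (since $t\in M$ because $q\in M$, and $t\in P$ because the witnessing branch lies in $P$ and $\omega_1\subseteq P$); hence $P\cap M\nsubseteq N$, so \cref{smallremark} yields $Q\in\mathcal E^1_p$ with $Q\cap N\in^* P\cap M\in^* Q\in N$; then $t\in Q$, \cref{guessing lemma} transfers the guess down to $Q\cap N$, $f_q(t)\in Q\cap N$, and \emph{minimality of $N$} is what forces $Q\cap N\in\mathcal M_q$, whereupon \cref{4} for $q$ gives $t\in Q\cap N\subseteq N$, a contradiction. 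Without this descent your intermediate model is unjustified, so as written the hard case does not close.
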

\begin{proof}
Set $r=p\land q$.
Notice that $f_r$ is well-defined as a function. Fix $t\in{\rm dom}(f_r)$ and $N\in\mathcal E^0\cap\mathcal M_r$ so that $t$ is guessed in $N$ and $f_r(t)\in N$. We shall show that $t\in N$.  As in \cref{compatibility lemma 1-U}, we shall study the nontrivial cases, thus we may  assume that either  $t\in{\rm dom}(f_q)$ and $N\notin \mathcal M_q$, or $t\in{\rm dom}(f_p)$ and $N\notin \mathcal M_p$. 
Since $M$ is in $\mathcal{E}^0$,  the proof  consists of three cases as $\mathcal M_r\setminus (\mathcal M_p\cup\mathcal M_q)$ may be nonempty.
Recall that by \cref{remark after side condition C-proper}, $\mathcal M_r$ is the union of
$\mathcal M_p\cup\mathcal M_q$ and the set of models of the form $P\cap Q$, where $P\in Q$ are in $\mathcal E^1_q$  and $\mathcal E^0_p$, respectively.\\

\textbf{Case 1:} $t\in{\rm dom}(f_q)$ and $N\in\mathcal M_p\setminus \mathcal M_q$.\newline
    In this situation, we have  $N\in (P\cap M,P]_p$ for some $P\in(\mathcal{E}^1_p\cap  M)\cup\{H_\theta\}$. Since $t$ is guessed in $N\subseteq P$ and $\omega_1\subseteq P$, we  have  $t\in P$. 
   Assume towards a contraction that $t\notin N$. We may assume that $N$ is the least model in $\mathcal M_p$ with the above properties.
   This implies that $P\cap M\nsubseteq N$, since $t\in P\cap M$. Therefore, by \cref{smallremark}, 
    there is a model $Q\in \mathcal E^1_p$ such that
    $P\cap M\in Q\in N\in P$ and $Q\cap N\in^* P\cap M$.
   Observe that $t\in Q$. By \cref{guessing lemma}, $t$ is guessed in $Q\cap N$.  On the other hand $f_q(t)\in Q\cap N$.
   Since $t\notin Q\cap N$,
     our minimality assumption implies that $Q\cap N$ is in $\mathcal M_q$, but then  since $q$ is a condition, $t$ is an element of $Q\cap N\subseteq N$, a contradiction!\\

\textbf{Case 2:} $t\in{\rm dom}(f_p)$ and $N\in\mathcal M_q$.\newline
  We have  $f_p(t)\in N\subseteq M$.   Observe that $t$ is also guessed in $M$, since $N\subseteq M$. As $p$ is a condition,  \cref{4} of \cref{main forcing 2} implies that $t\in M\cap {\rm dom}(f_p)\subseteq {\rm dom}(f_q)$. On the other hand, $q$ is a condition and $N\in\mathcal M_q$, and hence, by \cref{4} of \cref{main forcing 2}, $t\in N$.\\
    
\textbf{Case 3:} $t\in{\rm dom}(f_r)$ and $N\in \mathcal M_r\setminus(\mathcal M_p\cup \mathcal M_q)$.\newline
    There are  $P\in \mathcal E^1_q$ and  $Q\in\mathcal E^0_p$  with $P\in Q$ such that $N=P\cap Q$. Let $b\in N$ be a $T$-branch with $t\in b$. Then $t$ is guessed in $Q$, as $b\in Q$. We have also $f_p(t)\in Q$. Thus by the two previous cases, $t\in Q$. On the other hand, $b\in P$ and  $b\subseteq P$, as $T$ has no cofinal branches, and $P\cap\omega_2$ is an ordinal. Thus
     $t\in P$. Therefore, $t\in P\cap Q=N$.

\end{proof}

\begin{notation}
Assume that $p$ is a condition in $\mathbb P_T$, and that $M\in\mathcal E^0_p$.
\begin{enumerate}
    \item  We let $\mathscr{D}(p,M)$ denote the set of $t\in{\rm dom}(f_p)$ such that $t\notin M$, but $f_p(t)\in M$.
    \item $\mathscr{O}(p,M)\coloneqq\{t\in \mathscr{D}(p,M):
   O_M(t) \text{ is not guessed in } M \text{ and } \eta_M(t)\notin M \}$.
\end{enumerate}
\end{notation}

\begin{definition}[$M$-support]\label{def-C-sigma}
  Suppose $p$ is a condition in $\mathbb P_T$ and $M\in\mathcal{E}^0_p$.
  We say a  function $\sigma:\mathscr{D}(p,M)\rightarrow M$ 
  is an \emph{$M$-support} for $p$ if the following hold, for every $t\in\dom(\sigma)$.
 \begin{enumerate}
     \item If $O_M(t)$ is guessed in $M$, then $\sigma(t)\in M$  is such that
     $M\cap \sigma(t)= M\cap b_M(t)$. 
     \item\label{2-def-C-sigma}  If $O_M(t)$ is not guessed in $M$, then $\sigma(t)\subseteq b_M(t)$ is a $T$-branch in $M$ such that no node in ${\rm dom}(f_p)$ has height in the interval $\big[{\rm ht}(\sup(\sigma(t))),\eta_M(t)\big)$.
 \end{enumerate}

\end{definition}

Note that if $t\in \dom(\sigma)$ and $O_M(t)$ is guessed in $M$, then by elementarity, $\sigma(t)$  is a $T$-branch, in fact it is a cofinal branch through  $ T_{<\eta^*_M(t)}$, where $\eta^*_M(t)={\rm min}(M\cap\omega_2\setminus\eta_M(t))$. Moreover, $\sigma(t)$ is unique.

\begin{lemma}\label{existence M-support}
Let $p\in\mathbb P_T$,  and let $M\in\mathcal E^0_p$. Then, there is an $M$-support for $p$.
\end{lemma}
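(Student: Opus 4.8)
The plan is to produce the $M$-support $\sigma$ pointwise, splitting $\mathscr{D}(p,M)$ according to whether $O_M(t)$ is guessed in $M$. Recall that $\dom(f_p)$ is finite, so only finitely many values of $\sigma$ need to be defined, and we may choose them one at a time with no interaction issues beyond the finitely many height constraints coming from $\dom(f_p)$.

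First I would treat a node $t\in\mathscr{D}(p,M)$ with $O_M(t)$ guessed in $M$. By the definition of ``guessed'', there is a $T$-branch $b\in M$ with $O_M(t)\in b$; since $T$ is Hausdorff, the initial segment $b_M(t)=b_{O_M(t)}$ is read off from $b$ and hence $b_M(t)$ is ``guessed'' by $b$ in the sense of \cref{guessing-def}, i.e.\ there is $\sigma(t)\in M$ with $M\cap\sigma(t)=M\cap b_M(t)$ — e.g.\ one may take $\sigma(t)=b\cap T_{<\eta^*_M(t)}$ where $\eta^*_M(t)=\min(M\cap\omega_2\setminus\eta_M(t))$, which lies in $M$ by elementarity because $\eta_M(t)<\sup(M\cap\omega_2)$ (as $T$ has no cofinal branch) and is the branch described in the remark following \cref{def-C-sigma}. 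This gives the required value, and as noted there it is forced to be unique.

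Next I would treat $t\in\mathscr{D}(p,M)$ with $O_M(t)$ not guessed in $M$. Here I need a $T$-branch $\sigma(t)\in M$ with $\sigma(t)\subseteq b_M(t)$ such that no node of $\dom(f_p)$ has height in $[\,{\rm ht}(\sup\sigma(t)),\eta_M(t)\,)$. Since $O_M(t)\notin M$ (it is not even guessed), \cref{successor overlap} forces $\eta_M(t)$ to be a limit ordinal, and by the definition of $\eta_M(t)$ as a supremum of heights of nodes of $T\cap M$ below $t$, the set $\{\,{\rm ht}(s): s\in T\cap M,\ s\leq_T t\,\}$ is cofinal in $\eta_M(t)$ inside $M$; each such $s$ gives $\bar b_s\in M$, a $T$-branch below $b_M(t)$ whose supremum has height ${\rm ht}(s)<\eta_M(t)$. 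Because $\dom(f_p)$ is finite, only finitely many heights below $\eta_M(t)$ are ``blocked'', so I can pick $s\in T\cap M$ with $s\leq_T t$ whose height exceeds all blocked heights that are $<\eta_M(t)$, and set $\sigma(t)=\bar b_s\in M$; then $\sup\sigma(t)=s$ has height exceeding every blocked ordinal below $\eta_M(t)$, so the interval $[\,{\rm ht}(s),\eta_M(t)\,)$ meets no height of $\dom(f_p)$, as required. Finally, $\sigma(t)\in M$ in both cases, and $\mathscr{D}(p,M)\subseteq M$ need not hold, but the codomain condition $\sigma\colon\mathscr{D}(p,M)\to M$ is satisfied by construction; collecting the finitely many values yields the desired $M$-support. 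The only mild subtlety — the ``main obstacle'' — is checking in the guessed case that the witnessing branch produced by elementarity is actually an initial segment of $b_M(t)$ on $M$ and is uniquely determined; this is where Hausdorffness of $T$ and \cref{delta and intersection}-style reasoning about heights are used.
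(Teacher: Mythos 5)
Your construction is correct and coincides with the paper's own proof: in the guessed case you take $\sigma(t)=b\cap T_{<\eta^*_M(t)}$ for a witnessing branch $b\in M$, and in the non-guessed case you use that $\eta_M(t)$ is a limit ordinal (via \cref{successor overlap}), that $\dom(f_p)$ is finite, and that the heights of nodes of $T\cap M$ below $t$ are cofinal in $\eta_M(t)$ to choose $s\in M$ past all blocked heights and set $\sigma(t)=\overline{b}_s$. The only cosmetic difference is that the paper first fixes the ordinal $\gamma\in M$ and then the node of height $\gamma$ below $O_M(t)$, whereas you pick the node $s\in T\cap M$ directly; this changes nothing.
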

\begin{proof}
Suppose that $t\in\mathscr{D}(p,M)$. If $O_M(t)$ is guessed in $M$, then there is a $T$-branch $b\in M$ such that $O_M(t)\in b$. Let $\eta^*_M(t)={\rm min}(M\cap\omega_2\setminus\eta_M(t))$, and
 set $\sigma(t)\coloneqq b\cap T_{<\eta^*_M(t)}$. It is easily seen that
$M\cap \sigma(t)=M\cap b_M(t).$

If $O_M(t)$ is not guessed in $M$, then $\eta_M(t)$ is a  limit ordinal by \cref{successor overlap}. Since $\dom(f_p)$ is finite,  there is a  sequence of nodes in $M$ cofinal in $O_M(t)$. Thus one can find an ordinal $\gamma\in M$, such that
there is no node in ${\rm dom}(f_p)$ whose height is in the interval $[\gamma,\eta_M(t))$.
Choose a node $s$ of height $\gamma$ below $O_M(t)$ and set $\sigma(t)\coloneqq\overline{b}_{s}$. We have $s\in M$, since $\gamma\in M$. Thus $\sigma(t)\in M$. Observe that 
${\rm ht}(\sup(\sigma(t)))={\rm ht}(s)=\gamma$.
\end{proof}
\begin{definition}[$M$-reflection]\label{M-ref C def}
  Suppose $p$ is a condition in $\mathbb P_T$. Assume that $M\in\mathcal{E}^0_p$.
  Let $\sigma$ be an $M$-support for $p$.
  A condition $q$ is called an \emph{$(M,\sigma)$-reflection of $p$}  if the following properties are satisfied. 
  \begin{enumerate}
      \item\label{p1-C-Reflection} $q\leq p\rest_M$.
      \item \label{p22-C-Reflection}The following hold for every $t\in\dom(\sigma)$.
      \begin{enumerate}
        
      \item\label{p2-C-Reflection} If $\eta_M(t)\in M$, then there is no node in ${\rm dom}(f_q)$ whose height belongs to
      the interval $\big[{\rm ht}(\sup(\sigma(t))),\eta_M(t)\big)$.
      \item\label{p3-C-Reflection} For every $s\in{\rm dom}(f_q)$ with $s\in \sigma(t)$,   $f_q(s)\neq f_p(t)$. 
  \end{enumerate}
    
      \end{enumerate}
  
  Let $R_p(M,\sigma)$ denote the set of $(M,\sigma)$-reflections of $p$.
\end{definition}

Notice that as before, if $M^*\prec H_{\theta^*}$, for some sufficiently large regular cardinal $\theta^*$ which contains $T$ and $H_\theta$,
and $p$ is a condition in $\mathbb P_T$ with $M\coloneqq M^*\cap H_{\theta}\in\mathcal{E}^0_p $, then $R_p(M,\sigma)\in M^*$, whenever $\sigma$ is an $M$-support for $p$. 

\begin{lemma}\label{reflection C existence}
Suppose $p$ is a condition in $\mathbb P_T$, and that $M\in\mathcal{E}^0_p$.
 Let $\sigma$ be an $M$-support set for $p$. Then   $p\in R_p(M,\sigma)$.
\end{lemma}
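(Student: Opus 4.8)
The plan is to verify, with $q=p$, the three requirements of \cref{M-ref C def}. Requirement \cref{p1-C-Reflection}, that $p\leq p\rest_M$, is exactly \cref{rest to M in P}. So fix $t\in\mathscr{D}(p,M)$ (the domain of $\sigma$). The two facts I will use about $p$ are that $f_p$ is a specialising function and that $f_p(t)\in M$ (this being part of $t\in\mathscr{D}(p,M)$; note also that, by clause \cref{4} of \cref{main forcing 2} applied to $p$ and $M$, $t$ is not guessed in $M$, so $t$ lies on no $T$-branch of $M$, in particular not on $\sigma(t)$).

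For requirement \cref{p2-C-Reflection}: if $O_M(t)$ is not guessed in $M$, then \cref{2-def-C-sigma} chose $\sigma(t)$ with no node of ${\rm dom}(f_p)={\rm dom}(f_q)$ having height in $\big[{\rm ht}(\sup(\sigma(t))),\eta_M(t)\big)$, which is exactly what is asked (regardless of whether $\eta_M(t)\in M$). If $O_M(t)$ is guessed in $M$, we may assume $\eta_M(t)\in M$, as otherwise the requirement is vacuous; then $O_M(t)$ — being the unique node of height $\eta_M(t)$ on the guessing branch — lies in $M$ by elementarity, so $\eta^*_M(t)=\eta_M(t)$, and by the uniqueness of $\sigma(t)$ noted after \cref{def-C-sigma} we get $\sigma(t)=b_{O_M(t)}=b_M(t)$ (this branch is in $M$, is cofinal through $T_{<\eta^*_M(t)}$, and agrees with $b_M(t)$ on $M$). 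Hence $\sup(\sigma(t))=O_M(t)$ has height $\eta_M(t)$ and the interval $\big[{\rm ht}(\sup(\sigma(t))),\eta_M(t)\big)$ is empty.

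For requirement \cref{p3-C-Reflection}: let $s\in{\rm dom}(f_p)$ with $s\in\sigma(t)$, and suppose towards a contradiction that $f_p(s)=f_p(t)$. Then $f_p(s)=f_p(t)\in M$, and $s$ lies on $\sigma(t)$, which by \cref{def-C-sigma} is a $T$-branch belonging to $M$ in either case; so $s$ is guessed in $M$, and clause \cref{4} of \cref{main forcing 2} forces $s\in M$. Thus $s\in M\cap\sigma(t)$, and $M\cap\sigma(t)\subseteq b_M(t)=b_{O_M(t)}$ — because $\sigma(t)\subseteq b_M(t)$ when $O_M(t)$ is not guessed, and because $M\cap\sigma(t)=M\cap b_M(t)$ by definition of $\sigma$ when $O_M(t)$ is guessed. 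Therefore $s<_T O_M(t)\leq_T t$, so $s\neq t$ and $f_p(s)\neq f_p(t)$ because $f_p$ is specialising — the desired contradiction.

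The lemma is easy (it is the analogue of \cref{Mreflection-Ulemma}), so I do not expect a real obstacle. The one point needing a little care is requirement \cref{p3-C-Reflection} when $O_M(t)$ is guessed and $\eta_M(t)\notin M$: then $\sigma(t)$ can contain nodes lying strictly above $O_M(t)$ along the guessing branch, so one cannot read off $s<_T t$ directly from $s\in\sigma(t)$ and must instead use clause \cref{4} of the definition of $\mathbb{P}_T$ to pull $s$ into $M$ — after which $s$ is forced back onto $b_{O_M(t)}$ and the argument closes.
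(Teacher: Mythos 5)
Your proof is correct and follows essentially the same route as the paper's: clause \cref{p1-C-Reflection} via \cref{rest to M in P}; clause \cref{p2-C-Reflection} by splitting on whether $O_M(t)$ is guessed in $M$, using \cref{2-def-C-sigma} of \cref{def-C-sigma} in one case and showing $O_M(t)\in M$ (hence $\sigma(t)=b_M(t)$ and the interval is empty) in the other; and clause \cref{p3-C-Reflection} by using \cref{4} of \cref{main forcing 2} to pull $s$ into $M$, so that $s<_T O_M(t)\leq_T t$ contradicts $f_p$ being specialising. The extra care you take with the case $O_M(t)$ guessed but $\eta_M(t)\notin M$ (where $\sigma(t)$ may overshoot $O_M(t)$) is a genuine subtlety that the paper's argument handles implicitly via the same membership-in-$M$ step.
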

\begin{proof}
Let us check the items in \cref{M-ref C def}.
\cref{p1-C-Reflection} is essentially \cref{rest to M in P}. To verify \cref{p22-C-Reflection}, let us fix $t\in\dom(\sigma)$.\\

\textbf{\cref{p2-C-Reflection}:}
Assume that $\eta_M(t)\in M$. If $O_M(t)$ is not guessed in $M$, then  by 
the \cref{2-def-C-sigma} of \cref{def-C-sigma}, there is no node in ${\rm dom}(f_p)$ with height in the interval $\big[{\rm ht}(\sup(\sigma(t))\big),\eta_M(t))$. Thus let us assume that  $O_M(t)$ is guessed in $M$. We show that $\sigma(t)=b_M(t)$, which in turn implies that the interval
$\big[{\rm ht}(\sup(\sigma(t))),\eta_M(t)\big)$ is empty.  To show that $\sigma(t)=b_M(t)$, it is enough to show that $b_M(t)\in M$. 
Suppose $b\in M$ is a $T$-branch with $O_M(t)\in b$. Then the order type of $b$ is at least $\eta_M(t)+1$ and $O_M(t)$ is the $\eta_M(t)$-th element of $b$. Since  $\eta_M(t)\in M$, we have $O_M(t)\in M$, and hence $b_M(t)\in M$.\\

\textbf{\cref{p3-C-Reflection}:}
Suppose  that $s\in \sigma(t)$ and $f_p(s)=f_p(t)$. Then $s$ is guessed in $M$. As  $f_p(t)\in M$ and $p$ is a condition, we have  $s\in M$.
This implies that $s\leq_T O_M(t)\leq_T t$. Since $p$ is a condition, we  $t=s\in M$, which is a contradiction! (as $t\notin M$.)
 
\end{proof}

\begin{lemma}\label{compatibility lemma 2-C}
Suppose $p\in\mathbb P_T$, and that $M\in\mathcal{E}^0_p$.
Assume that $q\in M\cap R_p(M,\sigma)$.
Let $r\coloneqq p\land q$. Then  $r'=(\mathcal M_r, f_r\setminus \{(t,f_p(t)): t\notin \mathscr{O}(p,M)\})$ is a condition.
\end{lemma}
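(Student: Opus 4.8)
The plan is to verify the four items of \cref{main forcing 2} for $r'$. Items \cref{1} and \cref{2} are the easy ones: the side conditions part $\mathcal M_{r'}=\mathcal M_r$ is already a condition in $\mathbb M$ by \cref{side condition C-proper} (and \cref{remark after side condition C-proper}), and $f_{r'}\subseteq f_r\subseteq f_p\cup f_q$ is a partial specialising function because it is a restriction of $f_p\cup f_q$, which one checks is in $\mathbb S_\omega(T)$ by the same argument as in \cref{claim1- gen U}, now using \cref{p3-C-Reflection} of \cref{M-ref C def} in place of its $\mathcal E^1$-analogue — the comparable pairs $s<_T t$ with $t\in\dom(f_p)\setminus\dom(f_q)$, $s\in\dom(f_q)\setminus\dom(f_p)$ are handled by noting that $s\in M$, $s\leq_T O_M(t)$, and (after checking $s\in\sigma(t)$) invoking \cref{p3-C-Reflection}. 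The point of deleting the pairs $(t,f_p(t))$ with $t\notin\mathscr O(p,M)$ is precisely to make this last check go through: those are the nodes where $\sigma(t)$ is only a proper initial segment of $b_M(t)$ and one cannot guarantee $s\in\sigma(t)$ from $s<_T\sigma(t)$.

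Next I would handle \cref{closed under f}. Here I expect to reuse \cref{closed under f lemma} almost verbatim: that lemma shows $f_r=f_p\cup f_q$ satisfies \cref{closed under f}, and \cref{closed under f} is visibly inherited by any restriction of $f_r$ to a subset of its domain that is still closed under the relevant intersections — so since $\mathcal M_{r'}=\mathcal M_r$, removing some pairs from $f_r$ cannot destroy \cref{closed under f}. (One should double-check the subtlety that \cref{closed under f} quantifies over $t\in\dom(f_{r'})$, so fewer $t$ to check — it is genuinely monotone.)

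The real work is \cref{4}, and this is where I expect the main obstacle. Fix $N\in\mathcal E^0\cap\mathcal M_{r'}=\mathcal E^0\cap\mathcal M_r$, and $t\in\dom(f_{r'})$ with $f_{r'}(t)=f_r(t)\in N$ and $t$ guessed in $N$; we must show $t\in N$. As in \cref{compatibility lemma 1-C} we split into the three cases according to whether $N\in\mathcal M_q$, $N\in\mathcal M_p\setminus\mathcal M_q$, or $N\in\mathcal M_r\setminus(\mathcal M_p\cup\mathcal M_q)$, and likewise whether $t\in\dom(f_q)$ or $t\in\dom(f_p)$. The cases where $t\in\dom(f_q)$ and $N\in\mathcal M_p\setminus\mathcal M_q$, and the ``third'' case $N=P\cap Q$, run exactly as in \cref{compatibility lemma 1-C} using \cref{guessing lemma} and a minimal-counterexample argument, since those arguments never used the full $f_r$, only that $t\in\dom(f_q)$ and $q$ is a condition. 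The genuinely new case is $t\in\dom(f_p)$ (so $t\in\mathscr D(p,M)$ — note $f_p(t)\in N\subseteq M$) with $N\in\mathcal M_q$ or $N$ one of the new intersection models: here the argument in \cref{compatibility lemma 1-C} Case 2 concluded ``$t\in M$, hence $t\in\dom(f_q)$'', using \cref{4} for $p$ together with the fact that $t$ is guessed in $M$ (as $N\subseteq M$). But this is exactly what may fail: if $t\notin\mathscr O(p,M)$ then $t\in\dom(f_p)$ but $O_M(t)$ \emph{is} guessed in $M$ or $\eta_M(t)\in M$, and by \cref{eta in M} this forces $t\in M$ already — contradicting $t\in\mathscr D(p,M)$. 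So in fact any $t\in\dom(f_p)\cap\dom(f_{r'})$ that is relevant here must lie in $\mathscr O(p,M)$, hence was \emph{not} deleted, and moreover for such $t$ one has $O_M(t)$ not guessed in $M$ and $\eta_M(t)\notin M$; then the argument proceeds: $t$ guessed in $N\subseteq M$ gives $t$ guessed in $M$, so $O_M(t)$ would be guessed in $M$ (via the branch witnessing $t$), contradiction — therefore no such $t$ exists in this sub-case, which vacuously establishes \cref{4}. I would write this out carefully, as the bookkeeping between ``$t$ deleted'', ``$t\in\mathscr O(p,M)$'', and ``$t$ guessed in $M$'' is the crux, and the whole point of the definition of $r'$ is to make these three conditions interlock. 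Finally, $r'\leq p\rest_M$ is immediate and $r'$ extends $q$ on the nodes retained; I would remark on exactly which extension relations $r'$ satisfies at the end.
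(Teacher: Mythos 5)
Your skeleton is the paper's: \cref{1}, \cref{closed under f} and \cref{4} of \cref{main forcing 2} come from \cref{side condition C-proper}, \cref{closed under f lemma} and \cref{compatibility lemma 1-C}, and the substance is showing $f_{r'}\in\mathbb S_\omega(T)$. Two remarks on the part you treat as routine. First, \cref{4} is, exactly like \cref{closed under f}, monotone under shrinking the function while keeping $\mathcal M_r$ fixed, so it follows from \cref{compatibility lemma 1-C} outright; your ``genuinely new case'' is not a case at all. Second, the argument you substitute there is wrong: you claim that $t\in\mathscr D(p,M)\setminus\mathscr O(p,M)$ forces $t\in M$ via \cref{eta in M}, but that lemma needs $t$ itself to be guessed in $M$, whereas leaving $\mathscr O(p,M)$ only tells you that $O_M(t)$ is guessed in $M$ or $\eta_M(t)\in M$; the set $\mathscr D(p,M)\setminus\mathscr O(p,M)$ is in general nonempty. (It had better be: comparing the statement with its use in \cref{genericity-C} --- where after this lemma only the pairs with $t\in\mathscr O(p,M)$ remain to be checked --- the set-builder in the statement should read $t\in\mathscr O(p,M)$, i.e.\ one deletes the members of $\mathscr O(p,M)$ and keeps the rest. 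Taking the printed ``$t\notin\mathscr O(p,M)$'' at face value is what pushed you into the false reconciliation above.)

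The genuine gap is in \cref{2}. Everything reduces to: for comparable $s\in\dom(f_q)\setminus\dom(f_p)$ and retained $t\in\dom(f_{r'})\setminus\dom(f_q)$, $f_q(s)\neq f_p(t)$. You reduce this to ``after checking $s\in\sigma(t)$, invoke \cref{p3-C-Reflection}'', but that check is precisely the content of the lemma, and your one-sentence justification of it is off: for $M\in\mathcal E^0$, $\sigma(t)$ is a branch, not a node (so ``$s<_T\sigma(t)$'' is the $\mathcal E^1$ picture from \cref{M-support U def}), and the nodes for which $s\in\sigma(t)$ cannot be guaranteed are exactly those with $O_M(t)$ not guessed in $M$ \emph{and} $\eta_M(t)\notin M$, i.e.\ the members of $\mathscr O(p,M)$ --- the deleted ones, not the retained ones. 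What is missing is the two-case verification for retained $t$ with $f_p(t)\in M$ and $s<_T t$ (hence $s<_T O_M(t)$, since $s\in M$): if $O_M(t)$ is guessed in $M$, then $s\in M\cap b_M(t)=M\cap\sigma(t)$ by \cref{def-C-sigma}; if not, then $\eta_M(t)\in M$ because $t\notin\mathscr O(p,M)$, so \cref{p2-C-Reflection} forbids ${\rm ht}(s)\in\big[{\rm ht}(\sup(\sigma(t))),\eta_M(t)\big)$, whence $s<_T\sup(\sigma(t))$ and $s\in\sigma(t)$. You also omit the reduction to $f_p(t)\in M$ (otherwise $f_q(s)\in M$ while $f_p(t)\notin M$, and $\sigma(t)$ is not even defined) and the direction $t<_T s$ (impossible: $t$ would be guessed in $M$ with $f_p(t)\in M$, so $t\in M$ by \cref{4} for $p$). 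These steps are short, but they are the proof.
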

\begin{proof}
\cref{side condition C-proper,closed under f lemma,compatibility lemma 1-C}
imply that
$r'$ satisfies \cref{1,closed under f,4} of \cref{main forcing 2}, respectively.
Therefore, it remains to show that the well-defined function $f_{r'}\coloneqq f_r\setminus \{(t,f_p(t)): t\notin \mathscr{O}(p,M)\}$ is  a condition in $\mathbb S_{\omega}(T)$. To see this, let $s\in{\rm dom}(f_q)\setminus {\rm dom}(f_p)$ and $t\in{\rm dom}(f_{r'})\setminus {\rm dom}(f_q)$.
Assume that $s$ and $t$ are comparable in $T$, we shall show that
$f_q(s)\neq f_p(t)$. We may assume that $f_p(t)\in M$.
Thus $t<_T s$ is impossible, as otherwise 
$t$ is guessed in $M$, and hence $t\in M$, which is a contradiction!
 Consequently, the only possible case is $s<_T t$.
 In this case, $s<_T O_M(t)$. We claim that $s\in \sigma(t)$.
 This is clear if  $O_M(t)$ is guessed in $M$.
  If $O_M(t)$ is not guessed in $M$, then $\eta_M(t)\in M$ as $t\notin\mathscr O(p,M)$. Therefore, by \cref{p2-C-Reflection} of \cref{M-ref C def}, the height of $s$ avoids the interval $\big[{\rm ht}(\sup(\sigma(t))),\eta_M(t)\big)$.
 Thus $s<_T \sup(\sigma(t))$, and hence $s\in\sigma(t)$. In either case, $s\in \sigma(t)$, but then \cref{p3-C-Reflection}  of \cref{M-ref C def} implies that $f_p(t)\neq f_q(s)$.

\end{proof}

\begin{proposition}\label{genericity-C}
Suppose that $p\in\mathbb P_T$. Let $\theta^*$ be a sufficiently large regular cardinal. Assume that $M^*\prec H_{\theta^*}$ is countable and contains $T$ and $\theta$.
If $M\coloneqq M^*\cap H_{\theta}\in\mathcal M_p$. Then $p$ is $(M^*,\mathbb P_T)$-generic.
\end{proposition}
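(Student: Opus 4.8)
\emph{Proof proposal.} The plan is to follow the proof of \cref{genericity-U} almost verbatim, now with $\mathcal E^0$ in place of $\mathcal E^1$ and with the ``$C$''\nobreakdash-versions of the supporting lemmas. First I would reduce the problem: fix a condition $p'\leq p$ and a dense $D\in M^*$; since $M\in\mathcal M_{p'}$ I may replace $p$ by $p'$, and, extending $p$ into $D$ and noting that the stronger condition still has $M$ among its models, also assume $p\in D$. Then, by \cref{existence M-support}, fix an $M$-support $\sigma$ for $p$; by \cref{reflection C existence} we have $p\in R_p(M,\sigma)$, and, as observed after \cref{M-ref C def}, $R_p(M,\sigma)\in M^*$. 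Since $D$ and $R_p(M,\sigma)$ lie in $M^*$ and $p$ witnesses that their intersection is nonempty, elementarity produces $q\in D\cap R_p(M,\sigma)\cap M^*$; as every condition of $\mathbb P_T$ has hereditary size below $\theta$, it follows that $q\in M^*\cap H_\theta=M$, and $q\leq p\rest_M$ by \cref{p1-C-Reflection}.

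It remains to show that $q$ is compatible with $p$, and for this I would analyse $r\coloneqq p\land q$. That $\mathcal M_r$ is a condition of $\mathbb M$ is \cref{side condition C-proper}; that the pair $r$ satisfies \cref{closed under f} of \cref{main forcing 2} is \cref{closed under f lemma}; that it satisfies \cref{4} is \cref{compatibility lemma 1-C}. The one genuinely new point --- and the place where the countable case departs from the $\mathcal E^1$ case of \cref{genericity-U}, in which $f_p\cup f_q$ is a condition outright by \cref{claim1- gen U} --- is the specialising property of $f_r=f_p\cup f_q$. As in the proof of \cref{compatibility lemma 2-C}, one checks that a clash $f_q(s)=f_p(t)$, with $s\in{\rm dom}(f_q)\setminus{\rm dom}(f_p)$ and $t\in{\rm dom}(f_p)\setminus{\rm dom}(f_q)$ comparable in $T$, forces $s<_T t$, $s\in{\rm dom}(f_q)\subseteq M$, and $t\in\mathscr O(p,M)$, i.e.\ $O_M(t)$ is not guessed in $M$ and $\eta_M(t)\notin M$: at all other nodes the clash is ruled out either by clause \cref{p2-C-Reflection} (when $\eta_M(t)\in M$) or, when $O_M(t)$ is guessed in $M$, because $\sigma(t)$ then absorbs $M\cap b_M(t)$ so that \cref{p3-C-Reflection} applies.

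The hard part is precisely this residual case $t\in\mathscr O(p,M)$: here $M^*$ does not see the ordinal $\eta_M(t)$, so clause \cref{p2-C-Reflection} cannot be imposed, and the clash is not automatically excluded by $q\in R_p(M,\sigma)$. The way I would attack it is to exploit that $O_M(t)\notin M$: any $s\in{\rm dom}(f_q)\subseteq M$ with $s<_T t$ then satisfies $s<_T O_M(t)$, hence lies on $b_M(t)$, and since ${\rm dom}(f_q)$ is finite its nodes below $t$ have heights bounded strictly below $\eta_M(t)$; one then wants every such $s$ to lie on the chosen initial segment $\sigma(t)$, whereupon \cref{p3-C-Reflection} forbids $f_q(s)=f_p(t)$, so that $f_p\cup f_q\in\mathbb S_\omega(T)$ and $r=p\land q$ is a common extension of $p$ and $q$. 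Making this confinement argument go through --- either by building it into a slightly stronger reflection requirement that still leaves $R_p(M,\sigma)$ in $M^*$ and dense enough below $p\rest_M$ to meet $D$ inside $M^*$, or, if one only uses the weaker output of \cref{compatibility lemma 2-C} (that $r$ becomes a condition after deleting the finitely many values $f_p(t)$, $t\in\mathscr O(p,M)$), by then reinstating those deleted nodes using the same confinement --- is the crux of the proof. Once compatibility is established, $D\cap M^*$ is predense below $p$, and everything else proceeds exactly as in \cref{genericity-U}, yielding that $p$ is $(M^*,\mathbb P_T)$-generic.
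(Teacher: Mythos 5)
Your reduction, your use of \cref{existence M-support} and \cref{reflection C existence}, and your dispatching of \cref{1}, \cref{closed under f} and \cref{4} of \cref{main forcing 2} via \cref{side condition C-proper}, \cref{closed under f lemma} and \cref{compatibility lemma 1-C} all match the paper, and you have correctly located the difficulty: a clash $f_q(s)=f_p(t)$ with $s<_T t$ survives all the reflection clauses exactly when $t\in\mathscr O(p,M)$, where $\eta_M(t)\notin M$ so that \cref{p2-C-Reflection} cannot be imposed. But at that point your argument stops: you say ``one then wants every such $s$ to lie on $\sigma(t)$'' and defer the ``confinement argument'' to an unspecified strengthening of the reflection requirement. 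This is a genuine gap, and the suggested fix cannot work as stated: the dangerous nodes are those $s\in\dom(f_q)$ lying on $b_M(t)$ with ${\rm ht}(s)\in\big[{\rm ht}(\hat t),\eta_M(t)\big)$, and neither this interval nor the branch $b_M(t)$ is visible to $M^*$ --- indeed $\eta_M(t)\notin M$ and, since $t\in\mathscr O(p,M)$, the branch $b_M(t)$ is precisely \emph{not} guessed in $M$ --- so no first-order condition on $q$ formulated inside $M^*$ can directly exclude them, and an arbitrary $q\in D\cap R_p(M,\sigma)\cap M^*$ may well produce a clash.

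The paper's resolution is the essential new idea you are missing: rather than choosing a single $q$, one builds inside $M^*$ a \emph{$T$-assignment} $x\mapsto p_x$ on $\mathcal P_{\omega_1}(T)$ landing in $D\cap R_p(M,\sigma)$ and satisfying the quantitative clause that any $s\in\dom(f_{p_x})$ with ${\rm ht}(s)\in\big[{\rm ht}(\hat t_i),\eta^*_i\big)$ lies strictly above all of $x\cap T_{<\eta^*_i}$ in height. One then applies \cref{Baumgartner} finitely many times --- to the characteristic functions $\Psi_i$ of the branches $b_M(t_i)$, which are not guessed in $M$ exactly because $t_i\in\mathscr O(p,M)$ --- to thin down to a cofinal $B^*\in M^*$ on which every dangerous node $t^x_j$ of height $\geq{\rm ht}(\hat t_i)$ satisfies $t^x_j\nless_T O_M(t_i)$, i.e.\ misses $b_M(t_i)$ altogether. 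For $x\in B^*\cap M^*$ the remaining nodes of $\dom(f_{p_x})$ below $t_i$ have height below ${\rm ht}(\hat t_i)$, hence lie on $\sigma(t_i)$, and \cref{p3-C-Reflection} finishes the compatibility of $p_x$ with $p$. So the non-guessed branch is exploited positively, via the combinatorial \cref{Baumgartner}, to steer the choice of the reflected condition; without some version of this argument the proof does not close.
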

\begin{proof}
Assume that $p'\leq p$. Since $M\in\mathcal M_{p'}$, we may assume without loss of generality that $p'=p$.
Let $D\in M^*$ be a dense subset of $\mathbb P_T$.
We may also assume, without loss of generality, that $p\in D$.
Since $M^*$ is fixed throughout proof, we simply denote $\eta_M(t)$  by $\eta_t$.
By  \cref{existence M-support,reflection C existence}, there is an $M$-support $\sigma$ for $p$ so that 
$p\in R_p(M,\sigma)$. Observe  that $R_p(M,\sigma)\in M^*$.
Let  $\langle t_i: i< m\rangle $ enumerate $\mathscr{O}(p,M)$ so that $\eta_{t_i}\leq\eta_{t_{i+1}}$, for every $i<m-1$. Let $\langle \eta_i: i< m'\rangle$ be the strictly increasing enumeration of
$\{ \eta_{t_i}:i< m\}$. To reduce the amount of notation, we may assume that $m=m'$.   For every $i< m$, set
\[
\eta^*_i={\rm min}(M\cap(\omega_2+1)\setminus\eta_i).
\]  
Notice that $\eta_i^*<\eta_{i+1}$, for every $i<m-1$.
For every $i<m$, we let  also $\hat{t_i}$ denote $\sup(\sigma(t_i))$.
Note that $\hat{t_i}$ exists, as $t_i\in \mathscr{O}(p,M)$.
Let us call a map $ x\mapsto p_x$  from  $\mathcal P_{\omega_1}(T)$ into $ \mathbb P_T$, a $T$-assignment if  the following properties are satisfied for every $x\in \mathcal P_{\omega_1}(T)$.
\begin{enumerate}
\item $p_x\in R_p(M,\sigma)\cap D$.
\item  $ |\dom(f_{p_x})|=|\dom(f_p)|$.
\item\label{3-proper} For every $s\in{\rm dom}(f_{p_x})$ and every $i< m$, if ${\rm ht}(s)\in\big[{\rm ht}(\hat{t}_i),\eta^*_i\big)$, then 
    $${\rm sup}\{{\rm ht}(u): u\in x\cap T_{< \eta^*_{i}}\}<{\rm ht}(s).$$

\end{enumerate}

We first show that there are $T$-assignments in $M^*$.
\begin{claim}
There is a $T$-assignment in $M^*$.
\end{claim}
\begin{proof}
We observe that all the parameters in the above properties are in $M^*$.
By elementarity and the Axiom of Choice, it is enough to show that for every $x\in M^*$, there is such $p_x\in H_{\theta^*}$. 
Thus fix $x\in M^*$. We claim that $p$ is such a witness. The first item is clear by \cref{reflection C existence} and that
the second one is trivial.
To see the third one holds true, fix $i<m$ and observe that
\begin{itemize}
\item $\{{\rm ht}(u): u\in x\cap T_{< \eta^*_{i}}\}$ is bounded below $\eta_i$ (as the cofinality of $\eta^*_i$ is uncountable, $x$ is countable and $M\cap\eta^*_i=M\cap \eta_i$), and
\item  there is no node in $\dom(f_p)$  whose height lies in the interval  $\big[{\rm ht}(\hat{t}_i),\eta_i\big)$, (by the construction of $\sigma(t_i)$, see \cref{2-def-C-sigma} of \cref{def-C-sigma}.)
\end{itemize}
Thus if $s\in{\rm dom}(f_p)$ is of height at least ${\rm ht}(\hat{t}_i)$,  then ${\rm ht}(s)\geq\eta_i$, and thus 
\[
{\rm sup}\{{\rm ht}(u): u\in x\cap T_{< \eta^*_{i}}\}< \eta_i\leq {\rm ht}(s).
\]
\end{proof}

Fix a $T$-assignment $x\mapsto p_x$ in $M^*$. We shall show that there is a set $B^*\in M^*$ cofinal in $\mathcal P_{\omega_1}(T)$ such that
for every $x\in M^*\cap B^*$, $p_x$ and $p$ are compatible.
Let $n\coloneqq |{\rm dom}(f_p)|$.
For each $x\in \mathcal P_{\omega_1}(T)$, fix an enumeration of ${\rm dom}(f_{p_x})$, say 
$\langle t^x_j: j< n\rangle$.
For every $B\subseteq \mathcal P_{\omega_1}(T)$, let $$B(i,j)\coloneqq \{x\in B: {\rm ht}(t^x_j)\geq {\rm ht}(\hat{t}_i)\}.$$ 

Note that if $B\in M^*$, then $B(i,j)\in M^*$.

\begin{claim}\label{claim1-prop P}
Let $i< m$ and $j< n$. Suppose that $B\in M^*$ an unbounded subset of
$\mathcal P_{\omega_1}(T)$. Assume that $B(i,j)$ is cofinal in $B$.
Then, there is a cofinal subset $B_{i,j}$ of $B(i,j)$ in $M^*$ such that for every $x\in M^*\cap B_{i,j}$, $t^x_j\nless_T O_M(t_i)$.
\end{claim}
\begin{proof}
 Let $\Psi_i$ be the characteristic function of $b_M(t_i)$ on $T$.
 Note that $\Psi_i$ is not guessed in $M$.
 For every $x\subseteq T$, we let   $\psi^x_j:x\rightarrow 2$  be   defined by $\psi^x_j(s)=1$ if and only if $s<_T t^x_j$. Now consider the mapping $x\mapsto \psi^x_j$.
Since $\Psi_i$ is not guessed in $M$, \cref{Baumgartner} implies that there is a set $B_{i,j}\in M^*$ cofinal in $B(i,j)$ such that for every $x\in B_{i,j}$, $\psi^x_j\nsubseteq \Psi_i$. 

Assume towards a contradiction that there is  $x\in M^*\cap B_{i,j}$ with $t^x_j<_T O_M(t_i)$. Then $t^x_j\in M\cap T_{<\eta_i}$, and for every
$s\in x$  of height at least $\eta^*_i$, we have $\psi^x_j(s)=0=\Psi_i(s)$. 
Thus $\psi^x_j\nsubseteq \Psi_i$ implies that there is some $s\in T_{<\eta^*_i}\cap M$ such that $\psi^x_j(s)\neq \Psi_i(s)$.
Since $x\in B(i,j)$, we have ${\rm ht}(t^x_j)\in \big[{\rm ht}(\hat{t}_i),\eta^*_i\big)$. On the other hand, by \cref{3-proper} in the definition of a $T$-assignment, we have ${\rm ht}(s)<{\rm ht}(t^x_j)$. Thus $s<_T t^x_j$ if and only if $s\nless_T O_M(t_i)$, which contradicts $t^x_j<_T O_M(t_i)$.
\end{proof}

Returning to our main proof, let $e$ be a bijection between  $mn$ and $m\times n$. For every $k<mn$, set 
$e(k)\coloneqq(e_0(k),e_1(k))$. We build a descending sequence $\langle B_k:-1\!\leq k<mn\rangle$ of cofinal subsets of $ P_{\omega_1}(T)$ with $B_k\in M^*$ as follows. Let also $B_{-1}\coloneqq\mathcal P_{\omega_1}(T)$. Suppose that $B_k$, for $k\geq -1$, is constructed. Set $C^k\coloneqq B_k(e_0(k),e_1(k))$ and ask the following question:
\begin{itemize}
\item Is $C^k$ cofinal in $B_k$?
\end{itemize}

Then proceed as follows:
\begin{itemize}
\item If the answer to the above question is YES, then apply \cref{claim1-prop P} to $C^k$, $e_0(k+1)$ and $e_1(k+1)$  to obtain $ C^k_{e_0(k+1),e_1(k+1)}\in M^*$ as in the claim, and then set $B_{k+1}\coloneqq C^k_{e_0(k+1),e_1(k+1)}$.

\item If the answer to the above question  is NO, then let $B_{k+1}=B_k\setminus C^k$.
\end{itemize}

It is clear that $\langle B_k: -1\!\leq k< mn\rangle$ is descending and each $B_k$ is in $M^*$. Set $B^*\coloneqq B_{mn-1}$. 
Note that if $x\in C^k_{e_0(k+1),e_1(k+1)}$, then $t^x_{e_1(k+1)}\nless_T O_M(t_{e_0(k+1)})$, by \cref{claim1-prop P}.

\begin{claim}
For every $x\in B^*\cap M^*$, $p_x$ and $p$ are compatible.
\end{claim}
\begin{proof}
Fix $x\in B^*\cap M^*$. Then $p_x\in M^*\cap D$. Let $r=p_x\land p$. We claim that $r$ is a condition.
By \cref{compatibility lemma 2-C}, we only need to check if there are comparable
$s\in {\rm dom}(f_{p_x})\setminus {\rm dom}(f_p)$
and $t\in \mathscr{O}(p,M)$ such that $f_{p_x}(s)=f_p(t)$. 
We shall see that it  does not happen. Thus assume towards a contradiction that there are such  $t$ and $s$. Then $t=t_i$ and $s=t^x_j$, for some $i< m$ and $j< n$. 
Note that $f_{p_x}(s),t^x_j\in M$, as $x\in M^*$.  Observe that if $t_i\leq_T t^x_j$, then $t_i$ is guessed in $M$, and hence it belongs to $M$ by \cref{4} of \cref{main forcing 2}, which is a contradiction. Thus $t^x_j<_T t_i$, which in turn implies that $t^x_j\in b_M(t_i)$ (recall that $O_M(t_i)$ is not guessed in $M$.)
 Since $f_{p_x}(s)=f_p(t)$ and  $p_x\in R_p(M,\sigma)$,   \cref{p3-C-Reflection} in \cref{M-ref C def} implies that ${\rm ht}(t^x_j)\nless {\rm ht}(\hat{t}_i)$. Thus ${\rm ht}(t^x_j)\geq {\rm ht}(\hat{t}_i)$.
Let $k\geq 0$ be such that $e(k)=(i,j)$.
Since $x\in B^*\subseteq B_k\subseteq B_{k-1}$ and that ${\rm ht}(t^x_j)\geq {\rm ht}(\hat{t}_i)$, we  have   
$B_k=C^{k-1}_{i,j}$, but then  $t^x_j\nless_T O_M(t_i)$ by \cref{claim1-prop P}, which is a contradiction since $t^x_j\in b_M(t_i)$  implies that $t^x_j<_T O_M(t_i)$.
\end{proof}

\end{proof}

\begin{remark}
Note that  to find the cofinal set $B^*$ in the above proof, we could start with any set which is cofinal in $\mathcal P_{\omega_1}(T)$.
\end{remark}

\begin{corollary}\label{properness-P}
$\mathbb P_T$ is proper.
\end{corollary}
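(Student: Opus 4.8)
The plan is to read off properness from the genericity lemma \cref{genericity-C}, using \cref{Mtop-P} to produce the extension; the substantive work has already been done, and what remains is bookkeeping about elementary submodels.

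Fix a sufficiently large regular cardinal $\theta^*$ and a countable $M^*\prec H_{\theta^*}$ with $\mathbb P_T,T,\theta\in M^*$, and let $p\in\mathbb P_T\cap M^*$; checking properness against such $M^*$ suffices. First I would put $M\coloneqq M^*\cap H_\theta$ and observe that $M\in\mathcal E^0$: since $H_\theta\in M^*$ (because $\theta\in M^*$) the standard argument gives $M\prec H_\theta$, the model $M$ is countable, and $T\in M$, so $M$ is a countable elementary submodel of $(H_\theta,\in,T)$, i.e.\ $M\in\mathcal E^0$. Moreover $p\in H_\theta$, since a condition is a pair consisting of a finite $\in$-chain drawn from $\mathcal E^0\cup\mathcal E^1\subseteq H_\theta$ together with a finite partial function from $T$ to $\omega$; hence $p\in M^*\cap H_\theta=M$, and so $p\in M\cap\mathbb P_T$.

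Next I would apply \cref{Mtop-P} to $M$ and $p$, obtaining a condition $p^M=(\mathcal M^M_p,f_p)$ with $p^M\leq p$ and $M\in\mathcal M_{p^M}$. Since $M=M^*\cap H_\theta$ belongs to the side-condition part of $p^M$, \cref{genericity-C} applies to $p^M$ and $M^*$ and yields that $p^M$ is $(M^*,\mathbb P_T)$-generic. As $p^M\leq p$, and as $M^*$ was an arbitrary countable elementary submodel of a sufficiently large $H_{\theta^*}$ containing the relevant parameters, every condition in every such $M^*$ has an $(M^*,\mathbb P_T)$-generic extension; that is, $\mathbb P_T$ is proper.

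No genuine obstacle remains at this stage: the only points to verify are that $M^*\cap H_\theta$ is a countable elementary submodel of $(H_\theta,\in,T)$ still containing $p$, and that $M\in\mathcal M_{p^M}$, both immediate from the definitions and from \cref{Mtop-P}. The difficulty of the argument was concentrated in \cref{genericity-C} --- and within it, in the use of \cref{Baumgartner} to build a cofinal family $B^*$ of countable subsets of $T$ along which the $(M,\sigma)$-reflections of $p$ stay compatible with $p$, together with the compatibility lemmas \cref{closed under f lemma,compatibility lemma 1-C,compatibility lemma 2-C} ensuring that the amalgam is a genuine condition.
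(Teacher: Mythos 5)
Your argument is correct and is essentially identical to the paper's own proof: fix a countable $M^*\prec H_{\theta^*}$ containing the relevant parameters, set $M=M^*\cap H_\theta\in\mathcal E^0$, pass to $p^M$ via \cref{Mtop-P}, and invoke \cref{genericity-C}. (One immaterial slip: elements of $\mathbb S_\omega(T)$ are finite partial functions from $T$ into $\omega_1$, not into $\omega$, but this does not affect the observation that $p\in H_\theta$.)
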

\begin{proof}
Let $\theta^*$ be a sufficiently large regular cardinal.
Assume that $M^*\prec H_{\theta^*}$ is countable and contains $H_\theta,T, \mathcal E^0$ and $\mathcal E^1$. Set $M=M^*\cap H_{\theta}$, and let $p\in M^*$ be a condition. 
Notice that the set of such models is a club in $\mathcal P_{\omega_1}(H_{\theta^*})$.
By \cref{Mtop-P}, 
$p^M$ is a condition with $p^M\leq p$ such that $M\in\mathcal M_{p^M}$.
Now, \cref{genericity-C} guarantees that $p^M$ is $(M^*,\mathbb P_T)$-generic. Thus $\mathbb P_T$ is proper.
\end{proof}

We shall use  the above strategy and \cref{Baumgartner} to show that $\mathbb P_T$ has the $\omega_1$-approximation property.

\begin{proposition}\label{approx}
$\mathbb P_T$ has the $\omega_1$-approximation property.
\end{proposition}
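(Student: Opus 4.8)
The plan is to run the argument of \cref{genericity-U} once more and, at the point where an obstruction to guessing could appear, re-run the density argument of \cref{genericity-C} together with \cref{Baumgartner}; the underlying point is that the $\mathcal{E}^1$-models in the side conditions are genuine $\omega_1$-guessing models, so a ``fresh'' $\omega_1$-approximated set cannot survive below an appropriately generic condition. Suppose toward a contradiction that $\mathbb{P}_T$ fails the $\omega_1$-approximation property. Then there are a condition $p$, an ordinal $\lambda$, and a name $\dot A$ such that $p$ forces that $\dot A\subseteq\lambda$, that every countable restriction of $\dot A$ lies in the ground model, yet that $\dot A$ itself does not (a routine reduction lets us take $\dot A$ to name a set of ordinals; for $\lambda<\omega_2$ the whole set $\lambda$ sits inside the model $N$ produced below, and for larger $\lambda$ one combines the argument over many models of the generic sequence by an induction on $\lambda$ — this bookkeeping I regard as routine). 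Using ${\rm GM}^*(\omega_2)$ fix $N^*\prec H_{\theta^*}$ of size $\aleph_1$ with $p,\dot A,\lambda,\mathbb{P}_T,\mathcal{E}^0,\mathcal{E}^1,\theta\in N^*$ and $N:=N^*\cap H_\theta\in\mathcal{E}^1$; such $N^*$ form a stationary set and $p\in N^*\cap H_\theta=N$. Put $q:=p^N$, which by \cref{Mtop-P} is a condition extending $p$ with $N\in\mathcal{M}_q$, and which by \cref{genericity-U} is $(N^*,\mathbb{P}_T)$-generic.

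First I would check that $q$ forces the approximations of $\dot A$ along $N$ to land inside $N$. For a countable $a\in N$, let $D_a$ be the set of conditions $r\le p$ that either decide $\dot A\cap a$ to equal some ground-model set or are incompatible with $p$; the hypothesis that every countable restriction of $\dot A$ is in the ground model is exactly what makes $D_a$ dense, and $D_a\in N^*$ since its parameters are. By $(N^*,\mathbb{P}_T)$-genericity, $D_a\cap N^*$ is predense below $q$, so $q$ forces that $\dot A\cap a$ is computed by some condition of $N^*$, and by elementarity of $N^*$ the computed value belongs to $N$. Hence $q$ forces that $\dot A\cap a\in N$ for every countable $a\in N$; in other words, writing $\dot x$ for a name for $\dot A\cap N$, $q$ forces $\dot x$ to be a bounded subset of $N$ that is $\omega_1$-approximated in $N$ in the sense of item (1) of \cref{guessing-def}.

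The hard part will be to deduce that $q$ forces $\dot x$ to be guessed in $N$, i.e. $q\Vdash``\dot x\in N"$; granting this, $q$ forces $\dot A\cap N\in N\subseteq V$, and recovering $\dot A$ from its traces on cofinally many $\mathcal{E}^1$-models (each treated exactly as above) contradicts that $p$ forces $\dot A\notin V$. One cannot simply appeal to $N$ being an $\omega_1$-guessing model, since ``guessing'' is a property of $N$ relative to sets of the ground model while $\dot x$ names a possibly fresh set. Instead, suppose some $q_0\le q$ forces that $\dot x$ is $\omega_1$-approximated in $N$ but not guessed in $N$ — hence, since $N$ is guessing in $V$, forces $\dot x\notin V$ — and derive a contradiction by the method of \cref{genericity-C}: choose a countable $M^*\prec H_{\theta^*}$ with $N,\dot A,q_0,T,\theta\in M^*$ and $M:=M^*\cap H_\theta\in\mathcal{E}^0$, pass to $q_0^M$, and build inside $M^*$ a family $z\mapsto r_z$ $(z\in\mathcal{P}_{\omega_1}(T))$ of $(M,\sigma)$-reflections of $q_0$ that in addition decide $\dot A$ on $z$ and on ever larger countable subsets of $N$; then apply \cref{Baumgartner}, with the function there taken to be one coding, alongside the side-condition data of $q_0$, the characteristic function of the forced-to-be-fresh set $\dot x$ relative to $N\cap M$, to produce $B^*\in M^*$ cofinal in $\mathcal{P}_{\omega_1}(T)$ with $r_z$ compatible with $q_0$ whenever $z\in B^*\cap M^*$. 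Tracing this through as in \cref{genericity-C} produces a guess of $\dot x$ in $N$, contradicting the choice of $q_0$.

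I expect the only genuine obstacle to be this last density argument: one must carry the reflection machinery of \cref{genericity-C} while simultaneously tracking decisions about $\dot A$, so that the non-guessed function fed into \cref{Baumgartner} encodes the fresh set $\dot x$ and compatibility of the $r_z$ with $q_0$ becomes incompatible with $\dot x$ being fresh — in effect, showing that the specialising functions together with the $\mathcal{E}^0$-side conditions cannot conspire to add a fresh $\omega_1$-approximated subset of an $\omega_1$-guessing $\mathcal{E}^1$-model below $q$. The remaining ingredients — the reduction to $\dot A\subseteq\lambda$, the induction on $\lambda$, and the passage from ``$\dot A\cap N\in V$ for the relevant $N$'' to ``$\dot A\in V$'' — I expect to be routine.
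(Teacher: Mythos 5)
There is a genuine gap, and it sits exactly where you flag ``the only genuine obstacle'': that obstacle is the entire content of the proof, and your sketch of how to overcome it points in the wrong direction. First, the detour through an $\aleph_1$-sized guessing model $N$ buys nothing: once $\lambda\subseteq N$ (the case you must handle before any induction), the statement ``$q_0$ forces $\dot x=\dot A\cap N$ to be $\omega_1$-approximated but not guessed in $N$, hence fresh'' is just a restatement of the original hypothesis on $\dot A$, so the ``hard part'' you defer is the whole proposition over again. The paper works entirely at the level of a \emph{countable} $M^*\prec H_{\theta^*}$: since $M^*\cap X$ is a countable ground-model set and $\dot A$ is countably approximated, one can extend $p^{M}$ to a condition $q$ that decides the restriction $\dot f\rest_{M^*}$ of the characteristic function to be a single concrete ground-model function $g$. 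The crucial claim — proved by a density argument combining $p\Vdash\dot A\notin V$ with the $(M^*,\mathbb P_T)$-genericity of $q$ from \cref{genericity-C} — is that $g$ is \emph{not guessed in the countable model $M^*$}. That is the hypothesis \cref{Baumgartner} needs: a ground-model $\{0,1\}$-valued function on $Z\cap M^*$ not guessed in $M^*$. Your proposed input to \cref{Baumgartner}, ``the characteristic function of the forced-to-be-fresh set $\dot x$ relative to $N\cap M$'', is a name rather than a ground-model function, and freshness over $N$ is not the non-guessing-in-$M^*$ that the lemma requires; you never identify $g$ nor prove the non-guessing claim.

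Second, the logic of the endgame is inverted in your sketch. \Cref{Baumgartner} is applied to the assignment $x\mapsto g_x$ (where $q_x\Vdash g_x\rest_x=\dot f\rest_x$) to produce a cofinal $B\in M^*$ on which $g_x\nsubseteq g$, i.e.\ the decided values \emph{disagree} with what $q$ forces; the separate compatibility machinery of \cref{genericity-C} (via \cref{claim1-prop P}, itself another use of \cref{Baumgartner} on the tree data) then yields some $x\in B\cap M^*$ with $q_x$ compatible with $q$, and the contradiction is that a common extension would force $\dot f$ to agree with both $g$ and $g_x$. Your version claims the single application of \cref{Baumgartner} yields the cofinal set of \emph{compatible} $r_z$ and that the argument ``produces a guess of $\dot x$ in $N$''; neither is what happens, and as written the step does not close. (A secondary point: the paper replaces your ``induction on $\lambda$'' by reducing to $T\subseteq X$ or $X\subseteq T$ and restricting the cofinal family from $\mathcal P_{\omega_1}(X)$ to $\mathcal P_{\omega_1}(T)$, which is needed because the assignment machinery lives on $\mathcal P_{\omega_1}(T)$; your bookkeeping would have to address this as well.)
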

\begin{proof}
Assume towards a contradiction that $\dot A$ is a $\mathbb P_T$-name such that for some $p\in\mathbb P_T$ and some $X\in V$, we have
\begin{itemize}
\item  $p\Vdash ``\dot{A}\subseteq \check{X}"$,
\item $p\Vdash ``\dot{A}\notin V"$, and
\item $p\Vdash``\dot{A} \mbox{ is countable approximated in } V"$, i.e., for every countable set $a\in V$, $p\Vdash ``\dot{A}\cap\check{a}\in V"$.
\end{itemize}

 Without loss of generality, we may work with a $\mathbb P_T$-name for the characteristic function of $\dot{A}$, say $\dot{f}$. We may also, without loss of generality, assume that either $T\subseteq X$ or $X\subseteq T$. To see this, observe that by passing to an isomorphic copy of $T$, we may assume that the underlying set of $T$ is $|T|$.
 On the other hand, using a bijection between  $X$ and $|X|$, we can assume that the domain of $\dot f$ is forced to be $|X|$.
 As $|X|$ and $|T|$ are comparable, we may assume that either $T\subseteq X$ or $X\subseteq T$. 
 
Let us assume that $T\subseteq X$, the other case is proved similarly.
Let $\theta^*$ be a sufficiently large regular cardinal. Let $M^*\prec H_{\theta^*}$ be a countable model containing all the relevant objects, including $p$.  Set $M=M^*\cap H_\theta$.
We can extend $ p^M$ to a condition $q$ such that $q$ decides
$\dot{f}\rest_{M^*}$, i.e., for some  function $g:M^*\cap X\rightarrow 2$ in $V$,  $q\Vdash ``\dot{f}\rest_{M^*}=\check{g}"$.
\begin{claim}
$g$ is not guessed in $M^*$.
\end{claim} 
\begin{proof}
Suppose that $g$ is guessed in $M^*$. Let $g^*\in M^*$ be such that $g^*\cap M^*=g$. Set
\[
D=\{r\leq p: \exists x\in X ~~r\Vdash ``g^*(x)\neq \dot{f}(x)"\}\cup\{r\in \mathbb P_T: r\perp p\}.
\]
Obviously $D\in M^*$. We use elementarity to show that $D$ is dense in $\mathbb P_T$. Thus let $r\in M^*\cap \mathbb P_T$.
We may assume that $r$ is compatible with $p$. Thus, there is $s\in M\cap\mathbb P_T$ such that $s\leq p,r$.
Since $p\forces ``\dot{f}\notin V"$, there is $x\in M^*\cap X$ and there is $s'\leq s$ in $M^*$ such that $s'\Vdash ``g^*(x)\neq\dot{f}(x)"$. Thus $s'\in D\cap M$.

On the other hand, by \cref{genericity-C}, $q$ is $(M^*,\mathbb P_T)$-generic. Thus, there is $u\in D\cap M^*$ such that $u||q$. But then $u||p$, and thus
there is $x\in M^*\cap X$  such that $u\Vdash ``g^*(x)\neq\dot{f}(x)"$. This is impossible, as $q\Vdash g^*(x)=g(x)=\dot{f}(x)$.

\end{proof}

Fix an $M$-support set $\sigma$ for $q$.
As in the proof of \cref{genericity-C}, we can find, in $M^*$, 
  a function $x\mapsto (q_x,g_x)$   on $\mathcal P_{\omega_1}(X)$ such that:
 \begin{enumerate}
\item $q_x\in R_p(M,\sigma)$
\item  $ |\dom(f_{q_x})|=|\dom(f_q)|$.
\item For every $s\in{\rm dom}(f_{q_x})$  and every $i< m$, if ${\rm ht}(s)\in\big[{\rm ht}(\hat{t}_i),\eta^*_i\big)$, then 
    $${\rm sup}\{{\rm ht}(u): u\in x\cap T_{< \eta^*_{i}}\}<{\rm ht}(s).$$
 
     \item $g_x:{\rm dom}(g_x)\rightarrow 2$ is a function with countable domain containing $x$ as a subset. 
     \item $q_x\Vdash g_x\rest_x=\dot{f}\rest_x$.
 \end{enumerate}
 Here, $\eta_i$,  $\eta^*_i$ and $\hat{t}_i$ are  as in the proof of \cref{genericity-C}. Note that to find an assignment in $M^*$, observe that if $x\in M^*$, then $x\subseteq \dom(g)$, and thus we can use $(q,g)$ as a witness.
Since, we assumed $T\subseteq X$ and by the above claim $g$ is not guessed in $M^*$, we first apply \cref{Baumgartner} to find a set $B\in M^*$, cofinal in $\mathcal P_{\omega_1}(X)$, such that for every $x\in B$, $g_x\nsubseteq g$.
 Now let $C$ be the restriction of $B$ to $T$, i.e.,
 $C=\{x\cap T:x\in B\}$. Then $C$ is cofinal in $\mathcal P_{\omega_1}(T)$. Using the Axiom of Choice, for each $c\in C$, pick  $x_c\in B$  such that $x_c\cap T=c$. Fix such a choice function $c\mapsto x_c$ in $M^*$ and
 consider the assignment $c\mapsto q_{x_c}$.
By the above properties, $c\mapsto q_c=q_{x_c}$ is a $T$-assignment in $M^*$. Thus, as in \cref{genericity-C}, there is some $c\in C\cap M^*$ such that $q_{c}$ is compatible with $q$. There exists $x\in B\cap M^*$ with $x_c=c$, but this is a contradiction, as $g_x\nsubseteq g$ implies that $q_{x_c}=q_c$ is not compatible with $q$!

\end{proof}

\begin{lemma}\label{domain}
Suppose that $p\in \mathbb P_T$ and $t\in T$. 
Then there is some $q\leq p$ such that $t\in {\rm dom}(f_q)$.
\end{lemma}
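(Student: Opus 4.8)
The plan is to extend $p$ by adding a single value to $f_p$ at the node $t$, choosing the value carefully so that the constraints \cref{closed under f} and \cref{4} of \cref{main forcing 2} are preserved. If $t$ is already in $\dom(f_p)$ there is nothing to do, so assume $t\notin\dom(f_p)$. The candidate condition will be $q=(\mathcal M_p, f_p\cup\{(t,\xi)\})$ for a suitable colour $\xi<\omega_2$; the side-condition part $\mathcal M_p$ is untouched, so \cref{1} is automatic, and we only need to pick $\xi$ so that \cref{2}, \cref{closed under f} and \cref{4} all hold.

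First I would identify the constraints on $\xi$. For \cref{2} (the specialising property of $\mathbb S_\omega(T)$), since $\dom(f_p)$ is finite, only finitely many colours are forbidden: we need $\xi\neq f_p(s)$ for every $s\in\dom(f_p)$ comparable with $t$. For \cref{closed under f}, we need: for every $M\in\mathcal E^0_p$ with $t\in M$, the value $\xi$ lies in $M$; there are only finitely many such models, and the intersection $\bigcap\{M\in\mathcal E^0_p: t\in M\}$ still contains unboundedly many ordinals below $\omega_2$ (each such $M$ has $M\cap\omega_2$ an ordinal, and $\mathcal E^0_p$ is a finite $\in$-chain, so the least such $M$, call it $M_t$, already satisfies $\xi\in M_t\Rightarrow \xi\in M$ for all the others because they sit above $M_t$ in $\in^*$ and $M_t\in M$), so this is a ``large'' set of permissible $\xi$. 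For \cref{4}, we need: for every $M\in\mathcal E^0_p$ such that $\xi\in M$ and $t$ is guessed in $M$ but $t\notin M$, a contradiction — i.e.\ we must ensure that whenever $\xi\in M$ and $t$ is guessed in $M$, then in fact $t\in M$. The danger is a model $M\in\mathcal E^0_p$ in which $t$ is guessed but $t\notin M$; for such $M$ we must choose $\xi\notin M$.

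So the recipe is: let $\mathcal G$ be the (finite) set of $M\in\mathcal E^0_p$ in which $t$ is guessed but which do not contain $t$, and let $M_t$ (if it exists) be the $\in^*$-least model of $\mathcal E^0_p$ containing $t$. Note that every $M\in\mathcal G$ is $\in^*$-below $M_t$ whenever $M_t$ exists — indeed if $M_t\in^* M$ then $M_t\in M$ (or $M_t\subseteq M$), forcing $t\in M$, contradicting $M\in\mathcal G$ — so $\sup(M\cap\omega_2)\le \sup(M_t\cap\omega_2)$ but more usefully $M\subseteq M_t$ or at least $M\cap\omega_2 < $ something inside $M_t$; in any case $M_t$ sees an ordinal $\xi$ above $\sup\{\sup(M\cap\omega_2): M\in\mathcal G\}$ and also above $\{{\rm ht}(s): s\in\dom(f_p)\}$, by elementarity of $M_t$ and since all these parameters belong to $M_t$. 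Pick such a $\xi\in M_t$ (if $M_t$ does not exist, pick any $\xi<\omega_2$ above all those finitely many ordinals, with no requirement of membership). Then $\xi\notin M$ for every $M\in\mathcal G$, so \cref{4} cannot fail via any model of $\mathcal G$; for $M\in\mathcal E^0_p\setminus\mathcal G$ in which $t$ is guessed we have $t\in M$ by definition of $\mathcal G$, so \cref{4} holds trivially; \cref{closed under f} holds because any $M\in\mathcal E^0_p$ with $t\in M$ contains $M_t$ (or equals it, or $M_t\in^* M$), hence contains $\xi$; and \cref{2} holds because $\xi$ was chosen to exceed the heights of all nodes of $\dom(f_p)$, so $\xi$ differs from every $f_p(s)$ that could possibly collide — actually to be safe one should additionally thin out the finitely many forbidden colour \emph{values} $f_p(s)$, which is harmless since we are choosing $\xi$ from an unbounded subset of the ordinals visible in $M_t$.

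The main obstacle is the bookkeeping in \cref{4}: one must be sure that the only way \cref{4} could be violated by the new pair $(t,\xi)$ is through a model $M\in\mathcal E^0_p$ that guesses $t$ but omits it, and that all such models sit $\in^*$-below any model containing $t$, so that a single ordinal $\xi\in M_t$ simultaneously (a) lies in every model containing $t$ and (b) lies outside every model of $\mathcal G$. This uses the structure of $\mathcal M_p$ as a finite $\in$-chain closed under intersections, together with the basic fact (\cref{eta in M}, \cref{guessing lemma}) about how guessing propagates, but once that structural observation is in place the choice of $\xi$ is immediate and the verification of \cref{1}--\cref{4} is routine. One should also remark that no new models are added, so \cref{closed under f} and \cref{4} need only be rechecked at the single new node $t$ against the pre-existing (finitely many) models, which is exactly what the choice of $\xi$ handles.
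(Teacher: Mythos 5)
Your overall plan (add the single pair $(t,\xi)$, drawing $\xi$ from the $\in^*$-least model $M_t$ containing $t$ and above the traces of the problematic models) is the right one, but two things go wrong. First, the colours of a specialising function for a tree of height $\omega_2$ live in $\omega_1$, not $\omega_2$. This is not merely cosmetic: for a countable $M$ the set $M\cap\omega_1$ is an ordinal, so ``$\xi\notin M$'' is exactly ``$\xi\geq M\cap\omega_1$'' and can be arranged by exceeding a finite supremum of countable ordinals; whereas $M\cap\omega_2$ is \emph{not} an ordinal, so choosing $\xi<\omega_2$ above $\sup(M\cap\omega_2)$ is neither necessary nor, in general, possible inside $M_t$ (and exceeding heights of nodes has nothing to do with avoiding colour collisions). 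The paper's choice is $\nu\in M_t\cap\omega_1$ with $\nu>\max\{N\cap\omega_1:N\in\mathcal E^0_p\cap M_t\}$ and $\nu\notin\ran(f_p)$.

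Second, and more seriously, your key structural claim --- that every $M\in\mathcal G$ (guessing $t$ but omitting it) lies $\in^*$-below $M_t$ because ``$M_t\in^* M$ forces $M_t\in M$ or $M_t\subseteq M$'' --- is false for countable models: by \cref{smallremark}, $M_t\in^* M$ fails to give $M_t\subseteq M$ precisely when there is $P\in\mathcal E^1_p$ with $P\cap M\in^* M_t\in P\in M$, and in that configuration one can have $t\in M_t\subseteq P$, $t\notin M$, yet $t$ guessed in $M$. This is exactly the hard case of the lemma, and your recipe does not address it. The paper handles it in two steps: (i) it shows no $N\in^* M_t$ with $\nu\in N$ can guess $t$ (else $t\in P\cap M_t$ for some $P\in\mathcal E^1_p\cap M_t$, contradicting minimality of $M_t$); (ii) for $M$ above $M_t$ with $M_t\nsubseteq M$ it uses \cref{guessing lemma} to push the guessing of $t$ down into $P\cap M\in^* M_t$, notes $\nu\in P\cap M$ since $(P\cap M)\cap\omega_1=M\cap\omega_1$, and derives a contradiction with (i). Without the guessing lemma and the observation that the $\omega_1$-traces of the ``high'' models of $\mathcal G$ reappear below $M_t$ as intersections with $\mathcal E^1$-models, the single ordinal $\xi$ you pick cannot be guaranteed to avoid all of $\mathcal G$ while lying in every model containing $t$.
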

\begin{proof}
Assume that $t$ is not in ${\rm dom }(f_p)$. If $t$  is not in any  model belonging to $\mathcal E^0_p$, then pick $\nu$ below $\omega_1$ and different from the values of $f_p$ such that
 \[
 \nu>{\rm max}\{M\cap\omega_1: M\in\mathcal{E}^0_p\},
 \]
and  then set  $q=(\mathcal M_p, f_p\cup\{(t,\nu)\})$. Then \cref{1} of \cref{main forcing 2} is easily fulfilled, \cref{2} holds true as $\nu\notin{\rm rang}(f_p)$.
\cref{closed under f} is obvious as $t$ does not belong to any model in $\mathcal M_q=\mathcal M_p$. Finally,  \cref{4}  is fulfilled, since  $f_q(t)=\nu$ belongs to no  model in $\mathcal E^0_q=\mathcal E^0_p$.

Now assume that there are some models in $\mathcal E^0_p$ containing $t$.
Let $M$ be the least countable model in $\mathcal M_p$ with $t\in M$.
Let $\nu\in M\cap\omega_1\setminus {\rm ran}(f_p)$ be such that $$\nu>{\rm max}\{N\cap\omega_1: N\in\mathcal{E}^0_p\cap M\}.$$
Set $q=(\mathcal M_,f_p\cup\{(t,\nu)\})$.
We claim that $q$ is a condition.
As in the previous case, \cref{1,2} of \cref{main forcing 2} hold true, thus
we only need to check \cref{closed under f,4}.\\

\textbf{\cref{closed under f}:} Assume that $N\in\mathcal{E}^0_p$ contains
$t$. By the minimality of $M$, $M\in^* N$. We claim that $M\subseteq N$.
Suppose this is not the case. Thus there is some $P\in\mathcal{E}^1_p$ such that
$P\cap N\in^* M\in P\in N$, but then $t\in P\cap N$, which contradicts the minimality of $M$. Thus $M\subseteq N$, and hence  $\nu\in M\subseteq N$.\\

\textbf{\cref{4}:} Suppose that $N\in\mathcal{E}^0_p$ is such that $\nu\in N$ and $t$ is guessed in $N$.
We shall show that $M\subseteq N$, and hence $t\in N$.
We first show that  $N\in^* M$ is impossible. To see this, observe that   $N\notin M$ by our choice of  $\nu$.
Thus  if $N\in^* M$, then there is some $P\in\mathcal{E}^1_p\cap M$ such that $N\in [P\cap M,P)_p$. Now
$t$ belongs to $P$ as it is guessed in $N\subseteq P$, and thus $t\in P\cap M$, which contradicts the minimality of $M$.

Now if $M\nsubseteq N$, there is $P\in\mathcal M_p$ such that $P\cap N\in^* M\in P\in N$.
Then  since $t\in P$ is guessed in $N$, by \cref{guessing lemma}, $t$ is  guessed in $P\cap N$. Notice that $\nu\in P\cap N\in^* M $, which is a contradiction as $P\cap N\in^* M$, as is was shown in the previous paragraph.
\end{proof}

\begin{remark}
Notice that $\mathbb P_T$
 forces $|H_{\theta}|=|T|=\aleph_2$.
 \end{remark}

\section{Conclusion}\label{sec5}
In this section, we prove our main theorem.

\begin{theorem}\label{main theorem}
Assume that  ${\rm GM}^*(\omega_2)$ holds. Then, every tree of height $\omega_2$ without cofinal branches is specialisable via a proper and $\aleph_2$-preserving forcing with finite conditions. Moreover, the forcing has the $\omega_1$-approximation property.
\end{theorem}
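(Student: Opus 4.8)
The plan is to assemble the pieces built up in \cref{sec3,sec4} and verify that the generic object is a total specialising function. Let $T$ be a tree of height $\omega_2$ without cofinal branches. First I would apply \cref{tree iso} to reduce to the case where $T$ is Hausdorff: there is a Hausdorff tree $T^*$ of the same height with a weak embedding $T\to T^*$, and since the inverse image of an antichain under a weak embedding is an antichain (and weak embeddings are absolute), a specialising function on $T^*$ yields one on $T$. So assume from now on that $T$ is Hausdorff. Fix a regular $\theta$ with $\mathcal P(T)\in H_\theta$, let $\mathcal E^0=\mathcal E^0(T)$ and $\mathcal E^1=\mathcal E^1(T)$ consist of the countable elementary submodels and the $\omega_1$-guessing elementary $\rm IC$-submodels of $(H_\theta,\in,T)$ respectively, and form $\mathbb P_T=\mathbb P_T(\mathcal E^0,\mathcal E^1)$ as in \cref{main forcing 2}. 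Note ${\rm GM}^*(\omega_2)$ is exactly what guarantees that $\mathcal E^1$ is rich enough (stationarily many models of size $\aleph_1$), which is used in \cref{aleph2 preserv}.

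The forcing-theoretic properties are already established: $\mathbb P_T$ is proper by \cref{properness-P}, it preserves $\aleph_2$ by \cref{aleph2 preserv}, and it has the $\omega_1$-approximation property by \cref{approx}. Moreover each condition $p=(\mathcal M_p,f_p)$ has a finite working part $f_p\in\mathbb S_\omega(T)$ and a finite side condition $\mathcal M_p$, so $\mathbb P_T$ is a forcing with finite conditions. It therefore remains only to check that forcing with $\mathbb P_T$ genuinely specialises $T$. Let $G$ be $\mathbb P_T$-generic over $V$ and set $f_G=\bigcup\{f_p:p\in G\}$. Since any two conditions in $G$ are compatible and their working parts are ordered by reverse inclusion, $f_G$ is a well-defined partial function from $T$ into $\omega_1$; and since each $f_p$ lies in $\mathbb S_\omega(T)$, whenever $s<_T t$ both lie in $\dom{f_G}$ we have $f_G(s)\neq f_G(t)$, i.e.\ $f_G$ is a specialising function on its domain. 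That $\dom{f_G}=T$ follows from \cref{domain}: for each $t\in T$ the set $D_t=\{q\in\mathbb P_T: t\in\dom{f_q}\}$ is dense and belongs to $V$, so $G$ meets it; hence $f_G\colon T\to\omega_1$ is total.

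Finally I would translate this to the conclusion that $T$ is special in $V[G]$. As $\mathbb P_T$ is proper it preserves $\omega_1$, and by \cref{aleph2 preserv} it preserves $\aleph_2$; thus in $V[G]$ the tree $T$ still has height $\omega_2$, and $f_G\colon T\to\omega_1$ witnesses that $T$ is special (the size of $T$ may be collapsed to $\aleph_2$ along the way, which is permitted in our notion of specialisability). Combining this with the properties listed above, $\mathbb P_T$ is the required proper, $\aleph_2$-preserving forcing with finite conditions and the $\omega_1$-approximation property which specialises $T$, completing the proof; in particular, by \cref{fact Viale-Weiss}, the same conclusion holds under $\rm PFA$. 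There is essentially no remaining obstacle at this stage—the real work was \cref{properness-P,aleph2 preserv,approx}—so the only points requiring care are the absoluteness in the Hausdorff reduction and the bookkeeping ensuring $f_G$ is total with values confined to $\omega_1$.
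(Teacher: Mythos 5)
Your proposal is correct and follows the paper's own argument essentially verbatim: reduce to a Hausdorff tree via \cref{tree iso}, invoke \cref{properness-P}, \cref{aleph2 preserv} and \cref{approx} for the forcing-theoretic properties, and use the density of the sets from \cref{domain} to see that $f_G=\bigcup\{f_p:p\in G\}$ is a total specialising function. No further comment is needed.
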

\begin{proof}
By \cref{tree iso}, we may also assume  that $T$ is a Hausdorff  tree.
By \cref{properness-P,aleph2 preserv}, $\mathbb P_T$ preserves $\aleph_1$ and $\aleph_2$, respectively. Let
$G\subseteq\mathbb P_T$ be $V$-generic filter, and set
$$f_G=\bigcup\{f_p:p\in G\}.$$
By
\cref{domain}, $f_G:T\rightarrow \omega_1$ is a total function on $T$. It is clear that $f_G$ is a specialising function on $T$.
\end{proof}

Since $\rm PFA$ implies ${\rm GM}^*(\omega_2)$ by \cref{fact Viale-Weiss}, we obtain the following corollary.
\begin{corollary}
Assume $\rm PFA$. Suppose $T$ is a tree of height $\omega_2$ without cofinal branches. Then there is a proper and $\aleph_2$-preserving forcing with the $\omega_1$-approximation property such that $T$ is special in generic extensions by
$\mathbb P_T$.
\end{corollary}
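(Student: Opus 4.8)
The plan is to deduce the Corollary from \cref{main theorem} and \cref{fact Viale-Weiss}: since $\rm PFA$ implies ${\rm GM}^*(\omega_2)$, it is enough, for a fixed tree $T$ of height $\omega_2$ with no cofinal branch, to build a proper, $\aleph_2$-preserving forcing with the $\omega_1$-approximation property whose generic adds a function $f\colon T\to\omega_1$ with $f(s)\neq f(t)$ whenever $s<_T t$. By \cref{tree iso} I may assume $T$ is Hausdorff; this is what lets one pass freely between a branch and the unique node of a prescribed height lying on it.

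First I would fix a regular $\theta$ with $\mathcal P(T)\in H_\theta$ and let $\mathcal E^0$ consist of the countable elementary submodels of $(H_\theta,\in,T)$ and $\mathcal E^1$ of the internally club $\omega_1$-guessing elementary submodels; under ${\rm GM}^*(\omega_2)$ the family $\mathcal E^1$ is stationary in $\mathcal P_{\omega_2}(H_\theta)$. A condition of $\mathbb P_T$ is a pair $p=(\mathcal M_p,f_p)$ in which $\mathcal M_p$ is a finite $\in$-chain from $\mathcal E^0\cup\mathcal E^1$ closed under intersections (a condition of the pure side-condition forcing $\mathbb M$ of Neeman and Veličković) and $f_p$ is a finite partial specialising function, subject to two coherence demands against each countable $M\in\mathcal M_p$: that $f_p$ maps ${\rm dom}(f_p)\cap M$ into $M$, and that if $t\in{\rm dom}(f_p)$ with $f_p(t)\in M$ lies on some non-cofinal $T$-branch belonging to $M$ (``$t$ is guessed in $M$''), then already $t\in M$; conditions are ordered coordinatewise by reverse inclusion.

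The verification then splits into five tasks. (i) $\mathbb P_T$ is a poset, and the operations of adjoining a model $M\ni p$ to the side condition and of restricting $p$ to a model of $\mathcal M_p$ stay within $\mathbb P_T$ — routine, using the basic facts about $\mathbb M$ and the two coherence demands. (ii) Properness: for countable $M^*\prec H_{\theta^*}$ with $M:=M^*\cap H_\theta\in\mathcal E^0$, the condition obtained from $p$ by adding $M$ is $(M^*,\mathbb P_T)$-generic. (iii) $\aleph_2$-preservation: the analogous statement with $M\in\mathcal E^1$, using that such $M$ has size $\aleph_1$, contains $\omega_1$, and is $\omega_1$-guessing; strong genericity for these models then gives preservation of $\aleph_2$. (iv) The $\omega_1$-approximation property: rerun (ii) against a name purporting to code a non-ground-model, countably approximated set. (v) Totality of $f_G=\bigcup_{p\in G}f_p$: a one-step density argument extending ${\rm dom}(f_p)$ by a given node $t$, with a colour picked inside the least countable model of $\mathcal M_p$ that contains $t$. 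Then $f_G$ is the required specialising function.

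The crux, and where the hypotheses on $T$ and on the guessing models are genuinely used, is (ii)--(iii). Given a dense $D\in M^*$ and a condition $q\in D\cap M$ below the restriction of $p$ to $M$, the naive amalgamation $p\land q$ can fail to be a condition only through the finitely many nodes $t\in{\rm dom}(f_p)\setminus M$ with $f_p(t)\in M$: such a $t$ is pulled below $M$, and a clash arises if some $s\in{\rm dom}(f_q)$ with $f_q(s)=f_p(t)$ satisfies $s<_T t$. The remedy is to fix, for each such $t$, the place $\sigma(t)\in M$ where $b_t$ detaches from $M$ — a node, or a $T$-branch in $M$ when $O_M(t)$ is not guessed — and to work only with reflections $q$ that respect $\sigma$. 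To show such reflections are cofinal in $\mathcal P_{\omega_1}(T)$, hence available inside $M^*$, note that when $O_M(t)$ is not guessed the characteristic function of $b_M(t)$ is \emph{not} guessed in $M$; this is precisely the failure of guessing that the $\omega_1$-guessing property forbids for guessed objects, and a Baumgartner-style diagonalisation as in \cref{Baumgartner} thins the candidate family to a cofinal one on which $s\not<_T O_M(t)$ whenever $f_q(s)=f_p(t)$. Iterating over the finitely many bad $t$'s and using elementarity of $M^*$ to land inside it produces a reflection $q$ compatible with $p$. I expect this amalgamation-of-reflections step to be the main obstacle; everything else is careful but routine bookkeeping.
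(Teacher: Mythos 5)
Your proposal is correct and follows the paper's route exactly: the corollary is immediate from \cref{main theorem} together with \cref{fact Viale-Weiss} (that $\rm PFA$ implies ${\rm GM}^*(\omega_2)$), which is all the paper itself does here. Your further outline of the construction of $\mathbb P_T$ and of the support/reflection/Baumgartner machinery accurately reproduces the paper's proof of the main theorem, so nothing genuinely different is being done.
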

\begin{proof}[\nopunct]

\end{proof}

\bigskip
\footnotesize
\noindent\textit{Acknowledgements.}
The author's research was supported through the project M 3024 by the Austrian Science Fund (FWF). The author is grateful to M. Golshani and B. Veličković for the fruitful conversations about the contents of this paper. The author would like to extend his thanks to the referee for their careful reading and constructive comments that significantly improved the readability of this manuscript.

\normalsize
\baselineskip=17pt

\printbibliography[heading=bibintoc]

\end{document}